\numberwithin{equation}{section}
\theoremstyle{definition}
\newtheorem*{definition*}{Definition}
\newtheorem*{remark}{Remark}
\theoremstyle{plain}
\newtheorem{theorem-main}{Theorem}
\newtheorem{corollary-main}{Corollary}
\newtheorem{theorem}{Theorem}[section]
\newtheorem{lemma}[theorem]{Lemma}
\newtheorem{lem}[theorem]{Lemma}
\newtheorem{proposition}[theorem]{Proposition}
\newcommand{\thmref}[1]{Theorem~\ref{#1}}
\newcommand{\lemref}[1]{Lemma~\ref{#1}}
\newcommand{\corref}[1]{Corollary~\ref{#1}}
\newcommand{\va}{\ensuremath{\alpha}}
\newcommand{\vb}{\ensuremath{\beta}}
\newcommand{\bz}{\mathbb{Z}}
\newcommand{\bt}{\mathbb{T}}
\newcommand{\br}{\mathbb{R}}
\newcommand{\toinf}{\ensuremath{\rightarrow \infty}}
\newcommand{\norm}[1]{\left\lVert#1\right\rVert}
\newcommand{\seq}[1]{\left\{#1\right\}}
\newcommand{\ip}[2]{\langle #1, #2 \rangle}
\newcommand{\1}{\mathds{1}}
\newcommand{\sect}{\textsection}
\newcommand{\piped}{parallelepiped}
\renewcommand{\leq}{\leqslant}
\renewcommand{\geq}{\geqslant}
\renewcommand{\le}{\leqslant}
\DeclareMathOperator{\mes}{mes}
\DeclareMathOperator{\vol}{Vol}
\DeclareMathOperator{\spann}{span}
\DeclareMathOperator{\adj}{adj}
\newenvironment{enumerate-math}
{\begin{enumerate}
\addtolength{\itemsep}{5pt}
}
{\end{enumerate}}
\begin{document}
\title[Sets of bounded discrepancy]{Sets of bounded discrepancy for multi-dimensional irrational rotation}

\author{Sigrid Grepstad}
\address{Department of Mathematical Sciences, Norwegian University of Science and Technology (NTNU), NO-7491 Trondheim, Norway.}
\email{\texttt{sigrid.grepstad@math.ntnu.no}}
\author{Nir Lev}
\address{Department of Mathematics, Bar-Ilan University, Ramat-Gan 52900, Israel.}
\email{\texttt{levnir@math.biu.ac.il}}

\begin{abstract}
We study bounded remainder sets with respect to an irrational rotation of the $d$-dimensional torus.
The subject goes back to Hecke, Ostrowski and Kesten who characterized the intervals with
bounded remainder in dimension one.

First we extend to several dimensions the Hecke-Ostrowski result
by constructing a class of $d$-dimensional {\piped}s of bounded remainder.
Then we characterize  the Riemann measurable bounded remainder sets
in terms of ``equidecomposability'' to such a {\piped}.
By constructing invariants with respect to this equidecomposition, we derive explicit conditions for a polytope to be a
bounded remainder set.  In particular this yields a characterization of the convex bounded remainder polygons in two dimensions.
 The approach is used to obtain several other results as well.
\end{abstract}

\date{October 21, 2014}
\subjclass[2010]{11K38, 11J71, 52B45}
\thanks{\textit{Keywords.} Discrepancy, bounded remainder set, equidecomposability, scissors congruence}
\thanks{Research partially supported by the Israel Science Foundation grant No. 225/13}
\maketitle


\section{Introduction}\label{sec:intro}

\subsection{}\label{sec:intro1}
Let $\va = (\alpha_1, \ldots , \alpha_d)$ be a vector in $\br^d$, such that the numbers
 $1, \alpha_1, \alpha_2, \ldots, \alpha_d$ are linearly independent over the rationals. 
It is a classical fact that under this condition, the sequence $\seq{n\va}$ is equidistributed on 
the $d$-dimensional torus $\bt^d = \br^d / \bz^d$, meaning that 
\begin{equation}\label{equid_def}
\frac{1}{n} \sum_{k=0}^{n-1} \chi_S(x+k \va) \to \mes S \quad (n \to \infty)
\end{equation}
for any $x \in \bt^d$ and any Riemann measurable set $S \subset \bt^d$ (a set $S$ is Riemann measurable if its boundary has measure zero). Here, $\chi_S$ denotes the indicator function of $S$. 

The equidistribution of the sequence $\seq{n\va}$ with respect to the set $S$ 
can be measured quantitatively by means of the \emph{discrepancy function}, defined by
\begin{equation}
\label{eq:discrepancy}
D_n(S,x) = \sum_{k=0}^{n-1} \chi_S(x+k\va) - n \mes S .
\end{equation}
Thus \eqref{equid_def} can be reformulated by saying that $D_n(S,x) = o(n)$, $n \rightarrow \infty$.

It was discovered that for certain special sets $S$, a far better estimate for the discrepancy can be given. Hecke \cite{hecke}
and Ostrowski \cite{ostrowski2, ostrowski} showed that if $I \subset \bt$ is an interval with length in $\bz \va + \bz$ then
$D_n(I,x)$ remains bounded as $n \to \infty$. The converse to this result, conjectured by Erd\H{o}s and Sz\"{u}sz in \cite{erdos}, 
was confirmed by Kesten \cite{kesten}, who proved that the discrepancy $D_n(I,x)$ is bounded only if the length of $I$ is in $\bz \va + \bz$. 

The study of this phenomenon for sets more general than intervals, and in higher dimensions, has led to the notion of a \emph{bounded remainder set} (BRS). A measurable set $S$ is called a BRS
if there is a constant $C=C(S, \va)$, such that $|D_n(S,x)| \leq C$ for every $n$ and almost every $x$.
One can show that if $\sup_n |D_n(S,x)|<\infty$ for every $x$ in some set of positive measure,
or even just for one $x$ if $S$ is Riemann measurable, then $D_n(S,x)$ is actually bounded almost everywhere
with a uniform constant, which leads to the above definition of a bounded remainder set
(see Propositions \ref{prop:rmbrs1} and \ref{prop:rmbrs2} in \sect\ref{sec:basicprop}). 

Bounded remainder sets have been studied by Sz\"{u}sz \cite{szusz, szusz2},
Furstenberg, Keynes and Shapiro \cite{furstenberg}, Petersen \cite{petersen}, Hal\'{a}sz \cite{halasz}, Oren \cite{oren}, 
Rauzy \cite{rauzy1,rauzy3}, Liardet \cite{liardet}, Ferenczi \cite{ferenczi} and others,
see also \cite{haynes,shutov,zhuravlev}.
Some authors have also considered bounded remainder sets in $\br^d$,
identifying $S$ with its image under the canonical projection $\br^d \to \bt^d$.
In this context, the discussion was restricted to sets $S$ which are
\emph{simple}, meaning that the canonical projection restricted to $S$ is injective. 
Here we will extend the discussion to bounded sets $S$ in $\br^d$
which are not necessarily simple, by considering the projection of $S$
as a \emph{multiset} on $\bt^d$ with multiplicity function
\begin{equation}
\label{eq:chis}
 \chi_S(x) = \sum_{k \in \bz^d} \1_S(x+k),
\end{equation}
where $\1_S$ is the indicator function of $S$ in $\br^d$. The definition of a BRS can be easily extended to this setting by understanding $\chi_S$ in \eqref{eq:discrepancy} to be the multiplicity function defined by \eqref{eq:chis}.

\subsection{}\label{sec:intro1.2}
By the Hecke-Ostrowski-Kesten result mentioned above, an interval $I\subset \br$ 
is a BRS if and only if its length belongs to $\bz \va + \bz$. Hartman \cite{hartman} raised the question of whether this result admits a higher-dimensional analog. 

Non-trivial examples of bounded remainder sets (parallelograms) in two dimensions
were first given by Sz\"{u}sz \cite{szusz},
which include, for instance, the parallelogram spanned by the vectors
$(\alpha_1, \alpha_2)$ and $(\alpha_1 / \alpha_2, 0)$. 
An extension of this result for ``cylindric'' sets in higher 
dimensions was later obtained by Liardet \cite{liardet} (see \sect \ref{sec:basicprop} below). 

In the first of our main results in this paper we exhibit a new class of bounded remainder {\piped}s. 
\begin{theorem-main}
Any {\piped} in $\br^d$ spanned by vectors $v_1, \ldots , v_d$ belonging
to $\bz \va + \bz^d$ is a bounded remainder set.
\label{thm:main1}
\end{theorem-main}
By a {\piped} spanned by vectors $v_1, \ldots , v_d$ we mean a set of the form 
\begin{equation*}
P = \left\{  \sum_{k=1}^d t_k v_k \, : \, 0 \leq t_k < 1 \right\},  
\end{equation*}
where $v_1, \ldots , v_d$ are linearly independent vectors in $\br^d$. 

Theorem \ref{thm:main1} may be viewed as an extension to higher dimensions
of the Hecke-Ostrowski result on bounded remainder intervals in dimension one.
See \sect\ref{sec:main1}, where we also formulate a more general version of this result (\thmref{thm:genpiped}).

Since the union of two disjoint BRS is a BRS, it follows from Theorem \ref{thm:main1} that any polytope which can be tiled by a finite number of {\piped}s spanned by vectors in $\bz \va + \bz^d$ is a BRS. In particular, this implies:
\begin{corollary-main}
\label{cor:zonotope}
Any convex, centrally symmetric polygon in $\br^2$ with vertices belonging to $\bz \va + \bz^2$ is a bounded remainder set. More generally, any zonotope in $\br^d$ with vertices belonging to $\bz \va + \bz^d$ is a bounded remainder set.
\end{corollary-main}
Recall that a zonotope is a convex polytope which can be represented as the Minkowski sum of several line segments.
Equivalently, a zonotope is a convex, centrally symmetric polytope with centrally symmetric $k$-dimensional faces for $2 \leq k \leq d-1$. Thus in $\br^2$ the zonotopes are the convex, centrally symmetric polygons.

It is known that the measure $\gamma$ of any bounded remainder set $S$ must be of the form 
\begin{equation}
\gamma = n_0 + n_1 \alpha_1 + \cdots + n_d \alpha_d , 
\label{eq:mesmustbe}
\end{equation}
where $n_0, \ldots , n_d$ are integers. This is a generalization of Kesten's theorem, see Proposition
\ref{prop:mesgiven} in \sect\ref{sec:basicprop}. Conversely, any positive number $\gamma$ of the form \eqref{eq:mesmustbe} may be realized as the measure of some bounded remainder set \cite{halasz}. It is a useful fact, as we will see below, that such a set may be chosen to be a {\piped} of the form in Theorem \ref{thm:main1}.
\begin{corollary-main}
\label{cor:main1anymes}
Let $\gamma$ be a positive number of the form \eqref{eq:mesmustbe}. Then there exists a bounded remainder {\piped} $P$,
spanned by vectors belonging to $\bz \va + \bz^d$, with $\mes P = \gamma$.
\end{corollary-main}

We also show that if, moreover, $\gamma \leq 1$, then $P$ may be chosen to be a simple set.

\subsection{} \label{sec:intro:main2}
Two measurable sets $S$ and $S'$ in $\br^d$ are said to be \emph{equidecomposable}, or \emph{scissors congruent}, if the set $S$ can be partitioned 
into finitely many measurable subsets that can be reassembled by rigid motions to form, up to measure zero, a partition of $S'$.
If $S$ and $S'$ belong to a restricted class of sets, e.g.\ they are Riemann measurable sets, or they are polytopes, then we shall
require the pieces of the partition to belong to the same class.

Equidecomposability of polytopes has received much attention, as it goes back to Hilbert's third problem -- the question of whether two 
polyhedra of equal volume are necessarily equidecomposable (by polyhedral pieces). 
In two dimensions any two polygons of equal area are equidecomposable,
but in three dimensions it was shown by Dehn that such a result is no longer true
(see \cite{boltianski} for a detailed exposition of the subject).

It is also interesting to consider a restricted notion of equidecomposability, where the pieces of the partition 
are allowed to be reassembled only by motions belonging to some given subgroup $G$ of all rigid motions.
In this context, the most well studied case is the equidecomposability with respect to the group 
of all translations of $\br^d$.

It is not difficult to show that if two sets $S$ and $S'$ are equidecomposable using only translations by vectors in $\bz \va + \bz^d$,
and if $S$ is a bounded remainder set, then so is $S'$ (see Proposition \ref{prop:shiftsubset} in \sect\ref{sec:main2}).
Our second main result establishes a converse statement in the case when the two sets are
Riemann measurable.

\begin{theorem-main}
\label{thm:main2}
Let $S$ and $S'$ be two Riemann measurable bounded remainder sets of the same measure.
Then $S$ and $S'$ are equidecomposable (by Riemann measurable pieces)
using translations by vectors belonging to $\bz \va + \bz^d$ only.
\end{theorem-main} 

In the case when $S,S'$ are polytopes, our proof yields an equidecomposition 
using pieces which are polytopes as well.

A combination of Theorems \ref{thm:main1}, \ref{thm:main2} and Corollary \ref{cor:main1anymes}
allows us to obtain a characterization of the Riemann measurable bounded remainder sets:
\begin{corollary-main}
\label{cor:equidecomp}
A Riemann measurable set $S$ in $\br^d$ is a bounded remainder set if and only if it is equidecomposable 
to some {\piped} spanned by vectors in $\bz \va + \bz^d$, using translations by vectors belonging to $\bz \va + \bz^d$.
\end{corollary-main}

This characterization, in turn, enables us to prove several other results, which will be described next.
See also \cite{ferenczi} where a different characterization of bounded remainder sets is proposed.

\subsection{}
A key idea in the study of equidecomposability of polytopes with respect to a group of motions $G$, is the concept
of \emph{additive $G$-invariants}. A function $\varphi$ defined on the set of all polytopes in $\mathbb R^d$ is said to be an 
additive $G$-invariant if (i) it is additive, namely if $S_1, S_2$ are two polytopes with disjoint interiors then
$\varphi(S_1 \cup S_2) = \varphi(S_1) + \varphi(S_2)$; and (ii) it is invariant under motions of the
group $G$, that is $\varphi(S) = \varphi(g(S))$ whenever $S$ is a polytope and $g \in G$.
It is clear that a necessary condition for two polytopes $S$ and $S'$ to be $G$-equidecomposable 
is that $\varphi(S) = \varphi(S')$ for any additive $G$-invariant $\varphi$. A general problem is to construct a
``complete set'' of additive $G$-invariants, namely invariants which together provide a necessary and
sufficient condition for two polytopes of the same volume to be $G$-equidecomposable.

In his solution to Hilbert's third problem, Dehn found additive invariants with respect to the group of all 
rigid motions, which allowed him to show that a cube and a regular tetrahedron in $\br^3$
are not equidecomposable. Dehn invariants for polytopes in $\br^d$ have also been studied, and
shown to form a complete set in dimension $d=3,4$ \cite{jessen2, sydler}, but it remains
an open problem to explicitly describe invariants which give a necessary and
sufficient condition for equidecomposability in dimension $d \geq 5$.

Additive invariants with respect to the group of all translations of $\br^d$ were introduced by Hadwiger 
and proved to form a complete set in any dimension \cite{hadwiger, glur, jessen, sah}.

By constructing Hadwiger-type invariants with respect to
the group of translations by vectors in $\bz \va + \bz^d$ (a countable,
dense subgroup of all the translations), we can derive from
Corollary \ref{cor:equidecomp} explicit conditions for a polytope $S$ to
be a bounded remainder set. This applies already in dimension one, where it yields the
characterization due to Oren \cite{oren} of the finite unions of intervals with bounded remainder:

\emph{Let $S \subset \br$ be the union of $N$ disjoint intervals $[ a_j , b_j ]$, $1 \leq j \leq N$. Then $S$ is a bounded remainder set if and only if there exists a permutation $\sigma$ of $\{1, \ldots , N\}$ such that $b_{\sigma (j)} - a_j \in \bz \va + \bz$ for each $1 \leq j \leq N$.}

 In two dimensions we obtain the following characterization of the convex polygons with bounded remainder:

\begin{theorem-main}
\label{thm:cvxpg}
Let $S$ be a convex polygon in $\br^2$. Then $S$ is a bounded remainder set if and only if
it is centrally symmetric, and for each pair of parallel edges $e$ and $e'$ 
the following two conditions are satisfied:
\begin{enumerate-math}
\item
\label{item:facets1}
There are two points, one on the edge $e$ and the other on the edge $e'$,
which differ by a vector in $\bz \va + \bz^2$;
\item
\label{item:facets2}
If the midpoints of the edges $e$ and $e'$ do not differ by a vector in $\bz \va + \bz^2$,
then the vectors $e, e'$ themselves belong to $\bz \va + \bz^2$.
\end{enumerate-math}
\end{theorem-main}

Notice that conditions \ref{item:facets1} and \ref{item:facets2} are satisfied if the vertices of $S$ lie in $\bz \va + \bz^2$.
See also \cite{bolle}, where a similar characterization is given of the convex polygons $S$
for which the function $\chi_S$ is constant a.e.\ (in particular, such a polygon is a BRS).

In higher dimensions we also get non-trivial conditions for a polytope $S$ to be a bounded remainder set. For example:

\begin{theorem-main}
\label{thm:cvxptpsym}
For a convex polytope $S$ in $\br^d$ to be a bounded remainder set,
it is necessary that $S$ is centrally symmetric and has centrally symmetric $(d-1)$-dimensional faces.
\end{theorem-main}

Recall that we also gave a sufficient condition for a convex polytope to be a BRS,
namely that it is a zonotope with vertices belonging to $\bz \va + \bz^d$ (\corref{cor:zonotope}).
In three dimensions, we thus obtain the following

\begin{corollary-main}
\label{cor:zonohedron}
Let $S$ be a convex polyhedron in $\br^3$ with vertices belonging to $\bz \va + \bz^3$.
Then $S$ is a bounded remainder set if and only if it is a zonohedron, namely
$S$ is centrally symmetric and has centrally symmetric faces.
\end{corollary-main}

As another application to our approach, we show how to derive a result due to
Liardet \cite{liardet} that characterizes the bounded remainder
multi-dimensional rectangles with sides parallel to the coordinate axes:

\emph{If $S \subset \br^d$ is the product of $d$ intervals $I_1 \times \cdots \times I_d$
then $S$ is a bounded remainder set if and only if
the length of one of the intervals $I_j$ belongs to $\bz \va_j + \bz$,
while the lengths of all the other intervals belong to $\bz$.}

The above results merely illustrate how the characterization of bounded remainder polytopes 
in terms of equidecomposability may be used in some special cases of interest.
The general conditions, formulated in terms of Hadwiger-type invariants with respect to
the group of translations by vectors in $\bz \va + \bz^d$, are presented in \sect \ref{sec:hadwiger}.

\subsection{}
We continue with some further applications to the characterization of Riemann measurable bounded remainder sets.

Let us indicate the property that the numbers $1, \alpha_1, \alpha_2, \ldots, \alpha_d$ 
are linearly independent over the rationals by saying that $\va$ is an \emph{irrational vector}. 
Given two irrational vectors $\va$ and $\vb$ in $\br^d$, one may attempt to find relations between the bounded remainder sets corresponding to the vector $\va$ and those corresponding to the vector $\vb$.

For example, it is known that the condition $\va \in \bz \vb + \bz^d$ implies that
any BRS with respect to $\va$ is also a BRS with respect to $\vb$
(see Proposition \ref{prop:bversusa} in \sect\ref{sec:basicprop}).

We consider a more general situation of an invertible linear map $T$ 
on $\br^d$, which maps bounded remainder 
sets with respect to $\va$ to bounded remainder sets with respect to $\vb$.
The following result describes completely the linear maps satisfying
this condition with respect to Riemann measurable bounded remainder sets.

\begin{theorem-main}
\label{thm:main3}
Let $\va$ and $\vb$ be two irrational vectors in $\br^d$, and let $T$ be an invertible linear map on $\br^d$. 
In order for the image under $T$ of every Riemann measurable BRS with respect to $\va$
to be a BRS with respect to $\vb$, it is necessary and sufficient that
\begin{equation}
\label{eq:tiff}
T(\bz \va + \bz^d) \subset \bz \vb + \bz^d.
\end{equation}
\end{theorem-main}

In particular, using this for the identity map allows us to characterize the situation when 
every BRS with respect to $\va$ is also a BRS with respect to $\vb$:

\begin{corollary-main}
\label{cor:brsab}
Let $\va$ and $\vb$ be two irrational vectors in $\br^d$. In order for 
every bounded remainder set with respect to $\va$ to be 
a bounded remainder set also with respect to $\vb$, it is necessary and sufficient that
$\va \in \bz \vb + \bz^d$.
\end{corollary-main}

See \sect\ref{sec:linear}, where we also give a parametrization of all the couples $(\vb,T)$ satisfying condition
\eqref{eq:tiff} for a given irrational vector $\va$ (\thmref{thm7.1}), and prove a version of \thmref{thm:main3} for not
necessarily Riemann measurable bounded remainder sets (\thmref{thm7.2}).

\subsection{}
Another result is concerned with the notion of the \emph{transfer function} of a bounded remainder set.
It is well-known that a measurable set $S$ is a BRS if and only if there exists a bounded, measurable function $g$ on the torus $\bt^d$ such that
\begin{equation}
\chi_S(x) - \mes S = g(x) - g(x-\va) \quad \text{a.e.}
\label{eq:cobound}
\end{equation} 
This equation is known as the \emph{cohomological equation} for $S$ (sometimes this equation is considered with $g(x+\alpha)$ instead of $g(x-\alpha)$ on the right hand side, but it is easy to change from one convention to the other). The function $g$ is unique a.e.\ up to an additive constant, and is called the \emph{transfer function} for $S$.

Our proof of Theorem \ref{thm:main1} is based on the explicit construction of a bounded transfer function
for a {\piped} spanned by vectors in $\bz \va + \bz^d$. Theorem \ref{thm:main2}, in turn, is proved by
the explicit construction of an equidecomposition of any two Riemann measurable bounded remainder sets.
These results thus essentially enable us to determine the transfer function $g$ of a Riemann measurable BRS explicitly. 
In particular, we obtain:

\begin{theorem-main}
If $S \subset \br^d$ is a Riemann measurable bounded remainder set, then it has a Riemann integrable transfer function.
\label{thm:Rtransfer}
\end{theorem-main}

\subsection{}
The rest of the paper is organized as follows. In \sect \ref{sec:basicprop} we survey the basic results on bounded remainder sets.
In \sect \ref{sec:main1} we study {\piped}s of bounded remainder, and in particular 
prove that any {\piped} spanned by vectors in $\bz \va + \bz^d$ is a bounded remainder set.
In \sect \ref{sec:main2} we characterize the Riemann measurable bounded remainder 
sets in terms of equidecomposability with respect to translations by vectors in $\bz \va + \bz^d$.
In \sect \ref{sec:hadwiger} we study bounded remainder polytopes using Hadwiger-type invariants.
In \sect \ref{sec:linear} we describe linear maps on $\br^d$ which map bounded remainder 
sets with respect to $\va$ to bounded remainder sets with respect to another irrational vector $\vb$.
In the last \sect\ref{sec:remarks} we give additional remarks and mention some open problems.

\subsection*{Acknowledgement}
We are grateful to Lev Buhovski, Leonid Polterovich and Barak Weiss for their help in various respects related to this work.


\section{Preliminaries}\label{sec:basicprop}

\subsection{Notation and terminology}\label{sec:terminology}
A vector $\va = (\alpha_1, \alpha_2, \ldots , \alpha_d) \in \br^d$ will be called an \emph{irrational vector} if the numbers $1, \alpha_1, \ldots , \alpha_d$ are linearly independent over the rationals. Thus $\va$ is irrational if and only if the points $\{ n \va\}$ are dense on the torus $\bt^d = \br^d / \bz^d$. 

For a bounded, measurable set $S \subset \br^d$ we denote by 
\begin{equation*}
\chi_S(x) = \sum_{k \in \bz^d} \1_S (x+k)
\end{equation*}
the multiplicity function of the projection of $S$ on $\bt^d$. Since $\chi_S$ is $\bz^d$-periodic, we shall consider it as a function on $\bt^d$. We say that $S$ is a \emph{simple set} if $\chi_S$ is $\{0,1\}$-valued. 

We say that $S$ is a \emph{bounded remainder set} (BRS) if there is a constant $C= C(S, \va)$ such that
\begin{equation}
\label{eq:termdisc}
\Big| \sum_{k=0}^{n-1} \chi_S(x + k\va) - n \mes S \Big| \leq C
 \quad (n=1,2,3,\dots) 
 \quad \text{a.e.\ $x \in \bt^d$.} 
\end{equation}

A measurable function $g$ on $\bt^d$ will be called a \emph{transfer function} for $S$ if 
\begin{equation}
\label{eq:termcobound}
\chi_S(x) - \mes S = g(x) - g(x-\alpha) \quad \text{a.e.}
\end{equation}

We denote by $\widehat{f} (n)$, $n \in \bz^d$, the Fourier coefficients of a function $f$ on $\bt^d$. It is easy to check that for a bounded set $S \subset \br^d$ we have
\begin{equation}
\label{eq:chift}
\widehat{\chi}_S (n) = \int_S e^{-2\pi i \ip{n}{x}} dx,
\end{equation}
and that the cohomological equation \eqref{eq:termcobound} is equivalent to the condition
\begin{equation}
\label{eq:termcoboundft}
\widehat{g}(n) = \frac{\widehat{\chi}_S (n)}{1 - e^{-2 \pi i \ip{n}{\alpha}}}, \quad n \in \bz^d \setminus \{0\}.
\end{equation}

A bounded set $S \subset \br^d$ is called \emph{Riemann measurable} if its boundary has measure zero, or equivalently, if its indicator function $\1_S$ is a Riemann integrable function. 

\subsection{Basic facts on bounded remainder sets}\label{sec:bp}
Here we survey the basic results on bounded remainder sets.
These results are basically well-known, but in order to make the exposition
complete and self-contained we have included short proofs of them. 

We start with two propositions clarifying why the definition of a BRS is indeed a natural one. The proofs essentially
follow an argument of Petersen \cite{petersen} (see also \cite{halasz}).

\begin{proposition}\label{prop:rmbrs1}
Let $S \subset \br^d$ be a bounded, measurable set, and suppose that for each $x$ in some set of positive measure we have
\begin{equation}
\sup_{n} \left| \sum_{k=0}^{n-1} \chi_S(x+ k \va) - n \mes S \right| < \infty .
\label{eq:messet}
\end{equation}
Then $S$ is a BRS. \label{item:anyset}
\end{proposition}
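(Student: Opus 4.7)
I would set $f(x) = \chi_S(x) - \mes S$ and $S_n(x) = \sum_{k=0}^{n-1} f(x+k\va)$, so the hypothesis is that $\sup_n |S_n(x)| < \infty$ on a set $E$ of positive measure, and the goal is to produce a uniform constant $C$ with $|S_n(x)| \leq C$ for every $n$ and a.e. $x$. The plan is to upgrade the pointwise finiteness to a uniform bound by exploiting the ergodicity of the rotation $T_\va x = x + \va$ on $\bt^d$ (valid by irrationality of $\va$) together with the two elementary cocycle identities
\begin{equation*}
S_n(x+\va) = S_{n+1}(x) - f(x), \qquad S_m(x) = S_{n+m}(x-n\va) - S_n(x-n\va),
\end{equation*}
the second of which decomposes $S_m(x)$ into the difference of two partial sums based at $x - n\va$.

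First I would observe that $E$ is $T_\va$-invariant: the first cocycle identity together with $\|f\|_\infty < \infty$ (which holds because $S$ is bounded in $\br^d$, so $\chi_S$ is uniformly bounded by the number of integer translates of $S$ that can meet any given point) yields $\sup_n |S_n(x+\va)| \leq \sup_n |S_n(x)| + \|f\|_\infty$ and the analogous reverse inequality. Since $T_\va$ is ergodic and $\mes E > 0$, we conclude $\mes E = 1$. Writing $E_M = \{x : \sup_n |S_n(x)| \leq M\}$, we then have $E = \bigcup_M E_M$ and so can pick $M$ with $\mes E_M > 0$.

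Next I would consider the one-sided forward orbit $U = \bigcup_{n \geq 0} (E_M + n\va)$. Clearly $T_\va U \subseteq U$, and since $T_\va$ preserves Lebesgue measure, we get $\mes(U \setminus T_\va U) = 0$; hence $U$ is $T_\va$-invariant modulo nullsets, and ergodicity together with $\mes E_M > 0$ forces $\mes U = 1$. Thus for a.e. $x$ there exists some $n = n(x) \geq 0$ with $x - n\va \in E_M$, and applying the second cocycle identity together with the bound $|S_k(x - n\va)| \leq M$ (valid for \emph{every} $k \geq 0$ since $x - n\va \in E_M$) immediately gives $|S_m(x)| \leq 2M$ for all $m \geq 0$, which is the BRS property with constant $2M$.

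The main delicate point I anticipate is the one-sidedness in Step 3: I need $x - n\va$ to land in $E_M$ for some \emph{nonnegative} $n$, because only then can I apply the pointwise bound to forward partial sums starting at $x - n\va$. The fact that the forward orbit $U$ is automatically $T_\va$-invariant modulo nullsets, simply because $T_\va$ is measure-preserving and $T_\va U \subset U$ forces equality of measures, is the key observation that makes this work without any extra recurrence argument. (A naive attempt via Poincaré recurrence and first-return times would only give a pointwise finite but not uniformly bounded estimate.)
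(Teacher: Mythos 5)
Your proof is correct and is essentially the paper's argument: the same cocycle identity $S_m(x) = S_{n+m}(x-n\va) - S_n(x-n\va)$, applied with a base point in a positive-measure set $E_M$ where $\sup_n |S_n| \leq M$, gives the uniform bound $2M$ on the union of forward translates of $E_M$. The only difference is that you justify in detail, via measure preservation and ergodicity of the rotation, why $\bigcup_{n \geq 0}(E_M + n\va)$ has full measure, a point the paper states without elaboration.
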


\begin{proposition}\label{prop:rmbrs2}
Let $S \subset \br^d$ be a bounded, Riemann measurable set, and suppose that
there exists at least one $x$ satisfying \eqref{eq:messet}. Then $S$ is a BRS. \label{item:Rset}
\end{proposition}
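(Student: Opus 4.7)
My plan is to upgrade the boundedness of the Birkhoff sums at the single point $x_0$ to a uniform almost-everywhere bound on $\bt^d$. Once this is achieved, $S$ is a BRS directly by definition (and in any case Proposition~\ref{prop:rmbrs1} would apply). The mechanism is to sandwich $\chi_S$ between its upper and lower semicontinuous envelopes
\[
\chi_S^*(x) = \limsup_{y \to x} \chi_S(y), \qquad \chi_{S,*}(x) = \liminf_{y \to x} \chi_S(y),
\]
and exploit the fact that the discontinuity set $N$ of $\chi_S$, being the image in $\bt^d$ of $\partial S$, is null by Riemann measurability. Consequently $\chi_{S,*}, \chi_S, \chi_S^*$ coincide off $N$, and in particular both envelopes integrate to $\mes S$. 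Setting
\[
D_n^*(x) = \sum_{k=0}^{n-1}\chi_S^*(x + k\va) - n\,\mes S, \qquad D_{n,*}(x) = \sum_{k=0}^{n-1}\chi_{S,*}(x + k\va) - n\,\mes S,
\]
the first is u.s.c.\ and the second is l.s.c.\ (each a finite sum of such functions), and one has the sandwich $D_{n,*}(x) \leq D_n(S,x) \leq D_n^*(x)$ everywhere.

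The cocycle identity $D_n(S, x_0 + m\va) = D_{n+m}(S, x_0) - D_m(S, x_0)$ will then promote the pointwise hypothesis to $|D_n(S, y)| \leq 2C$ for every $y$ in the forward orbit $\mathcal{O} = \{x_0 + m\va : m \geq 0\}$ and every $n$. Via the sandwich this gives $D_n^*(y) \geq -2C$ and $D_{n,*}(y) \leq 2C$ for $y \in \mathcal{O}$. Now, for each $n$, the superlevel set $\{D_n^* \geq -2C\}$ is closed by upper semicontinuity and the sublevel set $\{D_{n,*} \leq 2C\}$ is closed by lower semicontinuity, so the countable intersection
\[
A = \bigcap_n \{x \in \bt^d : D_n^*(x) \geq -2C \text{ and } D_{n,*}(x) \leq 2C\}
\]
is closed. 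Since $\va$ is irrational, $\mathcal{O}$ is dense in $\bt^d$, and hence $A = \bt^d$: the one-sided bounds $D_n^*(x) \geq -2C$ and $D_{n,*}(x) \leq 2C$ hold for every $x$ and every $n$.

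To collapse these one-sided bounds into a two-sided bound on $D_n(S,x)$, I would observe that $\bigcup_{k \geq 0}(N - k\va)$ is a countable union of null sets, so for almost every $x$ the orbit $\{x + k\va : k \geq 0\}$ avoids $N$. For any such $x$ the envelopes coincide with $\chi_S$ along the orbit, whence $D_n(S, x) = D_n^*(x) = D_{n,*}(x)$ for every $n$, and the global inequalities then force $|D_n(S, x)| \leq 2C$. This is exactly the BRS condition with constant $2C$.

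The step I expect to be the crux is the pairing of semicontinuity (closed level sets in the right direction) with density of the orbit (extending a pointwise bound to all of $\bt^d$). This is precisely where Riemann measurability is used in an essential way: for a merely measurable $S$, the envelopes could differ from $\chi_S$ on a positive-measure set, and the transfer of the bound from $\mathcal{O}$ to $\bt^d$ would break down.
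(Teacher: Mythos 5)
Your argument is correct and follows essentially the same route as the paper: the cocycle identity transfers the bound at $x_0$ to the dense orbit $\{x_0+j\va\}$, and Riemann measurability (almost everywhere continuity of $\chi_S$, hence of the Birkhoff sums) propagates the bound to almost every point of $\bt^d$. Your semicontinuous-envelope bookkeeping is just a more elaborate implementation of the paper's one-line observation that each sum $S_n$ is continuous almost everywhere, so the bound on the dense orbit extends to every continuity point.
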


\begin{proof}[Proof of Proposition \ref{prop:rmbrs1}]
Let $f(x) = \chi_S(x) - \mes S$, and denote 
\begin{equation}
S_n(x) = \sum_{k=0}^{n-1} f(x+k\va) .
\label{eq:Sn}
\end{equation}
It is given that $\sup_n |S_n(x)| < \infty$ for all $x$ in some set $E$ of positive measure in $\bt^d$. Then there is a
constant $M$ such that $\sup_n |S_n(x)| \leq M$ in some subset $E'\subset E$ of positive measure. Since
\begin{equation}
|S_n(x+j\va)| = |S_{n+j}(x)-S_j(x)| \leq 2M
\label{eq:2M}
\end{equation}
for $x \in E'$, it follows that $|S_n| \leq 2M$ on the union $\bigcup_{j=1}^{\infty} (E'+j\va)$, 
a set of full measure in $\bt^d$. Hence $S$ is a BRS.
\end{proof}

\begin{proof}[Proof of Proposition \ref{prop:rmbrs2}]
With the same notations, we are given that there is a point $x_0$ 
such that $M := \sup_n |S_n(x_0)| < \infty$. By \eqref{eq:2M} we obtain $|S_n| \leq 2M$ on the set $\{ x_0 + j\va \}$, a dense subset of $\bt^d$. Since $S$ is Riemann measurable, $S_n$ is continuous at almost every point. It follows that $|S_n| \leq 2M$ a.e., hence $S$ is a BRS. 
\end{proof}
 
We proceed with the equivalence of the bounded remainder property and the existence of a bounded transfer function. 

\begin{proposition}
\label{prop:trnsequiv}
For a bounded, measurable set $S \subset \br^d$, the following are equivalent:
\begin{enumerate-math}
\item \label{item:brs} $S$ is a bounded remainder set. 
\item \label{item:transfer} There exists a (real-valued) bounded, measurable function $g$ on $\bt^d$ satisfying the cohomological equation \eqref{eq:termcobound}. 
\end{enumerate-math}
\end{proposition}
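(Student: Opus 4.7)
\emph{Plan for the proof of Proposition~\ref{prop:trnsequiv}.}

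The direction \ref{item:transfer}$\Rightarrow$\ref{item:brs} is immediate by telescoping. If $g$ is bounded measurable and satisfies \eqref{eq:termcobound}, then setting $f(x) = \chi_S(x) - \mes S$ one has
\[
\sum_{k=0}^{n-1} f(x + k\va) \;=\; \sum_{k=0}^{n-1}\bigl[g(x+k\va) - g(x+(k-1)\va)\bigr] \;=\; g(x+(n-1)\va) - g(x-\va),
\]
which is bounded a.e.\ by $2\norm{g}_\infty$ uniformly in $n$. So $S$ is a BRS.

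For the converse \ref{item:brs}$\Rightarrow$\ref{item:transfer}, the plan is to extract $g$ as a weak-$*$ accumulation point of Ces\`aro averages of the Birkhoff sums. Let $S_n(x) = \sum_{k=0}^{n-1} f(x+k\va)$; by hypothesis $\seq{S_n}$ is uniformly bounded in $L^\infty(\bt^d)$, say $\norm{S_n}_\infty \leq C$ for all $n$. Define
\[
g_N(x) \;=\; -\frac{1}{N}\sum_{n=0}^{N-1} S_n(x+\va),
\]
which is a real-valued measurable function on $\bt^d$ with $\norm{g_N}_\infty \leq C$. By Banach--Alaoglu, there is a subsequence $g_{N_j}$ converging in the weak-$*$ topology of $L^\infty(\bt^d)$ to some bounded measurable $g$, with $\norm{g}_\infty \leq C$.

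A short telescoping computation, using $S_n(x+\va) - S_n(x) = f(x+n\va) - f(x)$, gives the key identity
\[
g_N(x) - g_N(x-\va) \;=\; f(x) - \frac{S_N(x)}{N}
\]
pointwise a.e., where the error term $S_N/N$ converges to $0$ uniformly in $x$. I pair both sides against an arbitrary $\varphi \in L^1(\bt^d)$: the left-hand side rewrites as $\int[\varphi(x) - \varphi(x+\va)] g_{N_j}(x)\,dx$ and so converges to $\int[\varphi(x)-\varphi(x+\va)] g(x)\,dx = \int \varphi(x)[g(x) - g(x-\va)]\,dx$, while the right-hand side converges in $L^\infty$ norm to $f$. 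Thus $\int \varphi\cdot [g(x)-g(x-\va)]\,dx = \int \varphi\cdot f(x)\,dx$ for every $\varphi \in L^1(\bt^d)$, and so $g$ satisfies the cohomological equation \eqref{eq:termcobound} a.e.

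The main obstacle is to convert weak-$*$ convergence of the sequence $\seq{g_N}$ into a genuine a.e.\ identity between $g$ and its $\va$-shift; this is precisely what the explicit identity above accomplishes, by reducing the passage to the limit to a single integration against test functions and by making the boundary contribution vanish uniformly. I also note that working on $\bt^d$ automatically delivers measurability of the limit, sidestepping the measurability issues one would face if constructing $g$ pointwise through, e.g., a Banach limit of $\seq{-S_n(x+\va)}$.
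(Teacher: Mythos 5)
Your proof is correct, but it takes a different route from the paper in the harder direction \ref{item:brs}$\Rightarrow$\ref{item:transfer}. The paper's argument is purely pointwise: it sets $h(x):=\liminf_{n\to\infty}S_n(x)$, which is bounded and measurable because $|S_n|\le C$ a.e., and then takes $\liminf$ in the recursion $S_n(x+\va)=S_{n+1}(x)-f(x)$ to get $h(x+\va)=h(x)-f(x)$ a.e., so $g(x)=-h(x+\va)$ solves \eqref{eq:termcobound}; this is two lines and needs no functional analysis. You instead take Ces\`aro averages $g_N(x)=-\tfrac1N\sum_{n=0}^{N-1}S_n(x+\va)$, extract a weak-$*$ limit in $L^\infty(\bt^d)$ (using separability of $L^1$ for sequential extraction), and pass to the limit in the exact identity $g_N(x)-g_N(x-\va)=f(x)-S_N(x)/N$ by testing against $L^1$ functions; all steps check out, including the telescoping identity and the translation-invariance manipulation, and the easy direction is the same telescoping as in the paper (modulo the substitution $g(x)=-h(x+\va)$). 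What each approach buys: the $\liminf$ construction is more elementary, yields an explicit formula for the transfer function, and already handles measurability (so your closing remark about needing weak-$*$ limits or Banach limits to sidestep measurability is not quite apt -- $\liminf$ of measurable functions is measurable); your Ces\`aro/weak-$*$ argument is heavier but more robust, since it would survive under weaker hypotheses (e.g.\ only norm bounds on the Birkhoff sums rather than a pointwise recursion one can pass to the limit in) and is the standard template for coboundary results in more general settings. One cosmetic point: the error term $S_N/N$ tends to $0$ in $L^\infty$ norm (i.e.\ uniformly off a null set), not literally uniformly in every $x$, but you use it only through the $L^1$--$L^\infty$ pairing, so nothing is affected.
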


\begin{proof}
Let $f(x) = \chi_S(x) - \mes S$.
The cohomological equation \eqref{eq:termcobound} is equivalent to 
\begin{equation}
\label{eq:termcoboundclassical}
f(x) = h(x) - h(x+\va) \quad \text{a.e.,}
\end{equation}
where $g$ and $h$ are related by $g(x) = -h(x+\va)$.
Suppose first that there exists a bounded function $h$ satisfying \eqref{eq:termcoboundclassical}.
Then the sum $S_n$ defined by \eqref{eq:Sn} becomes
$S_n(x) = h(x) - h(x+ n\va)$, and so $|S_n(x)| \leq 2 \norm{h}_{\infty}$ a.e.
Hence $S$ is a BRS.

Conversely, assume that $S$ is a BRS. Then $|S_n(x)| \leq C$ a.e., so the function
\begin{equation*}
h(x) := \liminf_{n \toinf} S_n(x)
\end{equation*}
belongs to $L^{\infty}(\bt^d)$. Taking the $\liminf$ as $n \toinf$ of both sides in the equality 
\[
S_n(x+\va) = S_{n+1}(x) - f(x)
\]
yields
$h(x+\va) = h(x)-f(x)$ a.e., so we obtain the required condition \eqref{eq:termcoboundclassical}.
\end{proof}

\begin{remark}
It is easy to see that if $S$ is a BRS then the smallest constant $C = C(S,\alpha)$ satisfying \eqref{eq:termdisc} is
$C = \operatorname{ess \, sup} g - \operatorname{ess \, inf} g$, where $g$ is the
transfer function of $S$.
\end{remark}

The next result imposes an arithmetical restriction on the measure of a bounded remainder set $S$.
In the case when $S$ is an interval on $\br$, this is Kesten's theorem \cite{kesten}.
The proof given is based on an argument due to Furstenberg, Keynes and Shapiro \cite{furstenberg} and Petersen \cite{petersen} (see also \cite{halasz}).

\begin{proposition}
\label{prop:mesgiven}
Let $S$ be a BRS. Then there exist integers $n_0, n_1, \ldots , n_d$ such that 
\begin{equation}
\mes S = n_0 + n_1 \alpha_1 + \cdots + n_d \alpha_d. 
\label{eq:mesgiven}
\end{equation}
\end{proposition}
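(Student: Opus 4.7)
The plan is to exploit the cohomological characterization from Proposition~\ref{prop:trnsequiv}, combined with the integrality of $\chi_S$ and Fourier analysis on $\bt^d$. Since $S$ is a BRS, there exists a bounded measurable real-valued function $g$ on $\bt^d$ with
\begin{equation*}
\chi_{S}(x) - \mes S = g(x) - g(x-\va) \quad \text{a.e.}
\end{equation*}
The crucial observation is that $\chi_S(x)$ takes integer values, so that upon exponentiating the cohomological equation the term $\chi_S(x)$ disappears.

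Concretely, I would set $G(x) := e^{2\pi i g(x)}$, which is a bounded measurable (unimodular) function on $\bt^d$. Substituting the equation above gives
\begin{equation*}
G(x-\va) = e^{2\pi i g(x)} \cdot e^{-2\pi i \chi_S(x)} \cdot e^{2\pi i \mes S} = e^{2\pi i \mes S} \, G(x) \quad \text{a.e.}
\end{equation*}
In other words, $G$ is an almost-everywhere eigenfunction of the Koopman operator of the rotation by $-\va$, with eigenvalue $e^{2\pi i \mes S}$.

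Next, I would expand $G$ as a Fourier series $G = \sum_{n \in \bz^d} \widehat{G}(n) e_n$, where $e_n(x) = e^{2\pi i \ip{n}{x}}$. Matching Fourier coefficients in the functional equation yields
\begin{equation*}
\widehat{G}(n)\bigl(e^{-2\pi i \ip{n}{\va}} - e^{2\pi i \mes S}\bigr) = 0, \quad n \in \bz^d.
\end{equation*}
Since $|G| \equiv 1$, $G$ is not identically zero, hence at least one coefficient $\widehat{G}(n)$ is nonzero. For this $n$ we must have $e^{-2\pi i \ip{n}{\va}} = e^{2\pi i \mes S}$, which means $\mes S + \ip{n}{\va} \in \bz$. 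Writing this integer as $n_0$ and setting $(n_1, \ldots, n_d) = -n$, we obtain the desired identity $\mes S = n_0 + n_1 \alpha_1 + \cdots + n_d \alpha_d$.

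The only slightly delicate point is ensuring that $G$ is genuinely a single eigenfunction of the rotation rather than a sum of contributions from several eigenvalues; this is handled automatically by the irrationality of $\va$, which forces the values $e^{-2\pi i \ip{n}{\va}}$ to be pairwise distinct, so that the Fourier coefficient equation above selects at most one $n$. In effect the proof reduces the arithmetic statement \eqref{eq:mesgiven} to the fact that the eigenvalues of the Koopman operator of an irrational rotation on $\bt^d$ are precisely $\{e^{-2\pi i \ip{n}{\va}} : n \in \bz^d\}$, a simple consequence of the rotation's pure point spectrum.
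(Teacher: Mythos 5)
Your proof is correct and is essentially the paper's own argument: exponentiate the bounded transfer function to obtain $e^{2\pi i g(x)}$ as an eigenfunction of the rotation with eigenvalue $e^{2\pi i \mes S}$, then use that all eigenvalues are of the form $e^{2\pi i \ip{n}{\va}}$. The only difference is that you verify this last classical fact directly by matching Fourier coefficients (which is fine, and makes the final remark about ``a single eigenfunction'' unnecessary), whereas the paper simply cites it.
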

\begin{proof}
Let $g$ be the transfer function in \eqref{eq:termcobound}, and define $\tau (x) :=  \exp 2\pi i g(x)$. We have
\begin{equation*}
\tau (x-\alpha) = e^{2 \pi i g(x-\alpha)} = e^{2 \pi i (g(x)-\chi_S(x)+ \mes S)} = \tau (x) e^{2\pi i \mes S} \quad \text{a.e.,}
\end{equation*}
since $\chi_S(x)$ is integer-valued. This shows that $\tau(x)$ is an eigenfunction of the irrational rotation by $\alpha$ with eigenvalue $\exp 2 \pi i \mes S$. All eigenvalues are known to be of the form $\exp 2 \pi i \ip{n}{\va}$ where $n \in \bz^d$, so the conclusion follows.
\end{proof}

\begin{remark}
Since the eigenfunction corresponding to the eigenvalue $\exp 2 \pi i \ip{n}{\va}$ is the exponential
$\exp -2 \pi i \ip{n}{x}$, the proof also shows that the transfer function $g(x)$ (extended to $\br^d$ as a
periodic function) is equal, up to an additive constant,
to the difference of an integer-valued function and the linear function $x \mapsto \ip{n}{x}$.
\end{remark}

We conclude this section with a simple, yet useful, relation between bounded remainder sets corresponding to different irrational vectors.
\begin{proposition}
\label{prop:bversusa}
Let $\beta \in \bz \va + \bz^d$, $\beta \notin \bz^d$. If $S \subset \br^d$ is a BRS with respect to $\beta$, then it is also a BRS with respect to $\va$.
\end{proposition}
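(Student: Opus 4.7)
The plan is to reduce the statement to the equivalence between the bounded remainder property and the existence of a bounded transfer function, which is Proposition \ref{prop:trnsequiv}. Since $\beta \in \bz\va + \bz^d$ with $\beta \notin \bz^d$, I would write $\beta = m\va + k$ for some nonzero integer $m$ and some $k \in \bz^d$.

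By the hypothesis that $S$ is a BRS with respect to $\beta$, Proposition \ref{prop:trnsequiv} provides a bounded measurable function $g$ on $\bt^d$ satisfying
\begin{equation*}
\chi_S(x) - \mes S = g(x) - g(x - \beta) \quad \text{a.e.}
\end{equation*}
Since $g$ is a function on the torus, $g(x - \beta) = g(x - m\va - k) = g(x - m\va)$, so
\begin{equation*}
\chi_S(x) - \mes S = g(x) - g(x - m\va) \quad \text{a.e.}
\end{equation*}

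The key observation is that the right-hand side telescopes along the $\va$-orbit. Explicitly, for $m > 0$ I would set
\begin{equation*}
h(x) := \sum_{j=0}^{m-1} g(x - j\va),
\end{equation*}
which gives $h(x) - h(x - \va) = g(x) - g(x - m\va)$, while for $m < 0$ I would take $h(x) := -\sum_{j=1}^{|m|} g(x + j\va)$, which satisfies the analogous identity. In either case $h$ is bounded, being a finite sum of bounded functions, and it satisfies the cohomological equation \eqref{eq:termcobound} for $S$ with respect to $\va$ in place of $\beta$. Applying Proposition \ref{prop:trnsequiv} in the reverse direction then shows that $S$ is a BRS with respect to $\va$.

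There is no real obstacle here; the proof is essentially a single line once one notices the $\bz^d$-periodicity of $g$ combined with the telescoping identity. The only small bookkeeping point is the sign of $m$, which merely determines whether one iterates the rotation forward or backward. (One can also verify that $\va$ is automatically irrational in this setting: a nontrivial rational relation among $1, \alpha_1, \ldots, \alpha_d$ would, after multiplying by $m$ and adding a suitable integer combination, yield a nontrivial rational relation among $1, \beta_1, \ldots, \beta_d$, contradicting the fact that the BRS hypothesis presupposes $\beta$ irrational.)
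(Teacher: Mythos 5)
Your proof is correct and follows essentially the same route as the paper: write $\beta = m\va + k$, use periodicity to replace $g(x-\beta)$ by $g(x-m\va)$, and telescope the transfer function along the $\va$-orbit (the paper assumes $m>0$ and notes the other case is similar, whereas you write it out). The parenthetical about irrationality is harmless but not needed, since the transfer-function equivalence does not rely on it.
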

\begin{proof}
Let $\beta = q\va + p$, where $0 \neq q \in \bz$ and $p \in \bz^d$. Suppose that $q>0$ (the case $q<0$ is similar). Since $S$ is a BRS with respect to $\beta$, there is a bounded function $g$ such that
\begin{equation*}
\chi_S(x)-\mes S = g(x)-g(x-\beta) \quad \text{ a.e. } x \in \bt^d \text{ .}
\end{equation*}
From the periodicity of $g$ we have
\begin{equation*}
g(x)-g(x-\beta) = g(x) - g(x-q\alpha) = g'(x) - g'(x-\alpha) ,
\end{equation*}
where $g'(x) := \sum_{k=0}^{q-1} g(x-k\alpha)$. Hence, we see that 
$g'$ is a bounded transfer function with respect to $\alpha$ for the set $S$, so $S$
is a BRS with respect to $\alpha$.
\end{proof}

\subsection{Hecke-Ostrowski and Sz\"{u}sz-Liardet constructions of BRS}\label{sec:hsl}
Next we recall the constructions of bounded remainder sets due to Hecke \cite{hecke}, Ostrowski \cite{ostrowski2, ostrowski}, 
Sz\"{u}sz \cite{szusz} and Liardet \cite{liardet}. Simple proofs, based on an explicit construction of the transfer function, are included. 
The proofs show that these results, initially formulated for simple sets, can be extended to sets in $\br^d$ which are not necessarily simple.

The first result on bounded remainder sets is due to Hecke \cite{hecke} and Ostrowski \cite{ostrowski2, ostrowski}, who identified the bounded remainder intervals in one dimension. The proof given below is well-known.

\begin{theorem}[Hecke-Ostrowski]
\label{thm:hecke}
Any interval $I \subset \br$ with length in $\bz \alpha + \bz$ is a BRS.
\end{theorem}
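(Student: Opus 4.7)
The plan is to invoke Proposition \ref{prop:trnsequiv} by explicitly writing down a bounded transfer function $g$ satisfying the cohomological equation $\chi_I(x) - \mes I = g(x) - g(x-\va)$ a.e. After translating (which leaves the BRS property untouched) we may assume $I = [0, \ell)$ with $\ell = q\va + p$, $q,p \in \bz$. If $q = 0$ then $\ell$ is an integer so $\chi_I \equiv \mes I$ a.e.\ and the claim is trivial; by the symmetry $\va \mapsto -\va$ we may further assume $q \geq 1$. We may also suppose $0 < \ell < 1$ (adjusting $p$), so that $[0,\ell)$ represents the interval on $\bt = \br/\bz$.

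The candidate transfer function is
\[
g(x) := -\sum_{k=0}^{q-1} B_1(x - k\va), \qquad B_1(y) := \{y\} - \tfrac{1}{2},
\]
which is manifestly bounded since each of the $q$ summands is bounded by $1/2$ on $\bt$. The cohomological equation is then checked by a straightforward telescoping:
\[
g(x) - g(x-\va) = B_1(x - q\va) - B_1(x).
\]
Since $q\va = \ell - p \equiv \ell \pmod 1$ we have $B_1(x-q\va) = B_1(x-\ell)$, and a two-line case analysis on whether $\{x\} \in [0,\ell)$ or $\{x\} \in [\ell,1)$ (i.e.\ whether $x \in I$ on $\bt$) shows the difference equals $\chi_I(x) - \ell$ a.e.

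There is no genuine obstacle here; the only mildly delicate point is the signs and the wrap-around in the case analysis for the sawtooth difference, and the routine reduction to $q \geq 1$ and $0 < \ell < 1$ via translation and the symmetry $\va \leftrightarrow -\va$. Once these are handled, Proposition \ref{prop:trnsequiv} immediately yields that $I$ is a BRS, completing the proof. (In Fourier language, the construction is forced: $\widehat{\chi}_I(n)/(1-e^{-2\pi i n\va}) = \sum_{k=0}^{q-1} e^{-2\pi i nk\va}/(2\pi in)$ because $e^{-2\pi i n\ell} = e^{-2\pi i nq\va}$, which is precisely the Fourier series of $g$ above.)
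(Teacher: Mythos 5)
Your proof is correct and essentially the paper's own argument: the paper takes the single sawtooth $g(x)=-\{x\}$ as a transfer function for the rotation by $\beta=\ell$ and then invokes Proposition~\ref{prop:bversusa}, whose proof is exactly the telescoping sum $\sum_{k=0}^{q-1} g(x-k\va)$ that you write out explicitly (your $g$ differs from it only by the constant $q/2$), with the cohomological equation and Proposition~\ref{prop:trnsequiv} doing the same work in both versions. The preliminary reductions you make (to $q\ge 1$ via $\va\mapsto-\va$, and to $0<\ell<1$, which is justified because $\chi_{[0,\ell)}$ and $\chi_{[0,\{\ell\})}$ differ by the constant $\lfloor\ell\rfloor$ under the multiplicity-function convention) are harmless.
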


\begin{proof} 
As the bounded remainder property is independent of position, we may assume that $I=[0,\beta)$, where $\beta \in \bz \alpha + \bz$, $\beta \notin \bz$ (the case $\beta \in \bz$ is trivial). By Proposition \ref{prop:bversusa} it is sufficient to show that $I$ is a BRS with respect to $\beta$. We will show that $g(x):= - \{ x\}$ is a transfer function for $I$, where $\{ x\}$ denotes the fractional part of a number $x$. Indeed, the function $g(x)-g(x-\beta)$ has a jump discontinuity of magnitude $+1$ at $x = 0$ and of magnitude $- 1$ at $x= \beta$, it is constant between these jumps, and has zero integral on $\bt$. These properties determine it uniquely as $\chi_I(x)- \mes I$, and so the claim is proved.
\end{proof}

Recall that by Kesten's theorem, for an interval to be a BRS it is not only sufficient, but also necessary, that its length belongs to $\bz \va + \bz$ (Proposition \ref{prop:mesgiven}).

In two dimensions the first non-trivial examples of bounded remainder sets were given by Sz\"{u}sz \cite{szusz}, who constructed a family of bounded remainder parallelograms. 

\begin{theorem}[Sz\"{u}sz]
\label{thm:szusz}
Let $v = (v_1, v_2) \in \bz \va + \bz^2$, $v \notin \bz^2$, and let $\sigma \in \bz v_1/v_2 + \bz$.
Then the parallelogram spanned by the vectors $(v_1, v_2)$ and $(\sigma,0)$ is a BRS.
\end{theorem}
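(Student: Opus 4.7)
The plan is to reduce the two-dimensional question to the one-dimensional Hecke--Ostrowski theorem via horizontal slicing. After possibly replacing $v$ by $-v$ and $\sigma$ by $-\sigma$ (which only translates the parallelogram), we may assume $v_2 > 0$ and $\sigma > 0$. By Proposition~\ref{prop:bversusa}, applicable since $v \in \bz\va + \bz^2 \setminus \bz^2$, it suffices to prove that $P$ is a BRS with respect to translation by the vector $v$ itself. A direct computation on $\br^2$ (the pieces $jv + P$ tile a long parallelogram) gives
\begin{equation*}
\sum_{j=0}^{n-1} \chi_P(x - jv) \;=\; \chi_{P_n}(x), \qquad P_n := \bigl\{ sv + t(\sigma, 0) : 0 \le s < n,\ 0 \le t < 1 \bigr\},
\end{equation*}
so the task reduces to bounding $|\chi_{P_n}(y) - n v_2 \sigma|$ uniformly in $n$ for a.e.\ $y \in \bt^2$.

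Next I would evaluate $\chi_{P_n}(y)$ by slicing $P_n$ along horizontal lines. For a representative $y = (y_1, y_2) \in [0,1)^2$, a lattice translate $(y_1 + k_1,\, y_2 + k_2)$ lies in $P_n$ precisely when $y_2 + k_2 \in [0, n v_2)$ and $(y_1 + k_1) - (y_2 + k_2)\beta \in [0, \sigma)$, where $\beta := v_1/v_2$. Setting $K := \lceil n v_2 - y_2 \rceil$ and $\xi := y_1 - y_2 \beta$, counting the admissible $k_1$ for each valid $k_2 \in \{0, \dots, K-1\}$ yields the identity
\begin{equation*}
\chi_{P_n}(y) \;=\; \sum_{j=0}^{K-1} \chi_{[0, \sigma)}(\xi - j \beta),
\end{equation*}
in which $\chi_{[0, \sigma)}$ on the right is the multiplicity function on $\bt$ of the interval $[0, \sigma) \subset \br$. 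This is precisely a partial orbit sum of $\chi_{[0, \sigma)}$ under rotation by $\beta$ on $\bt$.

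Finally, $\beta = v_1/v_2$ is irrational (otherwise the rational independence of $1, \alpha_1, \alpha_2$ would force $v \in \bz^2$), and the hypothesis $\sigma \in \bz(v_1/v_2) + \bz$ reads $\sigma \in \bz\beta + \bz$. Hence the Hecke--Ostrowski theorem (Theorem~\ref{thm:hecke}) provides a constant $C_0$ with $\bigl| \sum_{j=0}^{K-1} \chi_{[0,\sigma)}(\xi - j\beta) - K\sigma \bigr| \le C_0$ for a.e.\ $\xi \in \bt$, uniformly in $K$. Combined with $|K\sigma - n v_2 \sigma| \le |\sigma|$ and an application of Fubini (to promote ``a.e.\ $\xi$'' to ``a.e.\ $y \in \bt^2$'' through the measurable map $y \mapsto y_1 - y_2 \beta$), this yields the desired uniform bound $|\chi_{P_n}(y) - n v_2 \sigma| \le C_0 + |\sigma|$. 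The only point that demands care is the slicing identity; once it is in hand, the conclusion is an immediate appeal to the one-dimensional result, and I do not anticipate any deeper conceptual obstacle.
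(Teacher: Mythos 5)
Your argument is correct, but it takes a genuinely different route from the paper. The paper never proves Theorem \ref{thm:szusz} by a direct estimate: it obtains it as the two-dimensional case of Liardet's cylinder construction (Theorem \ref{thm:onedimup}), whose proof --- after the same opening reduction via Proposition \ref{prop:bversusa} to the rotation by $v$ itself --- builds an explicit bounded transfer function $g(x,y)=h\bigl(x-\tfrac{v_0}{v_d}\{y\}\bigr)-\mes\Sigma\cdot\{y\}$ and verifies the cohomological equation on the level of Fourier coefficients. You instead estimate the discrepancy directly: the orbit sums of $\chi_P$ under translation by $v$ telescope into the multiplicity function of the long parallelogram $P_n$, and horizontal slicing turns $\chi_{P_n}(y)$ into a one-dimensional Birkhoff sum of $\chi_{[0,\sigma)}$ for the rotation by $\beta=v_1/v_2$, to which Hecke--Ostrowski applies since $\beta$ is irrational and $\sigma\in\bz\beta+\bz$. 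This is essentially the original Sz\"usz/Liardet style of argument, which the paper explicitly says it departs from. Your slicing identity and lattice-point count are right (including $|K-nv_2|<1$), and the points you gloss over are routine: boundedness of the backward sums $\sum_j\chi_P(x-jv)$, resp.\ $\sum_j\chi_{[0,\sigma)}(\xi-j\beta)$, is equivalent to the corresponding BRS condition (translate by $(n-1)v$ for each fixed $n$ and intersect over $n$), and $\sigma=0$ is degenerate and trivial. The trade-off: your proof is more elementary (no Fourier analysis) and would generalize by the same slicing to Liardet's cylindric sets in any dimension, but it produces only the uniform discrepancy bound, whereas the paper's construction yields an explicit (Riemann integrable) transfer function, which is precisely what is exploited later, e.g.\ in Theorem \ref{thm:Rtransfer}.
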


\noindent
(to be more precise, in \cite{szusz} this was proved in the special case when $0<v_1<v_2<1$ and $\sigma=v_1/v_2$, but this case is not essentially different from the more general one above.)

This construction was extended to higher dimensions by Liardet \cite[Theorem 4]{liardet}, who showed how one can
obtain ``cylindric'' BRS in dimension $d$ from BRS in dimension $d-1$.
Let us denote a point in $\br^d = \br^{d-1} \times \br$ as $(x,y)$ where $x \in \br^{d-1}$ and $y \in \br$.

\begin{theorem}[Liardet]
\label{thm:onedimup}
Let $v = (v_1, \ldots , v_d) \in \bz \va + \bz^d$, $v \notin \bz^d$, and let $\Sigma \subset \br^{d-1}$ be a BRS with respect to the vector $(v_1/v_d, v_2/v_d, \ldots, v_{d-1}/v_d)$. Then the set
\begin{equation}
S= S(\Sigma, v) =\left\{ (x,0) + tv \, : \, x \in \Sigma \, , \, 0 \leq t < 1 \right\} , 
\label{eq:S}
\end{equation}
is a BRS (with respect to $\va$).
\end{theorem}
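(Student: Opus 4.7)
The plan is to produce an explicit bounded transfer function for $S$. By Proposition \ref{prop:bversusa} applied to $v \in \bz\va + \bz^d$, it suffices to prove that $S$ is a BRS with respect to $v$, and by Proposition \ref{prop:trnsequiv} this reduces to exhibiting a bounded measurable $g: \bt^d \to \br$ with $g(x, y) - g(x - v_d w, y - v_d) = \chi_S(x, y) - \mes S$ almost everywhere. Here I have set $w := (v_1/v_d, \ldots, v_{d-1}/v_d) \in \br^{d-1}$, so $v = v_d(w, 1)$; after reordering coordinates I may assume $v_d \ne 0$. Using the parameterization $S = \{(x + yw, y) : x \in \Sigma, \, 0 \leq y < v_d\}$, a direct unwinding of the multiplicity function gives
\[
\chi_S(x, y) = \sum_{\ell \in \bz :\, y + \ell \in [0, v_d)} \chi_\Sigma\bigl(x - (y + \ell) w\bigr), \qquad (x, y) \in \bt^{d-1} \times [0, 1).
\]

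Next I would split $\chi_S - \mes S$ into a pure $y$-fluctuation and a zero $x$-mean remainder. Writing $N(y) := \#\{\ell : y + \ell \in [0, v_d)\}$ and telescoping via the cohomological equation $\chi_\Sigma - \mes\Sigma = h(\cdot) - h(\cdot - w)$ for a bounded transfer function $h$ of $\Sigma$ (which exists by hypothesis and Proposition \ref{prop:trnsequiv}), the remainder collapses to $h(x - yw) - h\bigl(x - (y + N(y)) w\bigr)$, while the pure-$y$ piece $(N(y) - v_d)\mes\Sigma$ equals $\bigl(\chi_{[0, \{v_d\})}(y) - \{v_d\}\bigr)\mes\Sigma$ for $y \in [0, 1)$. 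Since $\{v_d\} \in \bz v_d + \bz$, the Hecke-Ostrowski construction in the proof of Theorem \ref{thm:hecke} supplies the $1$-dimensional transfer function $-\{y\}\mes\Sigma$ for the latter piece. Combining these observations, I propose the candidate
\[
g(x, y) := h\bigl(x - \{y\} w\bigr) + \mes\Sigma \cdot \bigl(\tfrac{1}{2} - \{y\}\bigr),
\]
which is bounded by $\|h\|_\infty + \tfrac{1}{2}\mes\Sigma$. The cohomological equation for $g$ would then be verified by a short case analysis on whether $y \in [0, \{v_d\})$ (so $N(y) = \lfloor v_d \rfloor + 1$) or $y \in [\{v_d\}, 1)$ (so $N(y) = \lfloor v_d \rfloor$), directly matching $g(x, y) - g(x - v_d w, y - v_d)$ against the decomposition of $\chi_S - \mes S$ in each case.

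The main delicate point is the non-periodicity in $y$: the function $h(x - yw)$ is not $\bz$-periodic in $y$ (as $w \notin \bz^{d-1}$ in general), which is precisely why $\{y\}$ rather than $y$ must appear in the argument of $h$. The jump this substitution introduces at integer $y$ has to be cancelled exactly by the jump in the Hecke-Ostrowski piece $\mes\Sigma \cdot (\tfrac{1}{2} - \{y\})$, and verifying this cancellation, together with the case-analysis above, is the crux of the computation.
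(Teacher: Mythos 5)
Your proposal is correct in substance and produces the same transfer function as the paper: your candidate $g(x,y)=h(x-\{y\}w)+\mes\Sigma\cdot(\tfrac12-\{y\})$ is, up to the harmless additive constant $\tfrac12\mes\Sigma$, exactly the function $g(x,y)=h\big(x-\tfrac{v_0}{v_d}\{y\}\big)-\mes\Sigma\cdot\{y\}$ used in the paper, and the reduction is identical (Proposition \ref{prop:bversusa} to pass to the rotation by $v$, Proposition \ref{prop:trnsequiv} to pass to the cohomological equation). Where you genuinely differ is the verification: the paper computes the Fourier coefficients of $\chi_S$ and of $g$ and matches them through \eqref{eq:termcoboundft}, whereas you unwind $\chi_S$ on a fundamental domain, telescope the cohomological equation of $h$ to get $\chi_S(x,y)-\mes S=(N(y)-v_d)\mes\Sigma+h(x-yw)-h(x-(y+N(y))w)$, and compare with $g(x,y)-g(x-v_0,y-v_d)$ in the two cases $y\in[0,\{v_d\})$ and $y\in[\{v_d\},1)$. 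That computation does close correctly in both cases; its advantage is that it is self-contained and real-variable, while the Fourier route makes the guess for $g$ nearly automatic and fits the machinery the paper reuses in the proof of Theorem \ref{thm:main1specific}.

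One small point needs repair. Your identity $\chi_S(x,y)=\sum_{\ell:\,y+\ell\in[0,v_d)}\chi_\Sigma(x-(y+\ell)w)$ and the values $N(y)\in\{\lfloor v_d\rfloor,\lfloor v_d\rfloor+1\}$ presuppose $v_d>0$, and ``reordering coordinates'' cannot deliver that: it only addresses $v_d\neq0$, which is anyway automatic here, since $v\notin\bz^d$ forces $v=q\va+p$ with $q\neq0$, so $v_d=q\alpha_d+p_d$ is irrational. The correct reduction is the paper's one-line remark: $S(\Sigma,v)=S(\Sigma,-v)+v$ together with translation invariance of the bounded remainder property allows one to assume $v_d>0$; alternatively, redo your case analysis with the condition $y+\ell\in(v_d,0]$ when $v_d<0$. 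With that sentence added, your argument is complete.
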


Starting with a Hecke-Ostrowski type interval, and applying Theorem \ref{thm:onedimup} iteratively, one can construct non-trivial bounded remainder sets
in any dimension. In two dimensions this yields the bounded remainder parallelograms given by \thmref{thm:szusz}.

The proof below differs from those in \cite{liardet,szusz} in that we do not estimate the discrepancy of $S$ directly, but rather explicitly construct the transfer function for $S$ and use Fourier coefficients to verify that it satisfies the cohomological equation.

\begin{proof}[Proof of Theorem \ref{thm:onedimup}]
We have $S(\Sigma, v) = S(\Sigma, -v) + v$, so we may restrict ourselves to the case when $v_d$
(the last entry in the vector $v$) is positive. 
By Proposition \ref{prop:bversusa} it will be enough to show that $S$ is a BRS with respect to $v$. 

Let $v_0=(v_1, v_2, \ldots, v_{d-1})$ be the vector in $\br^{d-1}$ consisting of the first $d-1$ entries of $v$. 
We wish to find a bounded function $g$ on $\bt^d$ satisfying
the cohomological equation
\begin{equation}
\chi_S(x,y) - \mes S  =  g(x,y)-g(x-v_0, y-v_d) \quad \text{for a.e. } (x,y) \in \bt^{d-1}\times\bt.
\label{eq:coeffmatch}
\end{equation}
We use \eqref{eq:termcoboundft} to reformulate this equation in terms of the Fourier coefficients
$\widehat{g}(m,n)$ and $\widehat{\chi}_S(m,n)$, where $(m,n) \in \bz^{d-1} \times \bz$. The latter
are easy to calculate using \eqref{eq:chift} and \eqref{eq:S}.
This yields that the cohomological equation \eqref{eq:coeffmatch} is equivalent to the condition
\begin{equation}
\label{eq:lig2}
\widehat{g}(m,n) = \frac{\widehat{\chi}_{\Sigma}(m) }{2\pi i \left(\ip{m}{v_0}/v_d + n\right)},
\quad (m,n) \neq (0,0).
\end{equation}

We know that $\Sigma$ is a BRS with respect to the vector $v_0/v_d$, so it admits a bounded transfer function $h: \bt^{d-1} \to \br$ satisfying
\begin{equation}
\label{eq:htransferft}
\widehat{h}(m) = \frac{\widehat{\chi}_{\Sigma}(m)}{1 - e^{-2\pi i \ip{m}{v_0}/v_d}},
\quad m \neq 0.
\end{equation}
Combining \eqref{eq:lig2} and \eqref{eq:htransferft} we thus get  that  equation \eqref{eq:coeffmatch} is equivalent to 
\begin{alignat}{2}
& \label{eq:lig2m}
\widehat{g}(m,n) = \widehat{h}(m) \cdot \frac{1 - e^{-2\pi i \ip{m}{v_0}/v_d}}{2\pi i \left(\ip{m}{v_0}/v_d + n\right)}, \quad \quad& m\neq0,\\[8pt]
& \label{eq:lig2n}
\widehat{g}(m,n) = \frac{\mes \Sigma}{2\pi i n}, \quad \quad & n \neq 0, \; m =0.
\end{alignat}

Now consider the bounded function $g$ defined by
\begin{equation*}
\label{eq:tfcylinder}
g(x,y) = h\big(x-\frac{v_0}{v_d} \{y\}\big) - \mes \Sigma \cdot \{ y \},
\end{equation*}
where $\{ y \}$ denotes the fractional part of $y$. 
Note that $g$ is $\bz^d$-periodic, so it may be considered as a function on $\bt^d$.
It is straightforward to check that the Fourier coefficients of this function $g$ 
satisfy \eqref{eq:lig2m} and \eqref{eq:lig2n}. Hence $g$ is a bounded transfer function for $S$,
so the set $S$ is a BRS with respect to $v$. 
\end{proof}

\begin{remark}
Actually, in \cite{liardet} a more general version of Theorem \ref{thm:onedimup} was formulated, where the cylindric set $S$ does not necessarily have $\Sigma \times \{ 0 \}$ as its basis. Namely, under the conditions of Theorem \ref{thm:onedimup}, the set $S$ of the form 
\begin{equation*}
S = \left\{ (x, 0) + tv \, : \, x \in \Sigma \, , \, \varphi(x) \leq t < \varphi(x) +1 \right\}
\end{equation*}
is a BRS, where $\varphi : \Sigma \rightarrow \br$ is any bounded, measurable function.
This is deduced from the  special case above  by an argument similar to Proposition \ref{prop:shiftsubset} in \sect\ref{sec:main2} (see \cite[p. 277]{liardet}).
\end{remark}


\section{Parallelepipeds of bounded remainder}\label{sec:main1}
This section is devoted mainly to the proof of Theorem \ref{thm:main1}, which states that any {\piped} $P$ spanned by vectors in $\bz \va + \bz^d$ is a bounded remainder set. In the end of the section we also prove
Corollaries \ref{cor:zonotope} and \ref{cor:main1anymes} (see \sect \ref{sec:intro})
on zonotopes of bounded remainder, and on parallelepipeds of bounded remainder with pre-given measure.
We also formulate an extension of Theorem \ref{thm:main1} which provides a
more general construction of bounded remainder {\piped}s.

The proof of Theorem \ref{thm:main1} is based on the explicit construction of a bounded transfer function $g$ satisfying the cohomological equation
\begin{equation}\label{eq:coboundP}
 \chi_P(x)- \mes P = g(x) - g(x-\va) \quad \text{a.e.}
\end{equation}
Moreover, we show that $g$ is Riemann integrable, a fact that will be needed
later on for the proof of Theorem \ref{thm:Rtransfer}. We thus reformulate the result as follows.
\begin{theorem}
\label{thm:main1specific}
Let $P$ be a {\piped} in $\br^d$ spanned by vectors $v_1, \ldots , v_d$ belonging to $\bz \va + \bz^d$. Then $P$ is a bounded remainder set,
and moreover, $P$ admits a Riemann integrable transfer function.
\end{theorem}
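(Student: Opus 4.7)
The plan is to exhibit explicitly a bounded, Riemann integrable transfer function $g$ satisfying $\chi_P(x)-\mes P=g(x)-g(x-\va)$ almost everywhere on $\bt^d$.

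Write each spanning vector as $v_j=q_j\va+p_j$ with $q_j\in\bz$ and $p_j\in\bz^d$. If every $q_j=0$ then $v_j\in\bz^d$ for all $j$; then $P$ equals $V[0,1)^d$ and tiles $\br^d$ under the sublattice $V\bz^d\subseteq\bz^d$, so $\chi_P\equiv|\det V|$ a.e.\ and $g\equiv 0$ suffices. Otherwise, after permuting and sign-flipping, assume $q_1>0$. The averaging construction in the proof of Proposition~\ref{prop:bversusa} shows that once a bounded Riemann integrable transfer function $g_1$ for $P$ with respect to the translation by $v_1$ is produced, the function $g(x):=\sum_{k=0}^{q_1-1}g_1(x-k\va)$ is a transfer function with respect to $\va$ of the required quality, inheriting boundedness and Riemann integrability.

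To construct $g_1$, I exploit the cylindric structure of $P$ in the $v_1$-direction: $P=P'+[0,1)v_1$, where $P'=\sum_{j\ge 2}[0,1)v_j$ is the face-parallelepiped spanned by $v_2,\ldots,v_d$. On the Fourier side, using $\widehat{\chi}_P(n)=|\det V|\prod_j\phi(\ip{n}{v_j})$ with $\phi(w)=(1-e^{-2\pi iw})/(2\pi iw)$, together with the integer-shift identity $e^{-2\pi i\ip{n}{v_j}}=e^{-2\pi iq_j\ip{n}{\va}}$ (which follows from $\ip{n}{p_j}\in\bz$), the factor $\phi(\ip{n}{v_1})$ in the numerator cancels the denominator $1-e^{-2\pi i\ip{n}{v_1}}$ cleanly, giving the target
$$\widehat{g_1}(n)=\frac{|\det V|}{2\pi i\ip{n}{v_1}}\prod_{j\ge 2}\phi(\ip{n}{v_j}),\qquad n\ne 0.$$
I then define $g_1$ as a piecewise linear function on the fundamental cube $[0,1)^d$: decompose the cube into finitely many cells by the traces of the hyperfaces of the (finitely many) $\bz^d$-translates of $P$ that meet the cube, and on each cell let $g_1$ be an explicit linear function of the $v_1$-coordinates in the translates of $P$ containing that cell, with coefficients fixed so that the Fourier coefficients of $g_1$ agree with the displayed formula. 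Boundedness and Riemann integrability are then immediate: piecewise linearity on a compact domain gives boundedness, and the discontinuity set, being a finite union of codimension-one hyperplanes, has Lebesgue measure zero.

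The hardest step is pinning down these linear coefficients on each cell and verifying the Fourier match; the central ingredient is the cancellation $\phi(\ip{n}{v_1})(1-e^{-2\pi iq_1\ip{n}{\va}})^{-1}=(2\pi i\ip{n}{v_1})^{-1}$, which is available only because of the structural assumption $v_j\in\bz\va+\bz^d$, and it reduces the Fourier verification to elementary (though multi-variable) integrals of linear-times-complex-exponential on a parallelepiped. A more conceptual alternative would be induction on $d$ via a generalized cylindric construction in the spirit of the remark following Theorem~\ref{thm:onedimup}, starting from the Hecke--Ostrowski base case; this would circumvent the direct Fourier bookkeeping, at the cost of strengthening the inductive hypothesis to allow the cross-section of the generalized cylinder to be a parallelepiped whose spanning vectors are not of the standard $\bz\va''+\bz^{d-1}$ form.
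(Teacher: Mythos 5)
Your reduction to the rotation by $v_1$ is where the argument breaks: it is not true in general that $P$ admits a bounded transfer function with respect to translation by $v_1$, so the averaging step (the analogue of Proposition~\ref{prop:bversusa}) has nothing to start from. Concretely, take $d=2$, $v_1=2\va+e_1$ and $v_2=\va$; these lie in $\bz\va+\bz^2$ and are independent, and $\mes P=|\det(v_1,v_2)|=\alpha_2$. Translation by $v_1$ on $\bt^2$ is translation by the irrational vector $2\va$, and by Proposition~\ref{prop:mesgiven} any BRS for $2\va$ has measure in $\bz+2\bz\alpha_1+2\bz\alpha_2$, which excludes $\alpha_2$ by rational independence. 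Hence no bounded $g_1$ with $\chi_P(x)-\mes P=g_1(x)-g_1(x-v_1)$ exists for this $P$, even though your formal Fourier solution $\widehat{g_1}(n)=\frac{D}{2\pi i\ip{n}{v_1}}\prod_{j\ge 2}\phi(\ip{n}{v_j})$ is well defined coefficientwise. The point is that your cancellation neutralizes the small denominators only in the $v_1$-direction: for $j\ge 2$ the factor $1-e^{-2\pi i\ip{n}{v_j}}$ does not cancel $\ip{n}{v_1}$, so the transverse directional derivatives of the formal solution are not measures, the function cannot be piecewise linear, and in the example above it is not even bounded. This is exactly why the paper works with the $\va$-rotation itself, where the hypothesis $v_k\in\bz\va+\bz^d$ makes the cancellation available in \emph{all} $d$ spanning directions simultaneously, so that the full gradient of the formal solution is a measure supported on the faces $\Pi_{rj}$.

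Independently of the counterexample, the step you label as the hardest one is precisely the content of the theorem and is not carried out: declaring $g_1$ to be piecewise linear on cells ``with coefficients fixed so that the Fourier coefficients agree'' assumes the existence of a bounded solution to the cohomological equation, which is the thing to be proved (boundedness cannot be deduced from a piecewise linearity that has not been established). In the paper this gap is filled by a genuine construction: the gradient series is identified as $N(x)\,d\sigma(x)-\omega\,dx$ on an oriented hypersurface which is shown to be a cycle (Lemma~\ref{lem3.4}, via Stokes), the transfer function is defined by intersection numbers minus a linear term and shown to be well defined (Lemma~\ref{lem3.5}), and the cohomological equation is verified distributionally (Lemma~\ref{lem3.6}). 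Your alternative sketch (induction on $d$ via a generalized cylinder as in Theorem~\ref{thm:onedimup}) is closer to a viable route, but as you note it requires a strengthened inductive hypothesis and is not developed here; as written, the proposal does not prove the theorem.
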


Below we will assume $d\geq 2$. The {\piped} $P$ is thus given by
\begin{equation*}
P=P(v_1,\ldots , v_d) = \left\{ \sum_{k=1}^d t_k v_k \, : \, 0 \leq t_k < 1 \right\},  
\end{equation*}
where $v_1, \ldots , v_d$ are linearly independent vectors of the form
\begin{equation}
\label{eq:vqr}
v_k = q_k \va + p_k, \quad q_k \in \bz, \; p_k \in \bz^d.
\end{equation}

\subsection{Fourier series of the transfer function}
Recall that by \eqref{eq:termcoboundft} the cohomological equation \eqref{eq:coboundP} is equivalent to the condition
\begin{equation*}
 \widehat{g}(\lambda) = \frac{\widehat{\chi}_P(\lambda)}{1- e^{-2\pi i \ip{\va}{\lambda}}},
\quad \lambda \in \bz^d \setminus \{ 0\}.
\end{equation*}
The Fourier coefficients $\widehat{\chi}_P(\lambda)$ are easy to calculate; using \eqref{eq:chift} we have
\begin{equation*}
\widehat{\chi}_P (\lambda) = \int_P e^{-2\pi i \ip{\lambda}{x}} dx = D \prod_{k=1}^d \widehat{\1}_I \left( \ip{v_k}{\lambda}\right),
\end{equation*}
where $D := |\det (v_1, \ldots, v_d)| = \mes P$, and 
\begin{equation*}
\widehat{\1}_I(\xi) =\frac{1-e^{-2\pi i \xi}}{2 \pi i \xi} \quad (\xi \in \br) 
\end{equation*} 
is the Fourier transform of the indicator function of the interval $I =[0,1)$. 
Our goal is thus to show that there is a Riemann integrable (and, in particular, bounded)
function $g$ with Fourier series
\begin{equation}\label{1}
\sum_{\lambda\in\mathbb Z^d}c(\lambda)e^{2\pi i\langle \lambda,x\rangle},
\end{equation}
where the coefficients $\{c(\lambda)\}$ are given by
\begin{equation}
\label{eq:fouriercoeff}
c(\lambda) =  \frac{D}{1-e^{-2\pi i \ip{\va}{\lambda}}} \cdot \prod_{k=1}^d \widehat{\1}_I \left( \ip{v_k}{\lambda}\right),
\quad \lambda \in \bz^d \setminus \{ 0 \},
\end{equation}
and where $c(0)$ may be any real number.

\subsection{Directional derivatives}
For a vector $h=(h_1,h_2,\dots, h_d)\in\mathbb R^d,$ we denote by $\partial/\partial h$ the differentiation operation with respect to $h.$ Thus
$$\frac{\partial}{\partial h}= h_1 \frac{\partial}{\partial x_1}+h_2 \frac{\partial}{\partial x_2}+\cdots +h_d \frac{\partial}{\partial x_ d}.$$
By applying this operation term-by-term to the formal Fourier series \eqref{1} we obtain another formal series
\begin{equation}\label{2}
\sum_{\lambda\in\mathbb Z^d}2\pi i\langle h,\lambda\rangle c(\lambda)e^{2\pi i\langle\lambda,x\rangle}.
\end{equation}
Let us identify the series \eqref{2} in the case when $h$ is one of the vectors $v_1,\dots,v_d$ spanning the parallelepiped $P$.
Recall that these vectors are of the form \eqref{eq:vqr}. Fix $1\le r\le d$, and assume 
for the moment that $q_r>0.$ Since $p_r\in\mathbb Z^d,$ we have
\begin{align*}
2\pi i\langle v_r,\lambda\rangle  \widehat{\1}_I(\langle v_r,\lambda\rangle) 
&=1-e^{-2\pi i\langle v_r,\lambda\rangle}=1-e^{-2\pi iq_r\langle \alpha,\lambda\rangle}\\
&=(1-e^{-2\pi i\langle \alpha,\lambda\rangle}) \sum_{j=0}^{q_r-1}e^{-2\pi ij\langle\alpha,\lambda\rangle}.
\end{align*} 
Hence, it follows from \eqref{eq:fouriercoeff} that for $\lambda\in\mathbb Z^d\setminus\{0\}$ we have
\begin{equation}\label{3}
2\pi i\langle v_r,\lambda\rangle c(\lambda)=D\cdot\prod_{k\ne r}\widehat{\1}_I(\langle v_k,\lambda\rangle)\cdot
\sum_{j=0}^{q_r-1}e^{-2\pi ij\langle\alpha,\lambda\rangle}.
\end{equation}
Let $\theta_k$ $(1\le k\le d)$ denote the image of the Lebesgue measure on the interval $I=[0,1)$ under the mapping $t\mapsto t  v_k$ into $\mathbb T^d.$ It is easy to verify that
$$\widehat \theta_k(\lambda)=\widehat{\1}_I(\langle v_k,\lambda\rangle),\quad \lambda \in\mathbb Z^d.$$
It thus follows from \eqref{3} that if $h=v_r,$ then \eqref{2} is the Fourier series of the measure
\begin{equation}\label{4}
D \cdot \Big(\prod_{k\ne r}\theta_k- dx\Big)\ast \sum_{j=0}^{q_r-1}\delta_{j\alpha}
\end{equation}
where by $\prod_{k\ne r} \theta_k$ we mean the convolution of the measures $\theta_k$ $(k \neq r)$, 
and $dx$ is the Lebesgue measure on $\mathbb T^d$
(the measure $dx$ enters into the formula \eqref{4} due to the vanishing of the central Fourier coefficient in \eqref{2}).

To describe the measure \eqref{4} more explicitly, let $\Pi_{rj}$ denote the $(d-1)$-dimensional parallelepiped spanned by the vectors $\{v_k\}_{k\ne r}$ and based at the point $j\alpha$ on $\mathbb T^d,$ that is
\begin{equation}
\label{eq:pirj}
\Pi_{rj} = \Big\{ j \alpha + \sum_{k\neq r} t_k v_k \, : \, 0 \leq t_k < 1 \Big\},  
\end{equation}
and let $\sigma_{rj}$ denote the $(d-1)$-dimensional volume measure on $\Pi_{rj}.$ 
Then the measure \eqref{4} is seen to be equal to
$$\frac{D}{D_r}\sum_{j=0}^{q_r-1}\sigma_{rj} - D  q_r\,  dx,$$
where $D_r$ denotes the common $(d-1)$-dimensional volume of the parallelepipeds $\Pi_{rj}$ corresponding to a fixed $r.$
Observe that the first term consists of the singular part of the measure, while the second term
is the absolutely continuous part.

In a similar way, one can identify \eqref{2} with $h=v_r$ as the Fourier series of a measure also in the cases when $q_r<0$ and $q_r=0.$ This yields the following

\begin{lem}\label{lem3.2}
For each $1\le r\le d,$ the series
$$\sum_{\lambda\in\mathbb Z^d} 2 \pi i\langle v_r,\lambda\rangle c(\lambda)e^{2\pi i\langle\lambda,x\rangle}$$
is the Fourier series of the measure
\begin{alignat*}{2}
&\frac{D}{D_r}\sum_{j=0}^{q_r-1}\sigma_{rj}-D  q_r \, dx, \quad \quad &q_r>0, \\
-&\frac{D}{D_r}\sum_{j=q_r}^{-1}\sigma_{rj}-D  q_r \, dx, \quad \quad &q_r<0, \\[8pt]
&0, \quad \quad &q_r=0, 
\end{alignat*}
on the torus $\mathbb T^d.$
\end{lem}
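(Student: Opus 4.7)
The case $q_r>0$ has essentially been carried out in the paragraphs preceding the statement, where \eqref{2} with $h=v_r$ is identified with the measure \eqref{4} of the desired form. I will handle the two remaining cases by the same strategy. The common starting point, using $p_r\in\bz^d$, is the identity
\begin{equation*}
2\pi i\langle v_r,\lambda\rangle\,\widehat{\1}_I(\langle v_r,\lambda\rangle)
=1-e^{-2\pi i\langle v_r,\lambda\rangle}
=1-e^{-2\pi i q_r\langle\va,\lambda\rangle}.
\end{equation*}

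For $q_r<0$, I use the analogue of the geometric-sum identity employed in the positive case, namely
\begin{equation*}
(1-e^{-2\pi iy})\sum_{j=q_r}^{-1}e^{-2\pi ijy}=e^{-2\pi iq_r y}-1,
\end{equation*}
which rewrites the expression above as $-(1-e^{-2\pi i\langle\va,\lambda\rangle})\sum_{j=q_r}^{-1}e^{-2\pi ij\langle\va,\lambda\rangle}$. Substituting into \eqref{eq:fouriercoeff} gives, for $\lambda\neq0$,
\begin{equation*}
2\pi i\langle v_r,\lambda\rangle c(\lambda)=-D\prod_{k\ne r}\widehat{\1}_I(\langle v_k,\lambda\rangle)\sum_{j=q_r}^{-1}e^{-2\pi ij\langle\va,\lambda\rangle}.
\end{equation*}
Using the identity $\sigma_{rj}=D_r\bigl(\prod_{k\ne r}\theta_k\bigr)\ast\delta_{j\va}$, whose Fourier coefficient at $\lambda$ is $D_r\prod_{k\ne r}\widehat{\1}_I(\langle v_k,\lambda\rangle)\,e^{-2\pi ij\langle\va,\lambda\rangle}$, I recognise the right-hand side as the Fourier coefficient at $\lambda$ of $-\tfrac{D}{D_r}\sum_{j=q_r}^{-1}\sigma_{rj}$. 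The total mass of this signed measure is $Dq_r$, while the formal series \eqref{2} has vanishing constant term, so adding the correction $-Dq_r\,dx$ yields a measure whose full Fourier expansion agrees with the series, producing the measure stated in the lemma.

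When $q_r=0$, one has $v_r=p_r\in\bz^d$, so $\langle v_r,\lambda\rangle\in\bz$ for every $\lambda\in\bz^d$. Consequently $1-e^{-2\pi i\langle v_r,\lambda\rangle}\equiv0$, forcing $2\pi i\langle v_r,\lambda\rangle c(\lambda)=0$ for all $\lambda\ne0$; at $\lambda=0$ the coefficient is trivially zero. The series is identically zero, matching the zero measure.

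Since the geometric identification of $\sigma_{rj}$ as a translate of $D_r\prod_{k\ne r}\theta_k$ applies uniformly in all three regimes, I do not anticipate a substantive obstacle beyond the sign bookkeeping in the telescoping identity and the single absolutely continuous correction required in the negative case to restore the vanishing central coefficient.
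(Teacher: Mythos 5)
Your proposal is correct and follows essentially the same route as the paper: the $q_r>0$ case is exactly the computation carried out in the text preceding the lemma, and your treatment of $q_r<0$ (telescoping identity with the sign flipped, identification $\sigma_{rj}=D_r\bigl(\prod_{k\ne r}\theta_k\bigr)\ast\delta_{j\va}$, and the $-Dq_r\,dx$ correction for the vanishing central coefficient) and of $q_r=0$ is precisely the "similar" argument the paper leaves to the reader.
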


\subsection{Gradient of the transfer function}
 With the help of \lemref{lem3.2}, we can also identify the vector-valued series
 \begin{equation}\label{10}
 \sum_{\lambda\in \mathbb Z^d} 2\pi i\lambda c(\lambda)e^{2\pi i\langle \lambda,x\rangle},
 \end{equation}
 obtained by applying the gradient operator
$\left(\frac{\partial}{\partial x_1},\frac{\partial}{\partial x_2},\cdots, \frac{\partial}{\partial x_d}\right)$
 term-by-term to the formal series \eqref{1}.
 Observe that
 \begin{equation}\label{11}
\lambda =\sum_{r=1}^d \langle v_r,\lambda\rangle  v_r^* 
 \end{equation}
where $v_1^*,\dots,v_d^*$ is the system of vectors biorthogonal to $v_1,\dots, v_d$ satisfying
$$\langle v_k,v_r^*\rangle=\begin{cases} 1, & k=r\\ 0, & k\ne r.\end{cases} $$
Using \eqref{11} and \lemref{lem3.2} we get that \eqref{10}
is the Fourier series of the vector-valued measure
\begin{equation}\label{11a}
\sum_{r=1}^d \Big( \pm \frac{D}{D_r} v_r^* \sum_j\sigma_{rj} \Big) -D\Big(\sum_{r=1}^d q_rv_r^*\Big)dx,
\end{equation}
where the sign $\pm$ is equal to the sign of $q_r$, and the summation with respect to $j$ is taken for $0\le j< q_r$
if $q_r>0,$ for $q_r\le j<0$ if $q_r<0,$ and understood to be zero if $q_r=0.$

A more convenient expression for this measure can be obtained if we define a measure
\begin{equation*}
\sigma:=\sum_{r=1}^d\sum_j\sigma_{rj},
\end{equation*}
where the summation with respect to $j$ is understood as in \eqref{11a},
and let $N(x)$ be the vector-valued function on the support of $\sigma$ given by
\begin{equation*}
N(x):={+}\frac{v_r^*}{\|v_r^*\|},\quad x\in \Pi_{rj}, \quad q_r>0,
\end{equation*}
\begin{equation*}
N(x):=-\frac{v_r^*}{\|v_r^*\|},\quad x\in \Pi_{rj}, \quad q_r<0.
\end{equation*}
Note that on each parallelepiped $\Pi_{rj}$, the vector $N(x)$ is a unit normal vector to this parallelepiped.
We also define the vector
\begin{equation}\label{16}
\omega:=D \sum_{r=1}^dq_rv_r^*.
\end{equation}
Using the fact that $\|v_r^*\|={D_r}/{D}$,
the measure in \eqref{11a} thus may be written as
$$N(x)\,d\sigma(x)-\omega\, dx.$$
This can be summarized by the following

\begin{lemma}\label{lem3.3}
The vector-valued series
\begin{equation}\label{17}
\sum_{\lambda\in\mathbb Z^d}2\pi i\lambda c(\lambda)e^{2\pi i\langle \lambda,x\rangle}
\end{equation}
is the Fourier series of the vector-valued measure 
\begin{equation}\label{18}
(\mu_1,\dots,\mu_d)=N(x)\,d\sigma(x)-\omega\, dx
\end{equation}  
on the torus $\mathbb T^d.$
\end{lemma}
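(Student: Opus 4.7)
The plan is to derive this directly from \lemref{lem3.2} by using the biorthogonal expansion \eqref{11} of $\lambda$, which reduces the gradient series to a linear combination of the directional-derivative series already analyzed.

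First I would substitute \eqref{11} into \eqref{17}: since $\lambda = \sum_{r=1}^d \langle v_r, \lambda\rangle v_r^*$, interchanging the finite sum over $r$ with the formal series over $\lambda$ yields
\[
\sum_{\lambda\in\mathbb Z^d}2\pi i\lambda c(\lambda)e^{2\pi i\langle\lambda,x\rangle}
= \sum_{r=1}^d v_r^* \sum_{\lambda\in\mathbb Z^d} 2\pi i\langle v_r,\lambda\rangle c(\lambda) e^{2\pi i\langle\lambda,x\rangle}.
\]
By \lemref{lem3.2}, the inner series is the Fourier series of the measure $\tfrac{D}{D_r}\sum_j \sigma_{rj} - Dq_r\,dx$ when $q_r>0$, the measure $-\tfrac{D}{D_r}\sum_j\sigma_{rj} - Dq_r\,dx$ when $q_r<0$, and zero when $q_r=0$. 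Summing these expressions (weighted by $v_r^*$) gives exactly the vector-valued measure \eqref{11a}.

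Next I would rewrite \eqref{11a} in the compact form stated in the lemma. The absolutely continuous part $-D\sum_{r=1}^d q_r v_r^*\,dx$ is by definition $-\omega\,dx$ (see \eqref{16}). For the singular part, I would fix a parallelepiped $\Pi_{rj}$ in the support of $\sigma$ and observe that on this piece the total vector-valued measure equals $\pm\tfrac{D}{D_r}v_r^*\,d\sigma_{rj}$, with the sign matching the sign of $q_r$. Using $\|v_r^*\| = D_r/D$ (since $v_r^*$ is biorthogonal to the $(d-1)$-dimensional face spanned by $\{v_k\}_{k\ne r}$), this simplifies to $N(x)\,d\sigma_{rj}(x)$, where $N(x)$ is precisely the unit vector defined before the lemma. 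Aggregating over all $(r,j)$ with $q_r\ne 0$ produces $N(x)\,d\sigma(x)$, giving the claimed identity \eqref{18}.

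The only subtlety is ensuring that the parallelepipeds $\Pi_{rj}$ appearing in the definition of $\sigma$ are pairwise essentially disjoint (or at most intersect on sets of $\sigma_{rj}$-measure zero), so that $N(x)$ is well defined $\sigma$-a.e.; this is a generic-position observation that follows from the irrationality of $\va$, since the translates $\{j\va\}$ are distinct modulo $\mathbb Z^d$ for distinct $j$ and the $(d-1)$-dimensional faces in question have measure zero. Once this is noted, the rest of the argument is purely formal bookkeeping with \lemref{lem3.2} and the biorthogonality relations.
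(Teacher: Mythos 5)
Your argument is correct and is essentially the paper's own proof: decompose $\lambda$ via the biorthogonal relation \eqref{11}, apply \lemref{lem3.2} to each directional series to obtain \eqref{11a}, and then rewrite the singular and absolutely continuous parts as $N(x)\,d\sigma(x)$ and $-\omega\,dx$ using $\|v_r^*\|=D_r/D$ and \eqref{16}. One correction to your closing remark: the pairwise essential disjointness of the $\Pi_{rj}$ is neither needed nor established by your reasoning (distinctness of the base points $j\va$ mod $\bz^d$ does not prevent overlap), and in fact two parallel pieces $\Pi_{rj}$, $\Pi_{rj'}$ with the same $r$ can overlap in a set of positive $(d-1)$-measure on the torus, namely when $(j-j')\va$ lies in $\spann\{v_k\}_{k\ne r}+\bz^d$ (this can happen, e.g.\ if some $v_k\in 2\bz\va+2\bz^d$). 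This causes no harm: the identity \eqref{18} is to be read with $\sigma=\sum_{r,j}\sigma_{rj}$ counted with multiplicity (exactly the convention the paper adopts for the chain $\Pi$), and $N$ is still unambiguous $\sigma$-a.e.\ because pieces with different $r$ lie in non-parallel hyperplanes, so they can only meet in dimension at most $d-2$, while overlapping pieces with the same $r$ carry the same unit normal $\pm v_r^*/\|v_r^*\|$.
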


It follows from \lemref{lem3.3} that the measures $\mu_1,\dots,\mu_d$ in \eqref{18} satisfy the conditions
\begin{equation}\label{20}
\int_{\mathbb T^d}d\mu_k=0\qquad (1\le k\le d)
\end{equation}
and
\begin{equation}\label{21}
\frac{\partial \mu_k}{\partial x_r}=\frac{\partial \mu_r}{\partial x_k}\qquad (k\ne r),
\end{equation}
where \eqref{21} is understood in the sense of distributions.
This is immediate from the fact that the Fourier series of $(\mu_1,\dots,\mu_d)$ is of the form \eqref{17},
that is, $\mu_1,\dots,\mu_d$ are formally the partial derivatives of the series \eqref{1}.

We remark that the converse is also true: if $\mu_1,\dots,\mu_d$ are measures (or distributions) on $\mathbb T^d$ satisfying \eqref{20} and \eqref{21}, then $\mu_1,\dots,\mu_d$ are the partial derivatives of some formal Fourier series (which can be shown to represent a distribution on $\mathbb T^d).$

With \lemref{lem3.3} established, we shall no longer need to refer to the specific form of the 
parallelepiped $P$ and the parallelepipeds $\Pi_{rj}$ associated to it.
The rest of the argument essentially relies on \eqref{20} and \eqref{21} only.

\subsection{Hypersurface of singularity}
We shall denote by $\Pi$ the collection of parallelepipeds $\{\Pi_{rj}\}$ (where $1\leq r \leq d$ and, as before, $0\le j< q_r$ if $q_r>0,$ and $q_r\le j<0$ if $q_r<0$) equipped with the orientation determined by the unit normal vector $N(x)$. We think of $\Pi$ as a piecewise-linear, oriented hypersurface on $\mathbb T^d$. This hypersurface may be self-intersecting, and overlapping parts of the $\Pi_{rj}$ are counted with the corresponding multiplicity. Such an object is standard in homology theory, and is called a ``$(d-1)$-chain''. Thus the measure $\sigma$ is the $(d-1)$-dimensional volume measure on $\Pi$.

\begin{lemma}\label{lem3.4}
The oriented hypersurface $\Pi$ has no boundary, that is, $\Pi$ is a closed hypersurface.
\end{lemma}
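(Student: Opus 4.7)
The plan is to deduce $\partial \Pi = 0$ from condition \eqref{21} of \lemref{lem3.3} via a Stokes-type argument. The statement that the oriented piecewise-linear hypersurface $\Pi$ has no boundary means, equivalently (via the associated $(d-1)$-current $T_\Pi(\zeta) := \int_\Pi \zeta$), that $\int_\Pi d\zeta = 0$ for every smooth $(d-2)$-form $\zeta$ on $\bt^d$.

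First I would expand an arbitrary such $\zeta$ in the basis $\widehat{dx}_{ij} := \iota_{e_j}\iota_{e_i}\Omega$ (with $\Omega = dx_1\wedge\cdots\wedge dx_d$) as $\zeta = \sum_{i<j}\zeta_{ij}\,\widehat{dx}_{ij}$. Computing $d\zeta$, the identity $dx_l\wedge\widehat{dx}_{ij} = 0$ for $l\notin\{i,j\}$, combined with the interior-product calculations $dx_i\wedge\widehat{dx}_{ij} = -\iota_{e_j}\Omega$ and $dx_j\wedge\widehat{dx}_{ij} = +\iota_{e_i}\Omega$, leaves only the partials $\partial_i\zeta_{ij}$ and $\partial_j\zeta_{ij}$. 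Using the orientation identity $\int_\Pi f\,\iota_{e_k}\Omega = \int_\Pi f\,N_k\,d\sigma$, this yields
\begin{equation*}
\int_\Pi d\zeta = \sum_{i<j}\int_\Pi \bigl(N_i\,\partial_j\zeta_{ij}-N_j\,\partial_i\zeta_{ij}\bigr)\,d\sigma.
\end{equation*}

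Second, I would invoke \eqref{21}. Since the density $-\omega_k$ of the absolutely continuous part of $\mu_k$ is constant, its distributional partial derivatives on $\bt^d$ vanish, so \eqref{21} reduces to $\partial(N_k\sigma)/\partial x_r = \partial(N_r\sigma)/\partial x_k$. Pairing with a test function $\phi$ and integrating by parts yields $\int_\Pi(N_r\,\partial_k\phi - N_k\,\partial_r\phi)\,d\sigma = 0$ for $k\neq r$. Setting $\phi = \zeta_{ij}$ and $(r,k)=(i,j)$ kills every summand in the expression above, giving $\int_\Pi d\zeta = 0$ and hence $\partial\Pi = 0$.

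The main obstacle will be the sign bookkeeping in the first step: verifying the interior-product identities above and matching them against the authors' orientation convention $N(x) = \pm v_r^*/\|v_r^*\|$, whose sign tracks $\operatorname{sgn} q_r$, so that the antisymmetric integrand on which \eqref{21} acts actually arises in $\int_\Pi d\zeta$. This computation is routine but fiddly; once settled, the result is a direct consequence of Stokes' theorem and the symmetry of mixed partials encoded in \eqref{21}. A purely combinatorial alternative---showing that the boundary $(d-2)$-facets of the individual $\Pi_{rj}$ cancel in pairs when grouped by base-point $m\alpha$ and index pair $\{r,s\}$---is possible but duplicates the same sign analysis without exploiting the abstraction afforded by \lemref{lem3.3}.
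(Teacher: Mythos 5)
Your proposal is correct and follows essentially the same route as the paper's proof: boundarylessness is tested against smooth $(d-2)$-forms, Stokes' theorem reduces the claim to $\int_\Pi d\zeta=0$, the identity $\int_\Pi f\,\iota_{e_k}\Omega=\int f N_k\,d\sigma$ is exactly the paper's formula \eqref{25} in interior-product notation, and the conclusion comes from the symmetry \eqref{21}. Your observation that the absolutely continuous part of $\mu_k$ drops out because $\omega_k$ is constant is just an equivalent phrasing of the paper's remark that the partial derivatives of the test function have zero mean, so no genuinely new argument is involved.
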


This means that the oriented boundaries of the parallelepipeds $\{\Pi_{rj}\}$ fit together in such a way
that the resulting hypersurface $\Pi$ has no boundary. In the language of homology theory this means that
$\Pi$ is a ``cycle''. \lemref{lem3.4} may be verified directly, using the explicit definition
\eqref{eq:pirj} of the parallelepipeds $\{\Pi_{rj}\}$ constituting $\Pi$. However, the proof below shows
that this is a more general fact which follows from \eqref{21}.

\begin{proof}[Proof of \lemref{lem3.4}]
We will prove the assertion by showing that
$$\int_{\partial \Pi} \eta=0$$
for every smooth differential $(d-2)$-form $\eta$ on $\mathbb T^d.$

Observe first that if $\phi$ is a smooth $(d-1)$-form given by
$$\phi=\varphi(x)dx_1\wedge\cdots d\hat{x}_m\dots\wedge dx_d,$$
where $d\hat{x}_m$ means that the term $dx_m$ is omitted, then
\begin{equation}\label{25}
\int_\Pi \phi=(-1)^{m-1}\int_{\mathbb T^d} \varphi(x) N_m(x) d\sigma(x),
\end{equation}
where $N_m(x)$ is the $m$'th coordinate of the vector $N(x)$.

Now suppose that $\eta$ is a smooth $(d-2)$-form. By Stokes' theorem
\begin{equation}\label{26}
\int_{\partial \Pi}\eta=\int_\Pi d\eta,
\end{equation}
so it will be enough to prove that the right-hand side of \eqref{26} vanishes. It is enough
to consider the case when
\begin{equation}\label{27}
\eta=\varphi(x)dx_1\wedge\cdots d\hat{x}_k\cdots d\hat{x}_m\cdots\wedge dx_d
\end{equation}
where $1\le k<m\le d,$ since any $(d-2)$-form may be expressed as the sum of basic $(d-2)$-forms as in \eqref{27}. In this case, we have
$$d\eta=(-1)^m \frac{\partial \varphi}{\partial x_m}dx_1\wedge\cdots d\hat{x}_k\cdots\wedge dx_d-(-1)^k \frac{\partial \varphi}{\partial x_k}dx_1\wedge\cdots d\hat{x}_m\cdots\wedge dx_d,$$
and hence by \eqref{25} we get
\begin{align*}
\int_\Pi d\eta=(-1)^{k+m} \left\{\int_{\mathbb T^d}\frac{\partial\varphi}{\partial x_k}N_m \, d\sigma-\int_{\mathbb T^d}\frac{\partial\varphi}{\partial x_m}N_k \, d\sigma\right\}.
\end{align*}
By \eqref{18} and since the partial derivatives of $\varphi$ have zero integral on $\bt^d$, this implies
\begin{align*}
\int_\Pi d\eta
= (-1)^{k+m} \left\{\int_{\mathbb T^d}\frac{\partial\varphi}{\partial x_k}d\mu_m-\int_{\mathbb T^d}\frac{\partial\varphi}{\partial x_m}d\mu_k\right\}
=(-1)^{k+m} \big\langle \frac{\partial \mu_k}{\partial x_m}-\frac{\partial\mu_m}{\partial x_k},\varphi\big\rangle
\end{align*}
which vanishes according to \eqref{21} and so yields the desired conclusion.
\end{proof}

Let $\gamma$ be a smooth oriented curve in $\mathbb T^d$, connecting two points $a$ and $b$ lying outside of the hypersurface $\Pi$. We assume that $\gamma$ is in ``general position'' in the sense that it intersects $\Pi$ only at interior points of the parallelepipeds $\{\Pi_{rj}\}$, and that at each intersection point the curve $\gamma$ is transversal to $\Pi$. To each point $x$ in $\Pi\cap\gamma$ one can then associate a sign $+1$ if $\gamma$ crosses $\Pi$ in the direction of the normal vector $N(x),$ or $-1$ if $\gamma$ crosses $\Pi$ in the opposite direction. The sum of all these signs is called the \emph{intersection number} of $\Pi$ and $\gamma$ and will be denoted by $\#(\Pi\cdot\gamma)$.

\begin{lemma}\label{lem3.5}
If $\gamma$ is a closed, oriented curve in $\mathbb T^d$ (in general position) then
\begin{equation}\label{30}
\#(\Pi\cdot\gamma)=\int_\gamma\omega_1dx_1+\dots+\omega_dx_d
\end{equation}
where $\omega=(\omega_1,\dots,\omega_d)$ is the vector from \eqref{18}.
\end{lemma}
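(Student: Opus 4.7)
The plan is to show that both sides of \eqref{30} depend only on the homology class $[\gamma]\in H_1(\bt^d;\bz)\cong\bz^d$, and then verify the identity on a basis of $H_1$, namely the coordinate loops $\gamma_k\colon t\mapsto x_0+te_k$, $t\in[0,1]$, $k=1,\dots,d$, based at a generic point $x_0$.

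For the right-hand side, the $1$-form $\omega_1 dx_1+\cdots+\omega_d dx_d$ has constant coefficients, hence is closed, so $\int_\gamma\omega_1 dx_1+\cdots+\omega_d dx_d$ depends only on $[\gamma]$. For the left-hand side, \lemref{lem3.4} gives $\partial\Pi=0$, so if $\gamma=\partial F$ for some smooth $2$-chain $F$ in general position with $\Pi$, the standard intersection-versus-boundary identity yields $\#(\Pi\cdot\partial F)=\pm\#(\partial\Pi\cdot F)=0$. Thus $\#(\Pi\cdot\gamma)$ is a homology invariant as well, and both sides define homomorphisms $H_1(\bt^d)\to\br$; it suffices to compare them on each $\gamma_k$.

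For the loop $\gamma_k$ the right-hand side evaluates trivially to $\omega_k$. For the left-hand side I would apply the coarea (projection) formula to the map $\pi\colon\bt^d\to\bt^{d-1}$ obtained by omitting the $k$-th coordinate. The fibers of $\pi$ are the loops $\gamma_k^{(b)}$, $b\in\bt^{d-1}$; restricted to $\Pi$, the map $\pi$ has Jacobian $|N_k(x)|$, and a transverse intersection of $\gamma_k^{(b)}$ with $\Pi$ at $x$ carries sign $\operatorname{sgn}(N_k(x))$. Summing these contributions yields
\begin{equation*}
\int_{\bt^{d-1}}\#(\Pi\cdot\gamma_k^{(b)})\,db \;=\; \int_\Pi \operatorname{sgn}(N_k)\cdot|N_k|\,d\sigma \;=\; \int_\Pi N_k(x)\,d\sigma(x).
\end{equation*}
By the homology invariance already established, $\#(\Pi\cdot\gamma_k^{(b)})$ does not depend on $b$, so the left side equals $\#(\Pi\cdot\gamma_k)$. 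On the other hand, integrating the $k$-th component of the vector-valued measure \eqref{18} over $\bt^d$ gives $\int d\mu_k=\int_\Pi N_k\,d\sigma-\omega_k$, which vanishes by \eqref{20}. Therefore $\#(\Pi\cdot\gamma_k)=\omega_k$, as desired.

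The main obstacle will be to justify rigorously the intersection-versus-boundary identity for the piecewise-linear $(d-1)$-chain $\Pi$ (rather than a smooth submanifold), and to apply coarea across its singular locus; both can be handled by working throughout in general position (perturbing $F$ or the base point $x_0$ if needed) and exploiting the explicit description of the faces $\Pi_{rj}$ in \eqref{eq:pirj}.
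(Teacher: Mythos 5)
Your proposal is correct and follows essentially the same route as the paper: both reduce via homology invariance of the two sides (using that $\Pi$ is a cycle, by Lemma~\ref{lem3.4}) to the coordinate loops, then compute $\#(\Pi\cdot\tau_k)$ by averaging the intersection count over all parallel translates of the loop — your coarea/fibration argument is the same computation the paper performs with $\int_{\bt^d}\#(\Pi\cdot(x+\tau_k))\,dx=\int_{\bt^d}N_k\,d\sigma$ — and finally identify this with $\omega_k$ via \eqref{20}. No substantive difference.
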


\begin{proof}
Suppose that the lifting to $\mathbb R^d$ of the curve $\gamma$ is connecting the point $a$ to the point $b$, which (since $\gamma$ is closed) necessarily satisfy $b-a\in\mathbb Z^d.$ Since the hypersurface $\Pi$ is closed, the number $\#(\Pi\cdot\gamma)$ depends on the difference $b-a$ only
(this is a well-known fact in homology theory, that the intersection number depends only on the
homology classes of $\Pi$ and $\gamma$, see e.g.\ \cite[Section 0.4]{grhar}). The same is true for the right-hand side of \eqref{30}, which is equal to $\langle b-a,\omega\rangle$.

Since both sides of \eqref{30} are additive with respect to concatenation of curves, it will be enough to prove \eqref{30} in the case when $\gamma$ is one of the ``basic cycles''
$$\tau_k: t\mapsto t  e_k,\quad 0\le t\le 1,$$
where $e_1,\dots,e_d$ denote the standard basis vectors in $\mathbb R^d.$ Let
$x+\tau_k$ denote the curve
$$t\mapsto x+t  e_k,\qquad 0\le t\le 1,$$
obtained by translating $\tau_k$ along $x.$ We then have
$$\#(\Pi\cdot(x+\tau_k))=\#(\Pi\cdot\tau_k),\quad x\in\mathbb T^d ,$$
and hence
$$\#(\Pi\cdot\tau_k) =\int_{\mathbb T^d}\#(\Pi\cdot(x+\tau_k)) dx=\int_{\mathbb T^d}  N_k(x) \, d\sigma(x),$$
where $N_k(x)$ is the $k$'th coordinate of the vector $N(x)$. By \eqref{18} and \eqref{20}, we have
$$\int_{\mathbb T^d} N_k(x)\, d\sigma(x)=\int_{\mathbb T^d} \omega_k\, dx=\omega_k=\int_{\tau_k}\omega_1dx_1+\cdots+\omega_ddx_d.$$
This establishes \eqref{30} for $\gamma=\tau_k$ $(1\leq k\leq d)$ and thus proves the lemma.
\end{proof}

\begin{remark}
As the proof shows, we have $\omega_k=\#(\Pi\cdot\tau_k)$, and hence
$\omega$ is a vector in $\mathbb Z^d$ representing the hypersurface $\Pi$ in the $(d-1)$'th homology group of $\bt^d$.
\end{remark}

\subsection{Construction of the transfer function}
Consider a function $g(x)$ defined on $\mathbb T^d\setminus \Pi$ by
\begin{equation}\label{31}
g(x)=\#(\Pi\cdot\gamma(x))-\int_{\gamma(x)}\omega_1dx_1+\cdots +\omega_ddx_d,
\end{equation}
where $\gamma(x)$ is any curve (in general position) connecting some fixed point $x_0$ to the point $x.$
It follows from \lemref{lem3.5} that the right-hand side of \eqref{31} does not depend on the particular choice of the curve $\gamma(x).$
The function $g(x)$ is linear on each connected component of $\mathbb T^d\setminus \Pi$ (and in particular is continuous there), and presents a jump discontinuity on $\Pi.$

\begin{lemma}\label{lem3.6}
We have
$$\frac{\partial g}{\partial x_k}=\mu_k\quad (1\le k\le d)$$
in the sense of distributions.
\end{lemma}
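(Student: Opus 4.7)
The plan is to verify $\partial g / \partial x_k = \mu_k$ by pairing both sides against an arbitrary test function $\phi \in C^\infty(\bt^d)$, using the fact that $g$ defined by \eqref{31} is piecewise smooth on $\bt^d \setminus \Pi$ with only a jump discontinuity across $\Pi$.

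First I would establish the local structure of $g$. On each connected component $C$ of $\bt^d \setminus \Pi$, the intersection number $\#(\Pi\cdot\gamma(x))$ is locally constant, and the line integral $\int_{\gamma(x)} \omega_1 dx_1 + \cdots + \omega_d dx_d$, computed along a straight path in a local chart, is an affine function of $x$ with gradient $\omega$. Consequently $g|_C$ is affine with classical gradient $-\omega$, so $\partial g / \partial x_k = -\omega_k$ pointwise on $\bt^d \setminus \Pi$. Next I would analyze the jump of $g$ across $\Pi$. At any point $x$ lying in the interior of a single parallelepiped $\Pi_{rj}$ and outside all the other pieces (the set of non-generic points lies in a finite union of lower-dimensional subpolytopes and hence carries zero $\sigma$-measure), a transversal crossing in the direction $N(x)$ increases $\#(\Pi\cdot\gamma(x))$ by exactly $+1$ while the line integral remains continuous. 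Hence $g$ has a jump of magnitude $+1$ in the direction of $N(x)$; at points where several pieces overlap, the jumps simply add up with signs coming from the individual normals.

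With these two ingredients, I would invoke the divergence theorem on each component $C$ to obtain
\begin{equation*}
-\int_C g\,\partial_k\phi\,dx \;=\; -\omega_k\int_C \phi\,dx \;-\; \int_{\partial C} g\,\phi\,\nu_{C,k}\,dS,
\end{equation*}
where $\nu_C$ is the outward unit normal to $C$. Summed over $C$, the interior contributions yield $-\omega_k\int_{\bt^d}\phi\,dx$. For the boundary contributions, each regular facet of some $\Pi_{rj}$ is the common boundary of two adjacent components $C_\pm$ with $\nu_{C_\pm,k} = \mp N_k$ and $g|_{C_+} - g|_{C_-} = +1$, so the two sides combine to $-N_k$ on that facet; summing over all the pieces of $\Pi$, with multiplicities encoded in $N(x)\,d\sigma(x)$, produces $-\int_\Pi \phi\,N_k\,d\sigma$. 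Putting everything together,
\begin{equation*}
\langle \partial_k g, \phi \rangle \;=\; -\int_{\bt^d} g\,\partial_k\phi\,dx \;=\; -\omega_k\int_{\bt^d}\phi\,dx \;+\; \int_\Pi \phi\,N_k\,d\sigma \;=\; \int_{\bt^d} \phi\,d\mu_k,
\end{equation*}
in view of the explicit form \eqref{18} of $\mu_k$, which is the desired identity.

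The main obstacle I anticipate is the geometric complexity of $\Pi$: the parallelepipeds $\Pi_{rj}$ may intersect, overlap, and self-intersect, so the connected components of $\bt^d \setminus \Pi$ and the local jumps of $g$ demand careful bookkeeping. The decisive observation is that the non-generic stratum (boundaries of the individual $\Pi_{rj}$ and their mutual transversal intersections) is a finite union of subpolytopes of codimension at least one within $\Pi$, hence of $\sigma$-measure zero, while multiplicities at overlapping pieces are consistently tracked by the built-in multiplicities both of $g$'s jump structure and of the vector-valued measure $N(x)\,d\sigma(x)$.
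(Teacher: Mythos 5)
Your proof is correct in substance and rests on the same two facts as the paper's argument: off $\Pi$ the function $g$ is locally affine with gradient $-\omega$, and across each sheet of $\Pi$ it jumps by $+1$ in the direction of $N(x)$ (with multiplicities adding where sheets overlap), so that integration by parts yields exactly $N_k\,d\sigma-\omega_k\,dx=\mu_k$. The difference is in the execution: you argue globally, decomposing $\bt^d\setminus\Pi$ into connected components and summing divergence-theorem identities over them, whereas the paper exploits the fact that a distributional identity can be verified locally: it fixes a point $a$, takes a small ball $U$ in which $\Pi$ splits into finitely many non-self-intersecting sheets $\Pi^{(i)}$, writes $g=\sum_i \1_{U_+^{(i)}}-\langle x,\omega\rangle+c$ on $U$, and applies the divergence theorem to each $U_+^{(i)}$ against test functions supported in $U$. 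That localization is precisely what spares the paper the geometric bookkeeping you must do by hand. One point in your write-up needs repair: in the nontrivial cases $\omega\neq 0$, so $\Pi$ represents a nonzero class in $H_{d-1}(\bt^d)$ and typically does \emph{not} separate the torus; a facet of some $\Pi_{rj}$ need not be ``the common boundary of two adjacent components $C_\pm$'' --- both sides may belong to one and the same component, for which $\Pi$ is an internal slit (already the basic example $\Pi=\{x_1=0\}$ in $\bt^d$ shows this). The computation still goes through if you formulate the divergence theorem on such polyhedral domains with the slit counted twice and with one-sided boundary traces of $g$; your jump relation $g_+-g_-=+1$ is then exactly what makes the two traces combine to $-N_k\phi$ on that facet. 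But as literally written, the pairing of each facet with two distinct components is not available, and the regularity of the components' boundaries (slits, overlaps with multiplicity) would need explicit justification --- issues the paper's ball-by-ball argument avoids entirely.
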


\begin{proof}
It is enough to prove that given any point $a\in\mathbb T^d$, there is an open neighborhood $U$ of $a$ such that
\begin{equation}\label{35}
\frac{\partial g}{\partial x_k}=\mu_k \quad \text{in $U$.}
\end{equation}
If we choose $U$ to be a sufficiently small ball around the point $a$, then the (possibly self-intersecting) hypersurface $\Pi$ can be represented in $U$ as a finite union of non self-intersecting hypersurfaces $\Pi^{(i)}$, such that
each one of the $\Pi^{(i)}$ divides $U$ into two connected components. We denote by 
$U_+^{(i)}$ the connected component of $U \setminus \Pi^{(i)}$ for which the normal vector 
$N(x)$ on $\Pi^{(i)}$ is inward-pointing, and by $U_-^{(i)}$ the other component, having $N(x)$ outward-pointing.
Thus, we have
$$g(x)=\sum_i{\1}_{U_+^{(i)}}(x)-\langle x,\omega\rangle+c,\quad x\in U$$
for an appropriate constant $c.$

Now let $\varphi$ be a smooth function with compact support contained in $U.$ Then
\begin{equation}\label{32}
\big\langle\frac{\partial g}{\partial x_k},\varphi\big\rangle=-\int_{\mathbb T^d}\frac{\partial\varphi}{\partial x_k}(x)  g(x)dx=-\sum_i\int_{U_+^{(i)}}\frac{\partial \varphi}{\partial x_k}(x)dx+\int_U\frac{\partial \varphi}{\partial x_k}(x)\langle x,\omega\rangle dx
\end{equation}
(the constant term $c$ does not appear since the integral of $\partial \varphi/\partial x_k$ over $U$ vanishes).
The part common to the support of $\varphi$ and to the boundary of $U_+^{(i)}$ lies in $\Pi^{(i)},$ and $N(x)$ is the inward-pointing normal to $U_+^{(i)}$ on this part. Hence, by the divergence theorem,
\begin{equation}\label{33}
-\int_{U_+^{(i)}}\frac{\partial \varphi}{\partial x_k}(x)\,dx=\int_{\Pi^{(i)}}\varphi(x) N_k(x)\, d\sigma(x),
\end{equation}
where $N_k(x)$ is the $k$'th coordinate of the vector $N(x)$.
Also, integration by parts yields
\begin{equation}\label{34}
\int_U\frac{\partial \varphi}{\partial x_k}(x) \langle x,\omega\rangle dx = - \omega_k \int_{\mathbb T^d}\varphi(x) \, dx,
\end{equation}
again since $\varphi$ is supported in $U$.
Combining \eqref{32}, \eqref{33} and \eqref{34}, we get 
\begin{equation}\label{34a}
\big\langle\frac{\partial g}{\partial x_k},\varphi\big\rangle=\int\varphi(x)d\mu_k(x).
\end{equation}
As this holds for any smooth function $\varphi$ with compact support contained in $U$, this proves \eqref{35}.
\end{proof}

\subsection{Conclusion of the proof of \thmref{thm:main1specific}}
We have thus constructed a Riemann integrable (and, in particular, bounded) function $g(x)$ satisfying
\begin{equation}\label{36}
\left(\frac{\partial g}{\partial x_1},\dots,\frac{\partial g}{\partial x_d}\right)=(\mu_1,\dots,\mu_d)
\end{equation}
in the sense of distributions.
Hence both sides of \eqref{36} have the same Fourier series. But the Fourier series of the left-hand side of \eqref{36} is
$$\sum_{\lambda\in\mathbb Z^d} 2 \pi i\lambda\widehat g(\lambda)e^{2\pi i\langle \lambda,x\rangle},$$
while the Fourier series of the right-hand side is the series \eqref{17}. This shows that we must have
$$\widehat g(\lambda)=c(\lambda),\qquad \lambda \in\mathbb Z^d\setminus\{0\},$$
and this completes the proof of \thmref{thm:main1specific}.
\qed

\subsection{Zonotopes of bounded remainder}
With \thmref{thm:main1specific}
established, we can now deduce Corollary \ref{cor:zonotope}, which says that any zonotope in $\br^d$ with vertices in $\bz \va + \bz^d$ is a bounded remainder set. Recall again that a zonotope is a convex polytope which can be represented as the Minkowski sum of several line segments.
Equivalently, a zonotope is a convex, centrally symmetric polytope with centrally symmetric $k$-dimensional faces for $2 \leq k \leq  d-1$
(see e.g.\ \cite[Section 7.3]{ziegler}). Thus in $\br^2$ the zonotopes are the convex, centrally symmetric polygons. 

\begin{proof}[Proof of Corollary \ref{cor:zonotope}]
It is known that any zonotope $S$ can be tiled by a finite number of {\piped}s, and one can see from the proof that if the vertices of $S$ lie in $\bz \va + \bz^d$, then all the {\piped}s in this tiling are spanned by vectors in $\bz \va + \bz^d$ (\cite{shephard}, see also \cite[Theorem 2.48, p. 50]{concini}). Thus by Theorem \ref{thm:main1}, each of these {\piped}s is a BRS. Since the union of finitely many disjoint (up to measure zero) bounded remainder sets is also a BRS, Corollary \ref{cor:zonotope} follows.
\end{proof}

\subsection{Parallelepipeds of bounded remainder with pre-given measure}\label{sec:simplepiped}
Recall (Proposition \ref{prop:mesgiven}) that the measure $\gamma$ of any BRS must be of the form 
\begin{equation}
\gamma = n_0 + n_1 \alpha_1 + \cdots + n_d \alpha_d \quad (n_j \in \bz). 
\label{eq:themes}
\end{equation}
Here we prove Corollary \ref{cor:main1anymes}, which states that conversely, for any positive number $\gamma$ of the form \eqref{eq:themes} there exists a {\piped} $P$ of bounded remainder, spanned by vectors belonging to $\bz \va + \bz^d$, with $\mes P = \gamma$. Moreover, we show that if $\gamma \leq 1$ then $P$ can be chosen to be a simple set.

As Theorem \ref{thm:main1} is already proved, it remains only to show
\begin{proposition}
\label{prop:Pgivenmes}
For any positive number $\gamma$ of the form \eqref{eq:themes} there is a {\piped} $P$ in $\br^d$, spanned by vectors $v_1, \ldots , v_d $ belonging to $\bz \va + \bz^d$, such that $\mes P = \gamma$. If $\gamma \leq 1$, then $P$ may be chosen to be a simple set.
\end{proposition}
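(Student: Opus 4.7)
The plan is to parametrize the spanning vectors as $v_k = q_k \va + p_k$ with $q_k \in \bz$ and $p_k \in \bz^d$. Letting $M = (p_1 \mid \cdots \mid p_d) \in M_d(\bz)$, $Q = (q_1, \ldots, q_d)$, and viewing $V = M + \va Q$ (so $\va$ is a column, $Q$ a row), the matrix determinant lemma gives
\[
\det V = \det M + \langle \adj(M)^T Q^T, \va\rangle.
\]
Since $1, \alpha_1, \ldots, \alpha_d$ are rationally independent, realizing $\det V = \gamma = n_0 + \langle n, \va\rangle$ reduces to finding integer $M$ and $Q$ with $\det M = n_0$ and $\adj(M)^T Q^T = n$.

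If $n = 0$, I take $P$ spanned by $n_0 e_1, e_2, \ldots, e_d$ with measure $n_0 = \gamma$; when $\gamma \le 1$ we must have $\gamma = 1$ and $P$ is the unit cube, which is simple. If $n \neq 0$, write $n = g m'$ with $g = \gcd(n_1, \ldots, n_d) > 0$ and $m'$ primitive. Take $Q = (g, 0, \ldots, 0)$, so $\adj(M)^T Q^T$ is $g$ times the vector $w$ characterized by $\det(v, p_2, \ldots, p_d) = \langle v, w\rangle$ for all $v$. Choose $p_2, \ldots, p_d$ to be a basis of $L := (m')^\perp \cap \bz^d$ oriented so that $w = m'$; then $\adj(M)^T Q^T = g m' = n$. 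By B\'ezout pick $u \in \bz^d$ with $\langle u, m'\rangle = 1$ and set $p_1 = n_0 u$, giving $\det M = \langle p_1, w\rangle = n_0$. The resulting $v_1 = g\va + n_0 u$ and $v_k = p_k$ for $k \ge 2$ all lie in $\bz\va + \bz^d$ and span a parallelepiped of measure $\gamma$. For the simplicity assertion when $\gamma \le 1$, I verify $(P - P) \cap \bz^d = \{0\}$: if $x = t_1 v_1 + \sum_{k \ge 2} t_k v_k \in \bz^d$ with $|t_k| < 1$, then applying $\langle \cdot, m'\rangle$ yields $t_1 \gamma = \langle x, m'\rangle \in \bz$ (using $\langle v_k, m'\rangle = 0$ for $k \ge 2$ and $\langle v_1, m'\rangle = g\langle \va, m'\rangle + n_0 = \gamma$), so $|t_1\gamma| < \gamma \le 1$ forces $t_1 = 0$; then $x \in L$ has coefficients $|t_k| < 1$ in a basis of $L$, forcing every $t_k = 0$.

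The main obstacle is the claim that for primitive $m'$, some basis of $L$ has cross product \emph{exactly} $\pm m'$ (not merely parallel to $m'$ with the wrong magnitude). I would prove this via covolumes: since $m'$ is primitive, $\langle \cdot, m'\rangle : \bz^d \to \bz$ is surjective with kernel $L$, so $\bz^d/L \cong \bz$ with $m'$ mapping to $\|m'\|^2$, giving $[\bz^d : \bz m' \oplus L] = \|m'\|^2$ and hence $\det(m', p_2, \ldots, p_d) = \pm\|m'\|^2$ for any basis $p_2,\dots,p_d$ of $L$. Since $w$ is orthogonal to $L$, $w$ is a scalar multiple of $m'$, and combined with $\langle m', w\rangle = \pm\|m'\|^2$ this forces $w = \pm m'$; two basis vectors can be swapped if needed to achieve the sign $w = m'$.
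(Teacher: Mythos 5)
Your construction is correct, and in the end it produces essentially the same parallelepiped as the paper: writing $\gamma = n_0 + g\ip{m'}{\va}$ with $m'$ primitive, both arguments span $P$ by a $\bz$-basis of the lattice $L=(m')^{\perp}\cap\bz^d$ together with one vector of the form $g\va + n_0 u$ where $u\in\bz^d$ satisfies $\ip{u}{m'}=1$, and the simplicity argument for $\gamma\le 1$ (pair a point of $(P-P)\cap\bz^d$ with $m'$ to kill the coefficient of $g\va+n_0u$, then use integrality of the remaining coordinates) is the same in both. The genuine difference is how the adapted integer data is produced and how the determinant is evaluated. The paper completes the primitive vector to a unimodular integer matrix $(m_1,\ldots,m_d)$ with $m_d=m'$ (a standard fact quoted from Cassels) and passes to the biorthogonal basis $p_1,\ldots,p_d$; then $\det(p_1,\ldots,p_{d-1},\va)=\ip{\va}{m'}$ drops out of the expansion $\va=\sum_j\ip{\va}{m_j}p_j$, with no cross products in sight, and $p_1,\ldots,p_{d-1},p_d$ are precisely a basis of $L$ plus your $u$. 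You instead use the matrix determinant lemma and prove, as a separate lemma via the index computation $[\bz^d:\bz m'\oplus L]=\|m'\|^2$ combined with $w\perp L$, that a $\bz$-basis of $L$ has generalized cross product $\pm m'$; this is correct and makes the proof self-contained, at the cost of that extra lemma, which is essentially equivalent to the unimodular-completion fact the paper cites. Two trivial points to patch: for $d=2$ the lattice $L$ has a single generator, so you cannot \emph{swap} two basis vectors to fix the sign of $w$ --- replace a generator by its negative instead, which works in every dimension; and for $d=1$ (where $L$ is trivial) your construction for $n\neq 0$ is vacuous as written, but that case is immediate by taking $v_1=\gamma$, as the paper also notes.
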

\begin{proof}
We assume $d  \geq 2$ (the one-dimensional case is trivial). Write $\gamma= q\ip{\va}{m} + r$, where $q,r \in \bz$ and $m \in \bz^d$ is a nonzero vector such that the gcd of its nonzero entries is $1$. Then one can choose vectors $m_1, \ldots, m_d \in \bz^d$ satisfying $\det(m_1, \ldots , m_d) = 1$ and such that $m_d=m$ (see e.g.\ \cite[Corollary 4, p.\ 14]{cassels}). Let $p_1, \ldots , p_d$ be the biorthogonal system satisfying $\ip{m_i}{p_j} = \delta_{ij}$. Then we have $p_1, \ldots , p_d \in \bz^d$ and $\det (p_1, \ldots , p_d) = 1$. Let $P$ be the {\piped} spanned by the vectors $p_1, \ldots, p_{d-1}$ and $q \va + rp_d$. Since $\va=\sum_{j=1}^d \ip{\va}{m_j} p_j$ it follows that $\det (p_1, \ldots , p_{d-1}, \va) = \ip{\va}{m}$, and hence
\begin{equation*}
\mes P = \det (p_1, \ldots , p_{d-1}, q \va + rp_d) = q \ip{\va}{m} + r  = \gamma.
\end{equation*}

Now suppose that $\gamma \leq 1$. Observe that a set $S$ is simple if and only if $(S-S) \cap \bz^d = \{ 0 \}$. A point $z \in P-P$ is of the form 
\begin{equation*}
z = \sum_{j=1}^{d-1} t_j p_j + t_d(q \va + rp_d), \quad -1 < t_j < 1 .
\end{equation*}
If, in addition, $z \in \bz^d$, then
\begin{equation*}
\bz \ni \ip{z}{m_d} =  t_d \ip{q\va+rp_d}{m}  =  t_d \gamma,
\end{equation*}
and since $0 < \gamma \leq 1$, it follows that $t_d = 0$. Similarly, for each $j=1, \ldots , d-1$
we have $t_j = \ip{z}{m_j} \in \bz$, and thus $t_j = 0$. We conclude that $z=0$. Hence, $P$ is simple. 
\end{proof}

\subsection{Generalization of Theorem \ref{thm:main1}}
\label{sec:genpipedstate}
Actually we can give a somewhat more general construction of bounded remainder {\piped}s, that can be formulated as follows.

\begin{theorem}
\label{thm:genpiped}
Let $v_1, \ldots , v_d \in \bz \va + \bz^d$, and let $P$ be a {\piped} spanned by vectors $w_1, \ldots , w_d$ satisfying
\[
w_1 = v_1, \quad w_k \in v_k + \spann \{ v_1, v_2, \dots, v_{k-1} \} \quad (2\leq k \leq d).
\]
Then $P$ is a BRS. 
\end{theorem}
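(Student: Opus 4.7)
The plan is to reduce \thmref{thm:genpiped} to \thmref{thm:main1} by showing that $P = P(w_1, \ldots, w_d)$ is equidecomposable to the canonical {\piped} $P_0 = P(v_1, \ldots, v_d)$ using translations by vectors in $\bz\va + \bz^d$ only, and then invoking the principle that such an equidecomposition preserves the bounded remainder property. The preservation principle is straightforward: if a BRS $S$ is partitioned as $\bigsqcup_i S_i$ into finitely many measurable pieces and the pieces are reassembled into $S' = \bigsqcup_i (S_i + u_i)$ with $u_i \in \bz\va + \bz^d$, then writing $u_i = q_i\va + p_i$ with $q_i \in \bz$ and $p_i \in \bz^d$, the integer vector $p_i$ acts trivially on $\bt^d$ while the shift by $q_i\va$ contributes to the discrepancy sum a correction bounded in absolute value by a constant multiple of $|q_i|\sup \chi_{S_i}$, uniformly in $n$ and $x$; hence $S'$ is again a BRS.

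To build the equidecomposition I proceed by a sequence of \emph{elementary shears}, processing the indices $k$ in the order $k = d, d-1, \ldots, 2$. At step $k$ the current {\piped} has spanning vectors $(v_1, \ldots, v_{k-1}, v_k, w_{k+1}, \ldots, w_d)$, and I will replace its $k$-th spanning vector $v_k$ by $w_k = v_k + u_k$, where $u_k = \sum_{j=1}^{k-1} c_j v_j$ is expanded in the original basis. This is accomplished by a succession of elementary shears, one for each $j < k$, each replacing the current $k$-th vector by itself plus $c_j v_j$. The key point of this ordering is that every shearing direction is one of the \emph{original} vectors $v_j$, which by hypothesis lies in $\bz\va + \bz^d$.

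A single elementary shear relating $P(y_1, \ldots, y_d)$ to $P(y_1, \ldots, y_k + cy_j, \ldots, y_d)$, with $y_j \in \bz\va + \bz^d$ and $j \ne k$, admits the following concrete translation equidecomposition. In the coordinates $(t_1, \ldots, t_d)$ associated with the basis $\{y_i\}$, the sheared {\piped} is the set
\[
\left\{t \in \br^d : 0 \le t_i < 1 \text{ for } i \ne j,\; ct_k \le t_j < ct_k + 1\right\}.
\]
Partitioning the $t_k$-axis by the level sets $\{m \le ct_k < m+1\}$ for $m \in \bz$, and splitting each $t_j$-slice $[ct_k, ct_k + 1)$ at the point $t_j = m+1$, one obtains finitely many measurable pieces which, after translation respectively by $-m y_j$ and $-(m+1)y_j$, reassemble into the unsheared {\piped} $[0,1)^d$ (in $t$-coordinates). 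All translations involved are integer multiples of $y_j$, hence lie in $\bz\va + \bz^d$.

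The main obstacle is securing the correct ordering of the shears: at each elementary step the ``shearing direction'' must lie in $\bz\va + \bz^d$, not be one of the already-modified $w_j$ (which might not). Processing $k$ in \emph{descending} order is precisely what ensures this, because at step $k$ the lower-index vectors $v_1, \ldots, v_{k-1}$ available for shearing are still the originals. Combining the three steps above with \thmref{thm:main1} applied to $P_0$ then yields the conclusion.
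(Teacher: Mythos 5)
Your proposal is correct and takes essentially the same route as the paper: the paper also reduces to Theorem \ref{thm:main1} by showing $P$ is equidecomposable to $P_0=P(v_1,\dots,v_d)$ through a chain of elementary shears in the directions $v_j\in\bz\va+\bz^d$ (Lemma \ref{lem:genpiped}) and then invokes the preservation of the bounded remainder property under such equidecompositions (Proposition \ref{prop:shiftsubset}). Your only deviations are cosmetic: you reprove the preservation principle by a direct discrepancy count instead of transfer functions, and your single-shear decomposition handles an arbitrary shear parameter in one slicing step, whereas the paper treats $0<s\le 1$ with two pieces and iterates.
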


For example, this includes Sz\"{u}sz' construction
of bounded remainder parallelograms in two dimensions (\thmref{thm:szusz}). It can
be obtained by choosing two vectors $v_1 \in \bz \va + \bz^2$ and $v_2 \in \bz^2$, and taking $P$ to be the parallelogram
spanned by the vectors $w_1 = v_1$ and $w_2 = v_2+tv_1$, where $t \in \br$ is chosen such that the 
vector $w_2$ lies on the $x$-axis.

Since we derive \thmref{thm:genpiped} from Theorem \ref{thm:main1} using
the notion of equidecomposability, we postpone its proof to the next \sect\ref{sec:main2}.


\section{Equidecomposability of bounded remainder sets}\label{sec:main2}

\subsection{}
The main goal of this section is to obtain a characterization of the Riemann measurable bounded remainder 
sets in terms of equidecomposability. Recall that if $G$ is a group of motions of the space $\br^d$, then
two measurable sets $S$ and $S'$ are said to be \emph{$G$-equidecomposable} if the set $S$ can be partitioned 
into finitely many measurable subsets that can be reassembled by motions of the group $G$ to form, up to measure zero, a partition of $S'$.
If $S$ and $S'$ belong to a restricted class of sets, e.g.\ they are Riemann measurable sets, or they are polytopes, then we require the pieces of the partition to belong to the same class.

We first show that if two sets $S$ and $S'$ are equidecomposable with respect to the group of
translations by vectors in $\bz \va + \bz^d$, and if $S$ is a bounded remainder set, then so is $S'$.
This is basically known, see e.g.\ \cite[p. 277]{liardet}.

Then we prove the main result of this section, Theorem \ref{thm:main2}, which states that 
any two Riemann measurable bounded remainder sets of the same measure are equidecomposable
(by Riemann measurable pieces) using translations by vectors belonging to $\bz \va + \bz^d$ only.
The equidecomposition is constructed by an explicit iterative procedure.
In the case when the given sets are polytopes, this procedure yields an equidecomposition 
using pieces which are polytopes as well.

Combining this with Theorem \ref{thm:main1} and \corref{cor:main1anymes} thus yields
a characterization of the Riemann measurable bounded remainder sets,
in terms of equidecomposability to a {\piped} spanned by vectors in $\bz \va + \bz^d$
(Corollary \ref{cor:equidecomp}).

Finally, we show that every Riemann measurable BRS admits a Riemann integrable transfer function
(Theorem \ref{thm:Rtransfer}), and also give the proof of \thmref{thm:genpiped} stated above.

\subsection{}
We turn to the details, starting with the following
\begin{proposition}
\label{prop:shiftsubset}
Let $S$ and $S'$ be two bounded, measurable sets in $\br^d$. Suppose that $S$ and $S'$ 
are equidecomposable using only translations by vectors in $\bz \va + \bz^d$.
If $S$ is a bounded remainder set, then so is $S'$.
\end{proposition}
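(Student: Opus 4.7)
The plan is to reduce the BRS property of $S'$ to the existence of a bounded transfer function, via Proposition~\ref{prop:trnsequiv}. Writing the equidecomposition as $S = \bigsqcup_{i=1}^N S_i$ (up to measure zero) and $S' = \bigsqcup_{i=1}^N (S_i + \beta_i)$, with each $\beta_i = q_i \va + p_i$ for some $q_i \in \bz$ and $p_i \in \bz^d$, the translation by $p_i$ is invisible on the torus, so the multiplicity function of $S_i + \beta_i$ on $\bt^d$ equals $\chi_{S_i}(x - q_i \va)$. Starting from a bounded transfer function $g$ for $S$, the task is to construct a bounded function $g'$ on $\bt^d$ satisfying
\[
g'(x) - g'(x - \va) = \chi_{S'}(x) - \mes S' \quad \text{a.e.}
\]

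The key idea, already implicit in the proof of Proposition~\ref{prop:bversusa}, is that on $\bt^d$ the operation $u(x) \mapsto u(x - q\va) - u(x)$ is a bounded coboundary for \emph{any} bounded $u$, with an explicit bounded primitive given by a finite telescoping sum of iterates of $u$. Concretely, for each piece I would set
\[
h_i(x) := -\sum_{j=0}^{q_i - 1} \chi_{S_i}(x - j \va)
\]
when $q_i > 0$, the analogous shifted sum (with the opposite sign) when $q_i < 0$, and $h_i \equiv 0$ when $q_i = 0$. A short telescoping computation then gives $h_i(x) - h_i(x - \va) = \chi_{S_i}(x - q_i \va) - \chi_{S_i}(x)$, and each $h_i$ is bounded because $\chi_{S_i}$ is (here the boundedness of $S_i$ enters).

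Using the additivities $\chi_S = \sum_i \chi_{S_i}$ and $\chi_{S'}(x) = \sum_i \chi_{S_i}(x - q_i\va)$ on $\bt^d$, together with $\mes S' = \mes S$ (Lebesgue measure is translation invariant), I would then take $g' := g + \sum_{i=1}^N h_i$. Summing the cohomological equation for $g$ with the $N$ telescoping identities for the $h_i$ produces the desired cohomological equation for $g'$, and the latter is bounded as a finite sum of bounded functions. Proposition~\ref{prop:trnsequiv} then yields that $S'$ is a BRS.

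The main (rather mild) obstacle is resisting the tempting but incorrect strategy of arguing piece-by-piece: first showing each $S_i$ is itself a BRS, then each $S_i + \beta_i$ is a BRS, then using the fact that a finite union of BRS is a BRS. The flaw is that a piece of a BRS need not itself be a BRS. The correct perspective is that only the \emph{difference} $\chi_{S_i + \beta_i} - \chi_{S_i}$, not $\chi_{S_i}$ individually, is required to be a bounded coboundary, and this holds automatically as soon as $\beta_i \in \bz\va + \bz^d$.
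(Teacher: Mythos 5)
Your proof is correct and is essentially the paper's own argument: the same telescoping sums $\sum_{j=0}^{q_i-1}\chi_{S_i}(x-j\va)$ (up to sign convention) serve as bounded coboundaries for each piece, and adding them to the transfer function of $S$ yields a bounded transfer function for $S'$. Nothing further is needed.
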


\begin{proof}
We assume that $S$ may be partitioned into a finite number of measurable subsets $S_j$, and that each $S_j$
may be translated by a vector $\gamma_j \in \bz \va + \bz^d$, such that the translated sets $S_j' := S_j + \gamma_j$
form, up to measure zero , a partition of $S'$. We also assume that $S$ is a BRS,
so there is a function $g \in L^{\infty}(\bt^d)$ such that
\begin{equation*}
\chi_S(x) - \mes S = g(x) - g(x-\alpha) \quad \text{a.e.}
\end{equation*}
Write $\gamma_j = n_j \va + m_j$ where $n_j \in \bz$ and $m_j \in \bz^d$. If $n_j > 0$, observe that the function
\begin{equation*}
g_j(x) := \sum_{k=0}^{n_j-1} \chi_{S_j}(x - k\alpha) 
\end{equation*}
satisfies
\begin{equation}
\label{eq:diffsub}
\chi_{S_j}(x) - \chi_{S_j'}(x) = g_j(x) - g_j(x-\alpha).
\end{equation}
If $n_j < 0$ one can define a function $g_j$ satisfying \eqref{eq:diffsub} similarly, while in the case $n_j= 0$ we just take $g_j =0$.
Since $S$ and $S'$ have the same measure, it follows that the function $g'(x) := g(x) - \sum g_j(x)$ satisfies
\begin{equation}
\chi_{S'}(x) - \mes S' = g'(x) - g'(x-\alpha) \quad \text{a.e.,}
\end{equation}
that is, $g'$ is a bounded transfer function for $S'$. Hence, $S'$ is a BRS.
\end{proof}

\subsection{Auxiliary lemmas}
The next two lemmas will be needed
since we are working with sets in $\br^d$ which are not necessarily simple sets. 
\begin{lemma}
\label{lemma:subset}
Let $A \subset \br^d$ be a bounded, Riemann measurable set, and suppose that $\varphi : \bt^d \rightarrow \bz$ is a Riemann integrable function satisfying
\begin{equation}
\label{eq:subset}
0 \leq \varphi(x) \leq \chi_A(x), \quad \text{ a.e. } x \in \bt^d .
\end{equation}
Then there exists a Riemann measurable subset $A' \subset A$ for which $\chi_{A'} = \varphi$ a.e.
\end{lemma}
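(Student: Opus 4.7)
The plan is to construct $A'$ by choosing, at each point of $\bt^d$, exactly $\varphi(x)$ of the $\chi_A(x)$ available lifts of $x$ that lie in $A$, and then verify that this choice yields a Riemann measurable set.

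Since $A$ is bounded, only finitely many integer translates $k_1,\dots,k_N \in \bz^d$ satisfy $A\cap(Q+k_j)\neq\emptyset$, where $Q=[0,1)^d$. For each $x\in Q$, define the index set $T(x)=\{j : x+k_j\in A\}\subseteq\{1,\dots,N\}$, so that $|T(x)|=\chi_A(x)$. Given the hypothesis $0\le \varphi(x)\le\chi_A(x)$, a natural rule is to select the first $\varphi(x)$ elements of $T(x)$ (in the fixed ordering $1<2<\cdots<N$). Concretely, I would partition $Q$ into the finitely many level sets
\[
W_{T,v}=\{x\in Q : T(x)=T,\ \varphi(x)=v\},\qquad T\subseteq\{1,\dots,N\},\ 0\le v\le|T|,
\]
and define
\[
A'=\bigcup_{T,v}\bigcup_{\substack{j\in T\\ r_T(j)\le v}}\bigl(W_{T,v}+k_j\bigr),
\]
where $r_T(j)$ denotes the rank of $j$ in $T$. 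By construction $A'\subset A$, and summing over $j$ one checks that $\chi_{A'}(x)=v=\varphi(x)$ on each $W_{T,v}$, so $\chi_{A'}=\varphi$ a.e.

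The main obstacle is verifying that $A'$ is Riemann measurable, i.e.\ that each $W_{T,v}$ has measure-zero boundary. I would argue as follows: since $A$ is Riemann measurable, the indicator $\1_A$ is continuous a.e., and hence so is each translate $x\mapsto\1_A(x+k_j)$. Therefore the vector-valued function $x\mapsto(\1_A(x+k_1),\dots,\1_A(x+k_N))$ is continuous at almost every $x\in Q$. Because this function is integer-valued, continuity at a point is equivalent to being locally constant there, so every point of continuity is an interior point of exactly one of the sets $\{x:T(x)=T\}$. Consequently the boundary of each such set lies in the (measure-zero) discontinuity set. The same reasoning applies to the integer-valued Riemann integrable function $\varphi$, whose level sets $\varphi^{-1}(v)$ have measure-zero boundary. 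Intersecting gives that each $W_{T,v}$ has measure-zero boundary, and hence $A'$, a finite union of translates of such sets, is Riemann measurable. This completes the proof.
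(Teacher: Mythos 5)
Your proof is correct and follows essentially the same route as the paper: the paper's inductive definition $\1_{S_i} := \min \{ \varphi - \1_{S_1} - \cdots - \1_{S_{i-1}}, \1_{A_i} \}$ is exactly your rule of selecting the first $\varphi(x)$ available integer translates in a fixed ordering, just packaged via minima of indicator functions instead of the level sets $W_{T,v}$. Your explicit verification that the pieces are Riemann measurable (integer-valued functions are locally constant at continuity points, so level-set boundaries lie in the null discontinuity set) is a valid elaboration of a point the paper leaves implicit.
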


\begin{proof}
Consider $Q=[0,1)^d$ as a representative for $\bt^d$. Since $A$ is bounded, there exist distinct vectors $m_1, \ldots , m_M \in \bz^d$ such that $A= \bigcup (A_i + m_i)$, where
\begin{equation*}
A_i := (A-m_i) \cap Q , \quad 1 \leq i \leq M .
\end{equation*}
Restrict the function $\varphi$ to $Q$, and define sets $S_1, \ldots , S_M \subset Q$ by their indicator functions
\begin{equation*}
\1_{S_1} := \min \{ \varphi ,  \1_{A_1} \} , 
\end{equation*}
and by induction
\begin{equation*}
\1_{S_i} := \min \{ \varphi - \1_{S_1} - \ldots - \1_{S_{i-1}}, \1_{A_i} \}, \quad i=2, 3, \ldots , M .
\end{equation*}
It is easy to check, using \eqref{eq:subset}, that
\begin{equation}
\sum_{i=1}^{M} \1_{S_i}(x) = \varphi (x) \quad \text{ a.e. } x \in Q. 
\label{eq:phiequal}
\end{equation}
Hence 
$A' := \bigcup_{i=1}^M (S_i + m_i)$
is the union of disjoint sets contained in $A$, and it follows from \eqref{eq:phiequal} that we have $\chi_{A'}(x) = \varphi (x)$ a.e., as required.
\end{proof}

\begin{lemma}
\label{lemma:equalproj}
Let $A$ and $B$ be two bounded, Riemann measurable sets in $\br^d$ such that 
\begin{equation}
\chi_A(x) = \chi_B(x), \quad \text{ a.e. } x \in \bt^d .
\label{eq:equalproj}
\end{equation}
Then $A,B$ are equidecomposable (by Riemann measurable pieces) using only
translations by vectors in $\bz^d$.
\end{lemma}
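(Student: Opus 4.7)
The plan is to reduce to a finite combinatorial matching problem on the fundamental cube $Q = [0,1)^d$. Since $A$ is bounded, there are finitely many distinct vectors $m_1,\dots,m_M \in \bz^d$ for which $A_i := (A - m_i) \cap Q$ is nonempty, and $A = \bigsqcup_{i=1}^M (A_i + m_i)$ is a disjoint union of Riemann measurable subsets of $Q$ translated by integers. Similarly I would write $B = \bigsqcup_{j=1}^N (B_j + n_j)$ with $B_j \subset Q$ Riemann measurable. Restricted to $Q$, the hypothesis $\chi_A = \chi_B$ a.e.\ becomes the pointwise identity
\[
\sum_{i=1}^M \1_{A_i}(x) \;=\; \sum_{j=1}^N \1_{B_j}(x) \qquad \text{a.e.\ } x \in Q.
\]

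Next, I would stratify $Q$ by the ``type'' of each point, meaning by which of the $A_i$ and $B_j$ contain it. For each pair of subsets $I \subset \{1,\dots,M\}$ and $J \subset \{1,\dots,N\}$, set
\[
Q_{I,J} := \Big(\bigcap_{i \in I} A_i\Big) \cap \Big(\bigcap_{i \notin I}(Q \setminus A_i)\Big) \cap \Big(\bigcap_{j \in J} B_j\Big) \cap \Big(\bigcap_{j \notin J}(Q \setminus B_j)\Big).
\]
These Boolean combinations of Riemann measurable sets are themselves Riemann measurable, because the boundary of a Boolean combination is contained in the union of the boundaries of its constituents; and the family $\{Q_{I,J}\}$ partitions $Q$. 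The pointwise identity above forces $\mes Q_{I,J} = 0$ whenever $|I| \neq |J|$, since on $Q_{I,J}$ the two sides of the identity equal $|I|$ and $|J|$ respectively.

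Finally, for every $(I,J)$ with $|I|=|J|$ I would fix an arbitrary bijection $\pi_{I,J}\colon I \to J$, and for each $i \in I$ translate the piece $Q_{I,J} + m_i \subset A_i+m_i$ by the integer vector $n_{\pi_{I,J}(i)} - m_i \in \bz^d$, mapping it to $Q_{I,J} + n_{\pi_{I,J}(i)} \subset B_{\pi_{I,J}(i)}+n_{\pi_{I,J}(i)}$. Two bookkeeping checks complete the argument: for fixed $i$ the sets $\{Q_{I,J}+m_i : I \ni i,\ |I|=|J|\}$ exhaust $A_i + m_i$ up to measure zero, so the full family is a Riemann measurable partition of $A$ modulo null sets; on the target side, for fixed $(I,J)$ the translates $\{Q_{I,J} + n_{\pi_{I,J}(i)}\}_{i \in I}$ coincide as a collection with $\{Q_{I,J} + n_j\}_{j \in J}$ because $\pi_{I,J}$ is a bijection, and reassembling over all $(I,J)$ recovers $B$ up to measure zero.

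There is no serious analytic obstacle here; the only substantive point — and the reason the construction works at all — is that the pointwise multiplicity identity forces $|I(x)| = |J(x)|$ a.e., which is exactly what is needed to guarantee that a bijection between the relevant indices exists on each stratum $Q_{I,J}$. Riemann measurability is inherited automatically from the Boolean construction, and every translation vector lies in $\bz^d$ by design.
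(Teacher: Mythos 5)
Your argument is correct, and it reaches the conclusion by a somewhat different mechanism than the paper. The paper's proof is sequential: it peels off $A^i = A \cap (Q+m_i)$ one cube at a time, invokes the preceding extraction lemma (\lemref{lemma:subset}) to find a subset $B^i \subset B$ with $\chi_{B^i} = \chi_{A^i}$ a.e., subtracts, and only afterwards refines each $A^i$ according to which cube $Q+m_j$ the matched part of $B^i$ sits in, yielding at most $M^2$ pieces $A^i_j = A^i \cap (B^i + m_i - m_j)$. You instead decompose $Q$ in one shot into the atoms $Q_{I,J}$ of the Boolean algebra generated by all the $A_i$ and $B_j$, observe that the multiplicity identity $\sum_i \1_{A_i} = \sum_j \1_{B_j}$ a.e.\ kills every atom with $|I| \neq |J|$, and then match indices by an arbitrary bijection on each surviving atom. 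Your route is self-contained (it bypasses \lemref{lemma:subset} and the induction entirely), symmetric in $A$ and $B$, and order-independent; the price is a potentially exponential number of pieces, which is harmless since only finiteness is required. Both constructions produce pieces that are finite Boolean combinations of translates of $A$, $B$ and $Q$, so both inherit Riemann measurability (and the polytope property, when $A$ and $B$ are polytopes) in the same way, and both use only integer translation vectors. One small bookkeeping remark: your pieces cover $A$ and $B$ only up to the null atoms with $|I|\neq|J|$; this matches the paper's own ``up to measure zero'' usage, and if one insists on an exact partition of $A$ these null strata can simply be absorbed into any piece.
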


\begin{proof}
Since $A$ and $B$ are bounded, there exist distinct vectors $m_1, \ldots , m_M \in \bz^d$ such that $A, B \subset \bigcup (Q + m_i)$, where $Q=[0,1)^d$. Define $A^1 := A \cap (Q+m_1)$. We have that
\begin{equation*}
\chi_{A^1} \leq \chi_A = \chi_B  \quad \text{ a.e.},
\end{equation*}
so by Lemma \ref{lemma:subset} there is a subset $B^1 \subset B$ for which $\chi_{A^1} = \chi_{B^1}$ a.e. 
Subtract $\chi_{A^1} = \chi_{B^1}$ from both sides in \eqref{eq:equalproj} to obtain 
\begin{equation*}
\chi_{A \setminus A^1}(x) = \chi_{B \setminus B^1}(x), \quad \text{ a.e. } x \in \bt^d,
\end{equation*}
and repeat this procedure for $A^i := A \cap (Q+m_i)$ with $i=2, \ldots , M$. With each iteration we find a subset $B^i \subset B$ for which $\chi_{B^i} = \chi_{A^i}$ a.e. At the $M$'th step both $A$ and $B$ are exhausted,
and we obtain two partitions $A = \bigcup_1^M A_i$ and $B = \bigcup_1^M B_i$ up to measure zero. Finally we split each $A^i$ further by letting
\begin{equation*}
A_j^i = A^i \cap (B^i + m_i- m_j) , \quad 1 \leq j \leq M .
\end{equation*}
Then (since $\chi_{A^i} = \chi_{B^i}$ a.e.) we obtain
\begin{equation*}
A = \bigcup_{i,j = 1}^M A_j^i \quad \text{ and } \quad B = \bigcup_{i,j=1}^M \left( A_j^i + k_j^i \right) \quad
\end{equation*}
up to measure zero, where $k_j^i := m_j -m_i \in \bz^d$.
\end{proof}

\subsection{Main lemma}
The next lemma is the key step in the proof of Theorem \ref{thm:main2}.

\begin{lemma}
Let $A$ and $B$ be bounded, Riemann measurable sets in $\br^d$ such that 
\begin{equation}
\chi_A(x) - \chi_B(x) = g(x) - g(x-\alpha) \quad \text{ a.e. } x \in \bt^d ,
\label{eq:diffbds}
\end{equation} 
where $g$ is a measurable, non-negative function on $\bt^d$. Assume that 
\begin{equation}
\chi_A(x) \cdot \chi_B(x+k\alpha) = 0 \quad \text{a.e. for } 0 \leq k \leq n-1,
\label{eq:ncondition}
\end{equation}
and let $A' \subset A$ be a Riemann measurable set such that 
\begin{equation}
\chi_{A'}(x) = \min \left\{ \chi_A(x), \chi_{B}(x+n\alpha) \right\}  \quad \text{a.e.} 
\label{eq:aprime}
\end{equation}  
We then have 
\begin{equation}
g'(x) := \sum_{k=0}^{n-1} \chi_{A'}(x-k \alpha)  \leq g(x) \quad \text{ a.e. } 
\label{eq:g1}
\end{equation}
\label{lemma:g1}
\end{lemma}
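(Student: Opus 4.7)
The plan is to iterate the cohomological identity \eqref{eq:diffbds}, use the non-negativity of $g$ to extract an inequality, and then exploit the orthogonality \eqref{eq:ncondition} to choose the optimal number of iterations at each point. Concretely, substituting $x \mapsto x - k\va$ in \eqref{eq:diffbds} and telescoping over $k = 0, \ldots, j-1$ yields, for each $j \in \{0, 1, \ldots, n\}$ and a.e.\ $x$,
\[
g(x) - g(x - j\va) \;=\; \sum_{k=0}^{j-1}\bigl[\chi_A(x - k\va) - \chi_B(x - k\va)\bigr].
\]
Since $g \geq 0$ and only finitely many values of $j$ are involved, intersecting the corresponding full-measure sets gives a single full-measure set on which
\[
g(x) \;\geq\; \sum_{k=0}^{j-1}\chi_A(x - k\va) \;-\; \sum_{k=0}^{j-1}\chi_B(x - k\va)
\]
holds simultaneously for every $j \in \{0, 1, \ldots, n\}$.

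The next step is to choose $j$ optimally as a function of $x$. Writing $a_k = \chi_A(x - k\va)$ and $b_k = \chi_B(x - k\va)$, the hypothesis \eqref{eq:ncondition} (applied with $y = x - k\va$ and shift $m = k - j$) translates to the pointwise orthogonality $a_k\,b_j = 0$ whenever $0 \leq j \leq k \leq n - 1$. This forces the following structure: if $J^{*}(x)$ denotes the smallest index in $\{0, \ldots, n-1\}$ with $b_j > 0$ (and $J^{*}(x) = n$ if no such index exists), then $a_k = 0$ for every $k$ with $J^{*}(x) \leq k \leq n-1$. Consequently, by the minimality of $J^{*}$, we have $\sum_{k=0}^{J^{*}-1} b_k = 0$, and also $\sum_{k=0}^{n-1} a_k = \sum_{k=0}^{J^{*}-1} a_k$.

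Applying the inequality from the first paragraph with $j = J^{*}(x)$ therefore gives $g(x) \geq \sum_{k=0}^{n-1}\chi_A(x - k\va)$. Since $A' \subset A$ implies $\chi_{A'} \leq \chi_A$ pointwise, the right-hand side dominates $\sum_{k=0}^{n-1}\chi_{A'}(x - k\va) = g'(x)$, which is precisely \eqref{eq:g1}. The argument is elementary, so I do not anticipate any significant obstacle; the only point needing mention is that $J^{*}(x)$, being integer-valued with only finitely many possible values, is automatically measurable, making the pointwise selection $j = J^{*}(x)$ legitimate for a.e.\ $x$.
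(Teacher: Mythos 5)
Your proof is correct, and it takes a genuinely different route from the paper's. The paper localizes to $S$, the projection of $A'$ to $\bt^d$, and uses the defining property \eqref{eq:aprime} in an essential way: it shows that the translates $\chi_{A'}(\cdot-k\alpha)$, $0\le k\le n-1$, have a.e.\ disjoint supports, so $g'(x+m\alpha)=\chi_{A'}(x)$ a.e.\ on $S$; that $g(x+m\alpha)=g(x)$ a.e.\ on $S$ for $0\le m\le n-1$ (both $\chi_A$ and $\chi_B$ vanish a.e.\ on $S+k\alpha$, $1\le k\le n-1$); and finally that $\chi_{A'}(x)\le g(x)$ a.e.\ on $S$, which together give \eqref{eq:g1}. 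You instead telescope \eqref{eq:diffbds} backwards and stop at the first index $J^*(x)$ where $\chi_B(x-j\alpha)>0$, using \eqref{eq:ncondition} (correctly transported to the relations $a_kb_j=0$ for $0\le j\le k\le n-1$) to see that all the $A$-mass in the window precedes that index while no $B$-mass does. Your argument uses \eqref{eq:aprime} only through the inclusion $A'\subset A$ and in fact proves the stronger a.e.\ bound $\sum_{k=0}^{n-1}\chi_A(x-k\alpha)\le g(x)$, which is a cleaner statement; the paper's proof, by contrast, exhibits the structural picture (disjointness of the translated supports, constancy of $g$ along orbit segments through $S$) that mirrors how the lemma is deployed in the exhaustion argument of Theorem \ref{thm:main2}, though only the inequality itself is needed there. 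Your closing remark is right but even slightly more than is needed: measurability of $J^*$ plays no role, since one only has to intersect the finitely many full-measure sets on which the telescoped identities for $j=0,\dots,n$ and the orthogonality relations hold, after which the selection $j=J^*(x)$ is a pointwise choice among finitely many valid inequalities.
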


\begin{remark}
Observe that condition \eqref{eq:diffbds} implies (by integration) that $A$ and $B$ have the same measure. Furthermore, Lemma \ref{lemma:subset} ensures that a set $A' \subset A$ satisfying \eqref{eq:aprime} indeed exists. It also ensures the existence of a set $B' \subset B$ for which $\chi_{A'}(x) = \chi_{B'}(x+n\alpha)$ a.e. Notice that the function $g'$ defined in \eqref{eq:g1} is then a transfer function for the difference $\chi_{A'} - \chi_{B'}$, that is, 
\begin{equation*}
\chi_{A'}(x)-\chi_{B'}(x) = g'(x) - g'(x- \alpha) \quad \text{ a.e. } x \in \bt^d .
\end{equation*}
The lemma then says that under the above conditions, this new transfer function is dominated by the given transfer function $g$.
This fact will not enter in the proof of Lemma \ref{lemma:g1}, but it will be used later on, in the proof of Theorem \ref{thm:main2}.
\end{remark}

\begin{proof}[Proof of Lemma \ref{lemma:g1}]
Let $S$ denote the image of $A'$ under the canonical projection $\br^d \rightarrow \bt^d$. Then $\chi_{A'}$ is supported by $S$. Since $g \geq 0$ it is clear from the definition \eqref{eq:g1} of $g'$ that it suffices to show 
\begin{equation}
g'(x+m\alpha) \leq g(x+m\alpha) , \quad \text{ a.e.\ on } S , \quad 0 \leq m \leq n-1. 
\label{eq:whatweshow}
\end{equation}
Fix the integer $m$. First we claim that 
\begin{equation}
g'(x+m\alpha) = \chi_{A'}(x) \quad \text{a.e.\ on } S .
\label{eq:step1}
\end{equation}
This follows from the fact that the functions $\chi_{A'}(x-k\alpha)$, $k=0,1, \ldots , n-1$, have a.e.\ disjoint supports. Indeed, by \eqref{eq:ncondition} and \eqref{eq:aprime}, we have 
\begin{equation*}
\chi_{A'}(x-k\alpha) \cdot \chi_{A'}(x-j\alpha) \leq \chi_A(x-k\alpha) \cdot \chi_B(x+(n-j)\alpha) = 0 \quad \text{a.e.},
\end{equation*}
when $0 \leq k < j \leq n-1$. Using the definition of $g'$ this implies \eqref{eq:step1}.

Secondly, we argue that 
\begin{equation}
g(x+m\alpha)=g(x) \quad \text{a.e.\ on } S . 
\label{eq:step2}
\end{equation}
This is because both terms on the left hand side in \eqref{eq:diffbds} are zero a.e.\ on $S+k\alpha$ for $k=1, \ldots , n-1$. Indeed $\chi_B(x+k\alpha) = 0$ a.e.\ on $S$, as
\begin{equation*}
\chi_{B}(x+k\alpha) \cdot \chi_{A'}(x) \leq \chi_B(x+k\alpha) \cdot \chi_A(x) = 0 \quad \text{a.e. }
\end{equation*}
We also have that $\chi_{A}(x+k\alpha)= 0$ a.e. on $S$, since
\begin{equation*}
\chi_{A}(x+k\alpha) \cdot \chi_{A'}(x) \leq \chi_A(x+k\alpha) \cdot \chi_B(x+n\alpha) = 0 \quad \text{a.e. }
\end{equation*}
From \eqref{eq:diffbds} it then follows that 
\begin{equation*}
g(x+m\alpha) = g(x+(m-1)\alpha) = \cdots = g(x+\va)=g(x) \quad \text{a.e.\ on } S.
\end{equation*}

Lastly, observe that 
\begin{equation}
\chi_{A'}(x) \leq \chi_{A}(x) + g(x-\alpha) = g(x) \quad \text{ a.e.\ on } S, 
\label{eq:step3}
\end{equation}
where the inequality is true since $g \geq 0$, while the equality follows from the cohomological equation \eqref{eq:diffbds}. Combining \eqref{eq:step1}, \eqref{eq:step2} and \eqref{eq:step3} we obtain \eqref{eq:whatweshow}. 
\end{proof}

\subsection{Proof of Theorem \ref{thm:main2}}
Let $A$ and $B$ be two Riemann measurable bounded
remainder sets of the same measure. Then there exists a bounded, measurable function $g$ on $\bt^d$ such that
\begin{equation}
\chi_A(x) - \chi_B(x) = g(x) - g(x-\alpha) \quad \text{ a.e. } x \in \bt^d .
\label{eq:diffbdsref}
\end{equation}
Indeed, $g$ is the difference of the transfer functions for $A$ and $B$. By adding an appropriate constant to $g$, we may assume that $g$ is non-negative. 

We partition $A$ and $B$ simultaneously in the following inductive manner. First let $A^0 \subset A$ be a set with 
\begin{equation*}
\chi_{A^0}(x) = \min \{ \chi_A(x), \chi_B(x) \}  \quad \text{a.e.}
\end{equation*}
Lemma \ref{lemma:subset} ensures the existence of such a set, and also guarantees that there is a subset $B^0 \subset B$ with $\chi_{B^0}(x) = \chi_{A^0}(x)$ a.e. We observe that the difference $\chi_{A \setminus A^0}(x) - \chi_{B \setminus B^0}(x)$ satisfies the cohomological equation \eqref{eq:diffbdsref} with the same function $g(x)$.

We proceed (again by Lemma \ref{lemma:subset}) by finding a set $A^1 \subset A \setminus A^0$ such that
\begin{equation*}
\chi_{A^1}(x) = \min \{ \chi_{A \setminus A^0}(x), \chi_{B \setminus B^0}(x+\alpha) \} \quad \text{a.e.} 
\end{equation*}
Since the sets $A \setminus A^0$ and $B \setminus B^0$ satisfy \eqref{eq:ncondition}
for $n=1$, we conclude from Lemma \ref{lemma:g1} that
\begin{equation*}
g_1(x) := \chi_{A^1}(x) \leq g(x) \quad \text{a.e.}
\end{equation*}
We also find a set $B^1 \subset B \setminus B^0$ for which $\chi_{B^1}(x+ \alpha)=\chi_{A^1}(x)$ a.e., and observe that $g-g_1$ is a non-negative transfer function for the difference $\chi_{A \setminus (A^0 \cup A^1)} - \chi_{B \setminus (B^0 \cup B^1)}$. 

We then continue in the same way. At the $n$'th step in the iteration we have two sets, $A' = A \setminus \cup_{i=0}^{n-1} A^i$ and $B' = B \setminus \cup_{i=0}^{n-1} B^i$, which satisfy the cohomological equation
with non-negative transfer function $g-\sum_{i=1}^{n-1} g_i$. We find a subset $A^n \subset A'$ such that 
\begin{equation*}
\chi_{A^n}(x)=\min \{  \chi_{A'}(x), \chi_{B'}(x+n\alpha) \} \quad \text{a.e.}
\end{equation*}
Since the sets $A'$ and $B'$ satisfy \eqref{eq:ncondition}, we can apply Lemma \ref{lemma:g1} and conclude that 
\begin{equation}
g_n(x) := \sum_{j=0}^{n - 1} \chi_{A^n}(x-j\alpha) 
\label{eq:defgn}
\end{equation}
satisfies $g_n(x) \leq g(x)-\sum_{i=1}^{n-1} g_i(x)$ a.e. 
We also find a subset $B^n \subset B'$ such that
\begin{equation}
\chi_{A^n}(x)=\chi_{B^n}(x+n \alpha) \quad \text{a.e.}
\label{eq:anbn}
\end{equation}

This process yields an infinite sequence of disjoint subsets $A^n$ of $A$ (respectively $B^n$ of $B$).
We claim that in fact, the sets $A^n$ (respectively $B^n$) exhaust all of $A$ (respectively of $B$) up
to measure zero. Indeed, since $\mes A^n = \mes  B^n$, the sets
$U = A \setminus \cup_{n=0}^{\infty} A^n$ and $V = B \setminus \cup_{n=0}^{\infty} B^n$
have the same measure and satisfy
\begin{equation*}
\chi_U(x) \cdot \chi_V(x+k\alpha) = 0 \quad \text{a.e. } \quad k=1,2,3, \ldots 
\end{equation*}
Since the points $ \{ k \va \}$ are dense in $\bt^d$,
this is possible only if $U,V$ have measure zero.

Next we claim that actually only finitely many of the sets $A^n$ can have positive measure. Suppose to the contrary that there are infinitely many $A^n$ for which $\mes A^n >0$. All the sets $A^n$ are Riemann measurable, so each $A^n$ must contain a ball. Since the finite set $\{ j \alpha \}$ ($0 \leq j \leq n-1$) is $\varepsilon$-dense in $\bt^d$ for all sufficiently large $n \geq n(\alpha, \varepsilon)$, and due to the definition \eqref{eq:defgn} of the function $g_n$, we can find an increasing sequence $n_j$ and a sequence of balls $\mathcal{B}_j$, such that $\mathcal{B}_{j+1} \subset \mathcal{B}_{j}$ and $g_{n_j}(x) \geq 1$ a.e.\ on $\mathcal{B}_j$. It follows that 
the sum $\sum_{i=1}^{n} g_i$ admits, with positive measure, arbitrarily large values as $n \to \infty$. 
But $\sum_{i=1}^n g_i(x) \leq g(x)$ a.e, and $g$ is a bounded function, so this is a contradiction.

We thus obtain a finite partition $A = \cup A^n$ a.e.\ and a corresponding partition $B = \cup B^n$ a.e., where $A^n$ and $B^n$ are related by \eqref{eq:anbn}. Finally, by using Lemma \ref{lemma:equalproj}, we partition each $A^n$ further into finitely many sets $A^n_i$, and find
corresponding vectors $k^n_i \in \bz^d$, such that setting $B^n_i := A^n_i + n \alpha + k^n_i$ we have
\begin{equation*}
A = \bigcup_{n,i} A^n_i \quad \text{ and } \quad B = \bigcup_{n,i} B^n_i
\end{equation*}
up to measure zero. This completes the proof of Theorem \ref{thm:main2}.
\qed

\subsection*{Remarks}\label{sec:remarksmain2} 
(i) Observe that we have actually constructed a partition of $A$ which is reassembled to obtain $B$ using only translations by vectors in
$\bz^+ \va + \bz^d$ (where $\bz^+$ is the set of non-negative integers).

(ii) The pieces of the partition are constructed by a finite number of
translations, intersections and differences, starting from the sets $A$, $B$ and the unit cube $Q$.
It follows that if $A$ and $B$ are polytopes, then the pieces of the partition constructed will be polytopes as well.

\subsection{Characterization of Riemann measurable bounded remainder sets}
The characterization given by Corollary \ref{cor:equidecomp} now follows.

\begin{proof}[Proof of Corollary \ref{cor:equidecomp}]
Suppose first that a Riemann measurable set $S \subset \br^d$ is equidecomposable 
to some {\piped} $P$ spanned by vectors in $\bz \va + \bz^d$, using 
translations by vectors belonging to $\bz \va + \bz^d$.
By Theorem \ref{thm:main1}, $P$ is a BRS, so it follows from Proposition \ref{prop:shiftsubset} that $S$ is a BRS as well.

Conversely, suppose that $S$ is a Riemann measurable BRS.
By Proposition \ref{prop:mesgiven} and Corollary \ref{cor:main1anymes}
there exists a {\piped} $P$ of bounded remainder, spanned by vectors belonging to $\bz \va + \bz^d$, with $\mes P = \mes S$. 
Theorem \ref{thm:main2} implies that $S$ is equidecomposable to $P$ using only translations by vectors belonging to $\bz \va + \bz^d$.
\end{proof}

\subsection{Riemann integrability of the transfer function}
\label{sec:Rtransfer}
We continue to show that a Riemann measurable BRS admits a Riemann integrable transfer function,
thus proving Theorem \ref{thm:Rtransfer}.

This relies essentially on the observation that (i) the transfer function for a {\piped} spanned by vectors in $\bz \va + \bz^d$,
constructed in the proof of Theorem \ref{thm:main1}, is Riemann integrable; and (ii) the equidecomposition
of two Riemann measurable bounded remainder sets of the same measure, constructed in the proof of 
Theorem \ref{thm:main2}, consists of Riemann measurable pieces.

\begin{proof}[Proof of Theorem \ref{thm:Rtransfer}]
Let $S$ be a Riemann measurable BRS. By Corollary \ref{cor:equidecomp}, $S$ is equidecomposable (with Riemann
measurable pieces) to some {\piped} $P$ spanned by vectors in $\bz \va + \bz^d$,
using translations by vectors belonging to $\bz \va + \bz^d$. By \thmref{thm:main1specific},
$P$ is a bounded remainder set, and moreover has a Riemann integrable transfer function. 
The proof of Proposition \ref{prop:shiftsubset} then shows that $S$ has a Riemann integrable transfer function
as well (it is the difference of the transfer function for $P$ and the function $\sum g_j(x)$ from the proof
of Proposition \ref{prop:shiftsubset}).
\end{proof}

\subsection{Proof of \thmref{thm:genpiped}}
\label{sec:genpipedproof}
Now we can also give a proof of \thmref{thm:genpiped} formulated in \sect\ref{sec:main1} above. It is based on the following
\begin{lemma}
Let $P$ be a {\piped} spanned by vectors $v_1, \ldots , v_d$, and suppose that for some $j$
we have $v_j \in \bz \va + \bz^d$. Let $P'$ be another {\piped}, spanned by the vectors
$v_1, \ldots , v_{k-1} , v_k + sv_j , v_{k+1}, \ldots , v_d$, where $s \in \br$ and $k \neq j$.
Then $P$ and $P'$ are equidecomposable using only translations by vectors in $\bz \va + \bz^d$.
\label{lem:genpiped}
\end{lemma}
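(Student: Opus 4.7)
My plan is to mimic the classical cut-and-translate proof that a sheared parallelogram is scissors congruent, by translations only, to a rectangle with the same base: I will slice $P$ into finitely many pieces and translate each piece by an integer multiple of $v_j$. Since $v_j \in \bz\va + \bz^d$, every such translation is an allowed one.

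Concretely, I would work in the coordinate system given by the basis $\{v_1,\dots,v_d\}$, writing each point of $\br^d$ uniquely as $\sum_i x_i v_i$. In these coordinates $P = \{x : x_i \in [0,1)\text{ for all }i\}$; rewriting a point $\sum_{i\ne k}u_i v_i + u_k(v_k + s v_j)$ of $P'$ back in the $v$-basis yields coefficient $u_j + s u_k$ in the $v_j$-direction, so
\[
P' = \bigl\{y : y_i \in [0,1) \text{ for } i \ne j,\ y_j - s y_k \in [0,1)\bigr\}.
\]

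Next I would set $n(x) := -\floor{x_j - s x_k}$ and partition $P$ into the level sets $P_n := \{x \in P : n(x) = n\}$. Since $x_j, x_k \in [0,1)$ and $s$ is fixed, only finitely many values of $n$ occur; each $P_n$ is cut out of $P$ by two affine inequalities and is therefore a polytope. For $x \in P_n$ the translate $x + n v_j$ has $v$-coordinates $(x_1,\dots,x_j+n,\dots,x_d)$, and by the definition of $n(x)$ we have $(x_j+n) - s x_k \in [0,1)$, placing $x + n v_j$ in $P'$. Running the calculation in reverse (given $y \in P'$, set $n := \floor{y_j}$ and $x := y - n v_j$) shows that the assignment $x \mapsto x + n(x) v_j$ is a bijection $P \to P'$ off the measure-zero set where $x_j - s x_k \in \bz$ or some $x_i$ is on the boundary of $[0,1)$; hence the translates $\{P_n + n v_j\}$ form a partition of $P'$ up to measure zero, yielding the desired equidecomposition.

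I do not foresee any serious obstacle. The whole argument is essentially a change of variables matching the two parametrizations of $P$ and $P'$ modulo $\bz v_j$, and the only real care needed is the bookkeeping on the measure-zero boundaries between adjacent slabs.
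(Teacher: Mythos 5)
Your proof is correct and is essentially the paper's own cut-and-translate argument: you slice $P$ transversally to the shear direction and translate each slice by an integer multiple of $v_j$, which is an allowed translation since $v_j \in \bz \va + \bz^d$. The only difference is cosmetic: you handle arbitrary $s$ in a single uniform decomposition into the slabs determined by $\floor{x_j - s x_k}$, whereas the paper first treats $0 < s \leq 1$ with a two-piece decomposition and then reaches general $s$ by iterating (transitivity of equidecomposability) and by exchanging the roles of $P$ and $P'$.
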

\begin{proof}
Suppose first that $0 < s \leq 1$. In this case we partition the {\piped} $P$ into two disjoint subsets $S_1$ and $S_2$ defined by
\begin{equation*}
S_1 = \left\{ \sum_{i=1}^d t_i v_i \, : \, 0 \leq t_j < st_k, \;  0 \leq t_i < 1 \; (i \neq j) \right\}
\end{equation*}
and
\begin{equation*}
S_2 = \left\{ \sum_{i=1}^d t_i v_i \, : \, st_k \leq t_j < 1, \; 0 \leq t_i < 1  \; (i \neq j) \right\}.
\end{equation*}
It is easy to verify that the two sets $S_1 + v_j$ and $S_2$ constitute a partition of $P'$.
Since $v_j \in \bz \va + \bz^d$, this proves the claim.

Since equidecomposability (with respect to any group of motions) is an equivalence relation, 
the result can now be extended to the case $s > 1$ by repeating the previous argument several times,
and to the case $s<0$ by exchanging the roles of $P$ and $P'$.
\end{proof}

\begin{proof}[Proof of Theorem \ref{thm:genpiped}]
Let $P_0$ be the {\piped} spanned by the vectors $v_1, \ldots , v_d$.
By iteratively applying Lemma \ref{lem:genpiped} we conclude 
(again, since equidecomposability is an equivalence relation)
that $P$ is equidecomposable to $P_0$
using translations by vectors in $\bz \va + \bz^d$ only. Since $P_0$ is a BRS (by Theorem \ref{thm:main1})
it follows from Proposition \ref{prop:shiftsubset} that $P$ is also a BRS.
\end{proof}


\section{Polytopes of bounded remainder and Hadwiger invariants}
\label{sec:hadwiger}
In this section we use the characterization of Riemann measurable bounded remainder sets to study polytopes of bounded remainder. The approach is based on the concept of \emph{additive invariants}. 

Recall that if $G$ is a group of motions of the space $\br^d$, then a function $\varphi$ defined on the set of all polytopes is said to be an additive $G$-invariant if (i) it is additive, namely if $S_1, S_2$ are two polytopes with disjoint interiors then $\varphi(S_1 \cup S_2) = \varphi(S_1) + \varphi(S_2)$; and (ii) it is invariant under motions of the group $G$, that is $\varphi(S) = \varphi(g(S))$ whenever $S$ is a polytope and $g \in G$. It is clear that a necessary condition for two polytopes $S$ and $S'$ to be $G$-equidecomposable 
is that $\varphi(S) = \varphi(S')$ for any additive $G$-invariant $\varphi$.

Additive invariants with respect to the group of all translations of $\br^d$ were introduced by Hadwiger, see \cite{hadwiger2, hadwigerbook}. It was proved that these invariants form a complete set, in the sense that together they provide a necessary and sufficient condition for two polytopes of the same volume to be equidecomposable by translations. This was shown by Hadwiger and Glur in dimension two \cite{glur}, by Hadwiger in dimension three \cite{hadwiger}, and by Jessen and Thorup \cite{jessen}, and independently Sah \cite{sah}, in any dimension. 

In this section we first define Hadwiger-type invariants with respect to an arbitrary subgroup of all the translations of $\br^d$. 

Using the characterization of Riemann measurable bounded remainder sets, in terms of equidecomposability with the respect to the group of translations by vectors in $\bz \va + \bz^d$, we thus obtain necessary conditions for a polytope in $\br^d$ to be a bounded remainder set (Theorem \ref{thm:hadwigerzero}). 

This is then used to analyze certain specific cases. In particular, we consider finite unions of intervals in one dimension, and convex polytopes in dimensions two and higher.

\subsection{Flags and additive weight functions}
Fix an integer $0 \leq k \leq d-1$, and let
\begin{equation*}
V_k \subset V_{k+1} \subset \cdots \subset V_{d-1} \subset V_d = \br^d
\end{equation*}
be a sequence of affine subspaces such that $V_j$ has dimension $j$ (by an affine subspace we mean a translated linear subspace). Each subspace $V_j$ ($k \leq j \leq d-1$) divides $V_{j+1}$ into two half-spaces; let us call one of them the positive half-space, and the other the negative half-space. Such a sequence, consisting of affine subspaces and positive/negative half-spaces, will be called a $k$-flag, and will be denoted by $\Phi$ (to motivate the name ``flag", imagine a 0-flag in three dimensions). 

Let $S$ be a polytope in $\br^d$. Suppose that $S$ has a sequence of faces 
\begin{equation*}
F_k \subset F_{k+1} \subset \cdots \subset F_{d-1} \subset F_d =S ,
\end{equation*}
where $F_j$ is a $j$-dimensional face contained in $V_j$ for each $j= k, \ldots , d-1$. To each face $F_j$ we associate a coefficient $\varepsilon_j$, where $\varepsilon_j =+1$ if $F_{j+1}$ adjoins $V_j$ from the positive side, and $\varepsilon_j = -1$ if $F_{j+1}$ adjoins $V_j$ from the negative side. We then define the ``weight function"
\begin{equation}
\label{eq:weight}
\omega_{\Phi}(S) = \sum \varepsilon_{k} \varepsilon_{k+1} \cdots \varepsilon_{d-1} \vol_k (F_k) ,
\end{equation}
where the sum runs though all sequences of faces of $S$ with the above mentioned property, and where $\vol_k$ stands for the $k$-dimensional volume. In particular, if no such sequences of faces of $S$ exist, then $\omega_{\Phi}(S) = 0$. Remark that a $0$-dimensional face is simply a vertex $p$ of $S$, and $\vol_0(p)=1$. The function $\omega_{\Phi}$ is then an additive function on the set of all polytopes in $\br^d$ (see e.g.\ \cite[Section 2.6]{sah}). 

It should be mentioned that here, a ``polytope" is not assumed to be convex, nor even connected.
Thus, a polytope may be understood as any finite union of $d$-dimensional simplices with disjoint interiors. 

\subsection{Hadwiger-type invariants}
Let $\Gamma$ be an arbitrary subgroup of $\br^d$. For each $k$-flag $\Phi$ we now define an additive invariant $H_{\Phi}$ with respect to the group of translations by vectors in $\Gamma$. This is done by considering the sum of weights
\begin{equation}
H_{\Phi}(S) = H_{\Phi}(S, \Gamma) = \sum_{\Psi} \omega_{\Psi}(S) , 
\label{eq:hadwiger}
\end{equation}
where $\Psi$ runs through all distinct $k$-flags such that $\Psi = \Phi + \gamma$ for some $\gamma \in \Gamma$. Notice that only finitely many terms in the sum can be nonzero, as the number of nonzero terms is limited by the number of $k$-dimensional faces of $S$. The function $H_{\Phi}$ is easily seen to be an additive invariant, and will be called the Hadwiger invariant associated to $\Phi$. If $\Phi$ is a $k$-flag, then we will say that $H_{\Phi}$ is an invariant of rank $k$. 

Notice that if two $k$-flags $\Phi$ and $\Psi$ correspond to the same sequence of affine subspaces $V_k \subset \cdots \subset V_d = \br^d$, then the Hadwiger invariants $H_{\Phi}$ and $H_{\Psi}$ are either equal, or differ by a factor $-1$. Thus, each sequence of affine subspaces essentially provides one Hadwiger invariant. For this reason we will not always specify the choices of positive and negative half-spaces of the subspaces $V_j$ in what follows. 

If $\Gamma= \br^d$, then the invariants $H_{\Phi}$ are precisely the classical invariants proposed by Hadwiger. The case where $\Gamma$ is a proper subgroup of $\br^d$, however, seems to be less explored. In this case we do not know whether equality of Hadwiger invariants is not just a necessary, but also a sufficient, condition for two polytopes of equal volume to be equidecomposable using translations by vectors in $\Gamma$.

We remark that in the classical case $\Gamma = \br^d$, $0$-rank invariants have not been considered, as they vanish identically and thus do not provide any information. To the contrary, when $\Gamma$ is a proper subgroup of $\br^d$, $0$-flags do provide nontrivial invariants. 

\subsection{Bounded remainder polytopes and Hadwiger invariants}
Now we consider the case when $\Gamma=\bz \va + \bz^d$. Using the characterization of the Riemann measurable bounded remainder sets, we can use Hadwiger invariants to give explicit necessary conditions for a polytope to be a BRS. 

\begin{theorem}
\label{thm:hadwigerzero}
For a polytope $S$ in $\br^d$ to be a bounded remainder set, it is necessary that
\begin{equation*}
H_{\Phi}(S, \, \bz \va + \bz^d)=0
\end{equation*} 
for any $k$-flag $\Phi$ $(0 \leq k \leq d-1)$.
\end{theorem}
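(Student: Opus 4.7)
The plan is to combine \corref{cor:equidecomp} with the definition of the Hadwiger invariants to reduce the statement to an explicit vanishing computation on parallelepipeds.

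First I would invoke \corref{cor:equidecomp} together with the observation (from the remark at the end of the proof of \thmref{thm:main2}) that when the sets involved are polytopes, the equidecomposition can be realized by polytopal pieces. This gives that $S$ is equidecomposable, by polytopal pieces and translations belonging to $\Gamma := \bz\va + \bz^d$, to some \piped{} $P$ spanned by vectors $v_1, \ldots, v_d \in \Gamma$. Because $H_\Phi$ is, by construction, an additive invariant with respect to $\Gamma$-translations of polytopes, it follows that $H_\Phi(S) = H_\Phi(P)$. The problem is therefore reduced to showing that $H_\Phi(P) = 0$ for every $k$-flag $\Phi$ with $0 \leq k \leq d-1$.

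To compute $H_\Phi(P)$, I would parameterize the flags of $P$. Each $k$-flag of $P$ is specified by a $k$-subset $I \subset \{1,\ldots,d\}$ (the ``free'' directions of $F_k$), an ordering $(a_k, a_{k+1}, \ldots, a_{d-1})$ of its complement $J = \{1,\ldots,d\} \setminus I$, and a sign vector $\epsilon \in \{0,1\}^J$, via
\[
F_k(\epsilon) = \Big(\sum_{j \in J} \epsilon_j v_j\Big) + \Big\{\sum_{i \in I} t_i v_i : 0 \leq t_i \leq 1\Big\}.
\]
Since $v_1, \ldots, v_d$ are linearly independent, different pairs $(I, \sigma)$ produce non-parallel affine chains (and hence never contribute to the same translated flag), while the $2^{d-k}$ flags in a single $(I, \sigma)$-class have parallel chains differing by vectors in $\Gamma$. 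Consequently, for each $(I, \sigma)$ either none or all of these $2^{d-k}$ flags arise as $\Gamma$-translates of $\Phi$, and it suffices to verify vanishing of the weighted sum within a single contributing class.

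For that final step I would check the sign bookkeeping carefully. Flipping one coordinate $\epsilon_{a_j}$ reverses the direction in which $F_{j+1}$ extends from $V_j$ (from the $+v_{a_j}$ side to the $-v_{a_j}$ side, or vice versa), and therefore flips exactly the one factor $\varepsilon_j$ in the weight $\prod_{j=k}^{d-1} \varepsilon_j$, leaving $\vol_k(F_k)$ unchanged. Thus within one $(I, \sigma)$-class the contribution to $H_\Phi(P)$ factors as a global sign $\eta = \eta(\Phi, I, \sigma)$ times $\vol_k(F_k)$ times $\sum_{\epsilon \in \{0,1\}^{d-k}} (-1)^{|\epsilon|}$, and the last sum equals $0$ precisely because $d - k \geq 1$, which is the hypothesis on $\Phi$. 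Summing over all $(I, \sigma)$ gives $H_\Phi(P) = 0$, completing the proof.

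The main obstacle is the sign analysis in the last paragraph: one must check rigorously that the $\pm$-orientation attached to each $V_j$ in $\Phi$ interacts with a flip of $\epsilon_{a_j}$ through the single factor $\varepsilon_j$ and no other, and that distinct direction types $(I, \sigma)$ truly never produce the same translated flag. Both points rest on the linear independence of $v_1, \ldots, v_d$, but they require an explicit geometric verification rather than a formal manipulation.
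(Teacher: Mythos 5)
Your proposal is correct and follows essentially the same route as the paper: reduce via Corollary~\ref{cor:equidecomp} (with polytopal pieces) to a \piped{} $P$ spanned by vectors in $\bz\va+\bz^d$, then show $H_{\Phi}(P)=0$ by sign cancellation among face sequences of $P$ differing by translations in $\bz\va+\bz^d$. The only difference is bookkeeping: the paper cancels in pairs by replacing the bottom face $F_k$ with its opposite face inside $F_{k+1}$ (flipping only $\varepsilon_k$), while you sum over all $2^{d-k}$ sign vectors in a direction class; both versions of the cancellation are valid.
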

\begin{proof}
If $S$ is a bounded remainder set, then by Corollary \ref{cor:equidecomp} it is equidecomposable to a {\piped} $P$, spanned by vectors in $\bz \va + \bz^d$, using translations by vectors in $\bz \va + \bz^d$. Hence, $H_{\Phi}(S)=H_{\Phi}(P)$ for any $k$-flag $\Phi$ ($0 \leq k \leq d-1$). We claim that $H_{\Phi}(P) = 0$ for any such $k$-flag. To see this, observe that for any sequence $F_k \subset F_{k+1} \subset \cdots \subset F_d$ of faces of $P$, there is a unique $k$-dimensional face $F_k'$ of $P$ such that $F_k' \subset F_{k+1} \subset \cdots \subset F_d$ and $F_k' = F_k + \gamma$ for some $\gamma \in \bz \va + \bz^d$. Hence, if the sequence of faces containing $F_k$ contributes to $H_{\Phi}(P)$ for some $k$-flag $\Phi$, then the sequence containing $F_k'$ will contribute equally, but with opposite sign, to $H_{\Phi}(P)$. Thus, we must have $H_{\Phi}(P)=0$, and therefore $H_{\Phi}(S)=0$. 
\end{proof}
It might be that the condition given in Theorem \ref{thm:hadwigerzero} is also sufficient for $S$ to be a bounded remainder set. We do not attempt to prove this in general. However, we will see that this is indeed true in certain special cases. 

\subsection{Dimension one. Finite unions of intervals}
Let us first discuss what Theorem \ref{thm:hadwigerzero} says in dimension one. The polytope $S$ is then just a finite union of disjoint intervals $[a_j, b_j]$, and a $0$-flag $\Phi$ is simply a point $p$, dividing $\br$ into a positive and a negative part. The corresponding Hadwiger invariant $H_{\Phi}$ sums up the number of endpoints $a_j$, with one sign, and $b_j$, with the opposite sign, contained in the orbit $\{ p+\gamma \, : \, \gamma \in \bz \va + \bz \}$. Theorem \ref{thm:hadwigerzero} states that $H_{\Phi}(S) = 0$ for any chosen point $p$. Hence, for a finite union of intervals to be a set of bounded remainder it is necessary that any orbit contains an equal number of left and right endpoints. This is a consequence of a result due to Oren \cite[Theorem A]{oren} which shows that the latter condition in fact characterizes the finite unions of intervals with bounded remainder.
\begin{theorem}[Oren]
Let $S \subset \br$ be the union of $N$ disjoint intervals $[ a_j , b_j ]$, $1 \leq j \leq N$. Then $S$ is a bounded remainder set if and only if there exists a permutation $\sigma$ of $\{1, \ldots , N\}$ such that
\begin{equation}
b_{\sigma (j)} - a_j \in \bz \va + \bz  \quad ( 1 \leq j \leq N) .
\label{eq:oren}
\end{equation}
\label{thm:oren}
\end{theorem}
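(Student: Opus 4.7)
\textbf{Proof plan for \thmref{thm:oren}.}
The theorem has two implications. The necessity falls out directly from the Hadwiger-invariant framework developed above, while the sufficiency is proved by writing down an explicit bounded transfer function.

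For necessity, specialize \thmref{thm:hadwigerzero} to $d = 1$ and $k = 0$. A $0$-flag $\Phi$ is just a point $p \in \br$ together with a choice of positive half-line (say the right of $p$), and unwinding \eqref{eq:weight} gives
\[
\omega_\Phi(S) = \#\{j : a_j = p\} - \#\{j : b_j = p\}.
\]
Summing over the orbit $p + (\bz\va + \bz)$ then yields
\[
H_\Phi(S, \bz\va + \bz) = \#\{j : a_j \in p + \bz\va + \bz\} - \#\{j : b_j \in p + \bz\va + \bz\},
\]
which \thmref{thm:hadwigerzero} forces to vanish for every $p$. Hence in each equivalence class of the relation $x \sim y \iff x - y \in \bz\va + \bz$ the $a_j$'s and $b_j$'s appear with equal multiplicity, and any bijection between them class by class assembles into a permutation $\sigma$ satisfying \eqref{eq:oren}.

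For sufficiency, assume such a $\sigma$ is given, and write $b_{\sigma(j)} - a_j = n_j\va + m_j$ with $n_j, m_j \in \bz$. The elementary identity $\chi_{[a,b)}(x) - (b - a) = \{x - b\} - \{x - a\}$ (valid a.e.\ on $\bt$, as one sees by comparing the two sides' $\bz$-periodic structure and jumps), combined with the re-indexing $\sum_j \{x - b_j\} = \sum_j \{x - b_{\sigma(j)}\}$ and the $\bz$-periodicity of $\{\cdot\}$ absorbing $m_j$, gives
\[
\chi_S(x) - \mes S = \sum_{j=1}^N \bigl( \{x - a_j - n_j\va\} - \{x - a_j\} \bigr) \quad \text{a.e.}
\]
Each summand telescopes as $h_j(x) - h_j(x - \va)$, with
\[
h_j(x) = \begin{cases}
 -\sum_{k=0}^{n_j - 1}\{x - a_j - k\va\}, & n_j > 0, \\
 \phantom{-}\sum_{k=1}^{-n_j}\{x - a_j + k\va\}, & n_j < 0, \\
 \phantom{-}0, & n_j = 0.
\end{cases}
\]
Then $g = \sum_j h_j$ is bounded, with $\|g\|_\infty \leq \sum_j |n_j|$, and is a transfer function for $S$; hence $S$ is a BRS by Proposition~\ref{prop:trnsequiv}.

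The step that requires the most care is the bookkeeping in the necessity direction: the weight $\omega_\Phi$ must be read with each interval $[a_j, b_j]$ treated as its own $1$-simplex of the polytope, so that at a point where two intervals abut the two individual contributions still cancel, and the identification of $H_\Phi$ with the signed count above is genuinely correct. Beyond this convention the argument is routine in the framework already set up; an alternative route to sufficiency would be to use the permutation $\sigma$ to equidecompose $S$ (via translations in $\bz\va + \bz$) with a disjoint union of Hecke--Ostrowski intervals and then invoke \thmref{thm:hecke} together with Proposition~\ref{prop:shiftsubset}, but the direct transfer-function construction above is shorter.
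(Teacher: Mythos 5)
Your proposal is correct, and the necessity half is exactly the paper's route: the paper also derives it by specializing \thmref{thm:hadwigerzero} to $0$-flags in dimension one, observing that $H_\Phi(S,\bz\va+\bz)$ counts left endpoints minus right endpoints in the orbit of $p$, and assembling the class-by-class bijections into the permutation $\sigma$. (Your worry about abutting intervals is vacuous here, since disjoint closed intervals cannot share endpoints, but the convention you state is the right one.)

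The sufficiency half is where you diverge mildly from the paper. The paper does not write a transfer function for $S$ directly: it shows that $\sum_j \varphi_j = \1_S$ a.e., where $\varphi_j = \pm\1$ of the interval with endpoints $a_j$ and $b_{\sigma(j)}$, by a jump-and-support uniqueness argument, and then invokes the Hecke--Ostrowski Theorem~\ref{thm:hecke} to conclude that $\1_S$ is a $\pm1$ combination of indicators of bounded remainder intervals. You instead use the a.e.\ identity $\chi_{[a,b)}(x)-(b-a)=\{x-b\}-\{x-a\}$, re-index via $\sigma$, absorb the integer parts by periodicity, and telescope each term $\{x-a_j-n_j\va\}-\{x-a_j\}$ into $h_j(x)-h_j(x-\va)$; the resulting $g=\sum_j h_j$ is a bounded transfer function and Proposition~\ref{prop:trnsequiv} finishes. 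This is essentially an inlining of the fractional-part construction that the paper uses inside its proof of Theorem~\ref{thm:hecke} (together with the reduction step of Proposition~\ref{prop:bversusa}), so the two arguments are morally the same length; what your version buys is a fully explicit transfer function for $S$ with the quantitative bound $\|g\|_\infty\le\sum_j|n_j|$, while the paper's version buys modularity, reusing Theorem~\ref{thm:hecke} as a black box. Your alternative suggestion (equidecompose $S$ to a union of Hecke--Ostrowski intervals by translations in $\bz\va+\bz$ and apply Proposition~\ref{prop:shiftsubset}) would also work and is closer in spirit to the equidecomposability philosophy of Section~\ref{sec:main2}, though it is not what the paper does for this theorem.
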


Thus, the necessity part in this result is a consequence of Theorem \ref{thm:hadwigerzero}. For completeness of the exposition, we also include a proof of the (easier) sufficiency part. 

\begin{proof}[Proof of the sufficiency part in Theorem \ref{thm:oren}]
Suppose that there exists a permutation $\sigma$ satisfying \eqref{eq:oren}. For each $1 \leq j \leq N$, denote by $\varphi_j$ the function $\1_{[a_j, b_{\sigma(j)}]}$ if $a_j < b_{\sigma(j)}$, or $-\1_{[b_{\sigma(j)}, a_j]}$ if $a_j > b_{\sigma(j)}$. The function $\sum_{j=1}^N \varphi_j$ then has the following properties: it has a jump discontinuity of magnitude $+1$ at each $a_j$ and of magnitude $-1$ at each $b_j$, it is constant between these jumps, and it vanishes off the interval $[\min S, \max S]$. This determines $\sum \varphi_j$ uniquely a.e.\ as the indicator function $\1_S$. Hence, $\1_S$ is a finite linear combination (with coefficients $\pm 1$) of indicator functions of bounded remainder intervals, due to \eqref{eq:oren} and the Hecke-Ostrowski Theorem \ref{thm:hecke}. It follows that $S$ must be a BRS.
\end{proof}

\subsection{Dimension two. Convex polygons}
Consider now the case when $S$ is a convex polygon in $\br^2$. If $S$ is a bounded remainder set, then by Theorem \ref{thm:hadwigerzero} we have $H_{\Phi}(S)=0$ for every Hadwiger invariant $H_{\Phi}$ of rank $0$ or $1$. We will see that this condition in fact characterizes the convex polygons of bounded remainder, and that this is an equivalent formulation of Theorem \ref{thm:cvxpg}.

Let us first check that the vanishing of Hadwiger invariants is equivalent to $S$ being centrally symmetric and satisfying conditions \ref{item:facets1} and \ref{item:facets2} in Theorem \ref{thm:cvxpg}.

Let $e$ be an edge of $S$, and consider the $1$-flag $\Phi$ defined by the line $l$ containing $e$. The set $S$ is convex, so there is at most one other edge $e'$ parallel to $e$. The condition $H_{\Phi}(S)=0$ guarantees that there is indeed such an edge $e'$, and the length of $e'$ equals that of $e$. Since this holds for any edge $e$, $S$ must be centrally symmetric. Furthermore, for every pair of parallel edges $e$ and $e'$ there must exist a vector $\gamma \in \bz \va + \bz^2$ such that $e' \subset l + \gamma$, where $l$ is the line containing $e$. 

Now let again $e$ be an edge of $S$, and let $p$ be one of the endpoints of $e$. Consider the $0$-flag $\Phi$ defined by the point $p$ and the line $l$ containing $e$. The condition $H_{\Phi}(S)=0$ implies that if the other endpoint of $e$ does not belong to the orbit $\{p + \gamma \, : \, \gamma \in \bz \va + \bz^2 \}$, then this orbit must contain the unique endpoint $p'$ of $e'$ whose contribution to the sum \eqref{eq:hadwiger} would cancel that of $p$. This is illustrated in Figure \ref{fig:parallel}, and implies condition \ref{item:facets2} in Theorem \ref{thm:cvxpg}. Combined with the fact that $e' \subset l + \gamma$ for some $\gamma \in \bz \va + \bz^2$, this also implies condition \ref{item:facets1}. 
\begin{figure}[htb]
\centering
\begin{tikzpicture}[scale=0.7, p1/.style={black!60!white}, p2/.style={black}, p3/.style={black!50!white}, semithick]
\fill (0,0) circle (0.07);
\draw (-.85,3.85) node[above left]{$p'$};
\fill (-1,4) circle (0.07);
\draw (0,0) node[below]{$p$};
\draw[-latex, p2] (0, 0) -- (3, 0);
\draw (1.5, 0) node[below]{$e$};
\draw[p3] (3,0) -- (4,1);
\draw[p3] (4,1) -- (4.5,2);
\draw[p3] (4.5,2) -- (3.8, 3);
\draw[p3] (3.8,3) -- (2, 4);
\draw[p2] (2, 4) -- (-1,4);
\draw (0.5, 4) node[above]{$e'$};
\draw[p3] (-1,4) -- (-2, 3);
\draw[p3] (-2,3) -- (-2.5, 2);
\draw[p3] (-2.5, 2) -- (-1.8, 1);
\draw[p3] (-1.8,1) -- (0,0);
\draw[-latex, p2] (0,0) -- (-1,4);
\end{tikzpicture}
\caption{A convex polygon $S$ with parallel edges $e$ and $e'$. Condition \ref{item:facets2} says that $e$ or $p'-p$ must lie in $\bz \va + \bz^2$.}
\label{fig:parallel}
\end{figure}

In a similar way one can see that conversely, if $S$ is centrally symmetric and satisfies \ref{item:facets1} and \ref{item:facets2}, then it has vanishing rank $0$ and $1$ Hadwiger invariants. An equivalent formulation of Theorem \ref{thm:cvxpg} is thus the following.

\begin{theorem}
\label{thm:cphadwiger}
Let $S$ be a convex polygon in $\br^2$. Then $S$ is a bounded remainder set if and only if $H_{\Phi}(S, \, \bz \va + \bz^2)=0$ for all $0$- and $1$-flags $\Phi$.
\end{theorem}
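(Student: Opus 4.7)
The necessity direction is immediate from \thmref{thm:hadwigerzero}: any polytope which is a BRS has all its Hadwiger invariants vanishing, and in particular those of rank $0$ and $1$. All the work lies on the sufficiency side.

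For sufficiency, I would first invoke the equivalence already established in the discussion preceding the theorem: the vanishing of every rank-$0$ and rank-$1$ Hadwiger invariant $H_\Phi(S,\bz\va+\bz^2)$ is the same as $S$ being centrally symmetric and satisfying conditions (i), (ii) of \thmref{thm:cvxpg} for every pair of parallel edges. Hence the task reduces to proving: if $S \subset \br^2$ is a centrally symmetric convex polygon satisfying (i) and (ii), then $S$ is a bounded remainder set.

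The plan for this is to construct an equidecomposition of $S$ into bounded remainder pieces using translations by vectors in $\bz\va+\bz^2$, after which \corref{cor:equidecomp} (or, equivalently, the fact that a finite disjoint union of BRS is a BRS combined with \thmref{thm:main1}) gives the conclusion. Since $S$ is a centrally symmetric convex polygon, it is a zonotope, and can be written as a Minkowski sum $S = c + I_1 + \cdots + I_m$ where $c$ is its center and each $I_j = [-e_j/2, e_j/2]$ corresponds to a pair of parallel edges $(e_j,-e_j)$. This representation tiles $S$ by parallelograms $\{P_{jk}\}_{j<k}$, each spanned by a pair of edge vectors. In the special case where all $e_j$ lie in $\bz\va+\bz^2$, each $P_{jk}$ is a BRS by \thmref{thm:main1}, and the conclusion is immediate (this is already \corref{cor:zonotope}).

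The main obstacle is the case where some edges $e_j$ are ``bad'', i.e., $e_j \notin \bz\va+\bz^2$. For such $j$, condition (ii) forces $2(c - m_j) \in \bz\va+\bz^2$ where $m_j$ is the midpoint of $e_j$, and condition (i) gives an additional arithmetic alignment between $e_j$ and its parallel partner. My plan is to use these two pieces of information, in conjunction with \thmref{thm:genpiped} and the auxiliary \lemref{lem:genpiped}, to perform a polygonal surgery on each offending parallelogram $P_{jk}$: split it and translate the pieces by vectors in $\bz\va+\bz^2$ so that the reassembled parallelogram fits the hypothesis of \thmref{thm:genpiped} (namely, one spanning vector in $\bz\va+\bz^2$ and the second differing from a vector in $\bz\va+\bz^2$ by a multiple of the first), and is therefore a BRS. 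The delicate point, which I expect to be the real technical hurdle, is global compatibility: the individual surgeries must be performed coherently across the whole Minkowski tiling of $S$, so that each translation used lies in $\bz\va+\bz^2$ and the pieces fit together into a valid equidecomposition of $S$. This compatibility is precisely what the simultaneous vanishing of all rank-$0$ and rank-$1$ Hadwiger invariants encodes, so the structural plan is to perform the surgery locally on each $P_{jk}$ and then verify the global matching using conditions (i) and (ii).
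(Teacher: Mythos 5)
Your necessity direction, the reduction to central symmetry plus conditions (i)--(ii) of \thmref{thm:cvxpg}, the Minkowski-sum representation of $S$, and the easy case where all edge vectors lie in $\bz \va + \bz^2$ all match what is needed. But the heart of the sufficiency proof --- what to do when some edge vector is \emph{not} in $\bz\va+\bz^2$ --- is left as a plan, and you yourself flag the ``global compatibility'' of the local surgeries as an unresolved hurdle; that is precisely the content that has to be proved. Worse, the plan as stated (repair each parallelogram $P_{jk}$ of the zonotope tiling separately by cutting and translating by vectors of $\bz\va+\bz^2$) cannot work even in principle: equidecomposition by such translations preserves the BRS property (Proposition~\ref{prop:shiftsubset}), hence the measure obstruction of Proposition~\ref{prop:mesgiven}, so a tile $P_{jk}$ can only be turned into a BRS if $\mes P_{jk}$ already lies in $\bz+\bz\alpha_1+\bz\alpha_2$ --- and the hypotheses of the theorem do not guarantee this tile by tile. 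For a concrete counterexample, take the hexagon generated by $v_i=\tfrac12\va+w_i$ with $w_1=(\tfrac12,0)$, $w_2=(\tfrac32,0)$, $w_3=(\tfrac12,1)$: no $v_i$ lies in $\bz\va+\bz^2$, but every pairwise sum $v_i+v_j$ does, so the midpoint conditions hold for each pair of parallel edges and all rank-$0$ and rank-$1$ invariants vanish; yet the tile spanned by $v_1,v_2$ has area $\alpha_2/2\notin\bz+\bz\alpha_1+\bz\alpha_2$ (and similarly for the other two tiles), so no surgery by $\bz\va+\bz^2$-translations can make any single tile a BRS. Any correct argument must regroup material \emph{across} the tiles of your Minkowski tiling.

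This cross-tile regrouping is exactly how the paper's proof goes, and it is genuinely different from your plan: it is an induction on the number $n$ of generating segments. For $n=2$, conditions (i)--(ii) show one spanning vector is in $\bz\va+\bz^2$ and the other is in $\bz\va+\bz^2$ modulo a real multiple of the first, so \thmref{thm:genpiped} applies. For $n>2$ with, say, $v_n\notin\bz\va+\bz^2$, condition (ii) forces $\gamma_1=v_1+\cdots+v_{n-1}\in\bz\va+\bz^2$, and (i)--(ii) applied to the pair of edges parallel to $v_1$ produce a point $\gamma_2\in\bz\va+\bz^2$ on the edge opposite $[0,v_1]$; the polygon is then cut into five pieces, three of which reassemble (by the translations $\gamma_1,\gamma_2$) into the parallelogram spanned by $\gamma_1,\gamma_2$, a BRS by \thmref{thm:main1}, while the remaining two reassemble into a zonogon with $n-1$ generators whose Hadwiger invariants vanish by additivity of $H_\Phi$ together with \thmref{thm:hadwigerzero}, so the induction hypothesis finishes the proof. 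Unless you can supply an argument of this kind (or some other mechanism that moves mass between tiles), your sufficiency proof has a genuine gap.
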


We now turn to the proof of this theorem. 

\begin{proof}[Proof of Theorem \ref{thm:cphadwiger}]
By Theorem \ref{thm:hadwigerzero}, the vanishing of all rank $0$ and $1$ invariants is a necessary condition for $S$ to be a BRS, so only the proof of the sufficiency remains. We therefore suppose that $H_{\Phi}(S) = 0$ for all $0$- and $1$-flags $\Phi$, and prove that $S$ is a BRS. 

As $S$ is a centrally symmetric polygon, it can be represented as the Minkowski sum of several line segments. By translating $S$, we may thus assume that it is of the form 
\begin{equation*}
S = \left\{ \sum_{i=1}^{n} t_i v_i \, : \, 0 \leq t_i < 1 \right\},
\end{equation*}
where the vectors $v_1, \ldots , v_n$ denote $n$ consecutive edges, enumerated in counterclockwise order, among a total of $2n$ edges of $S$. This is illustrated in Figure \ref{fig:decagon}. The proof is done by induction on $n$. 

In the case $n=2$, $S$ is a parallelogram spanned by two vectors $v_1$ and $v_2$. By conditions \ref{item:facets1} and \ref{item:facets2} in Theorem \ref{thm:cvxpg}, one of these vectors, say $v_1$, belongs to $\bz \va + \bz^2$, while the other vector satisfies $v_2 + tv_1 \in \bz \va + \bz^2$ for some $t \in \br$. 
Thus, $S$ is a BRS by Theorem \ref{thm:genpiped}.

Now assume that $n>2$. If all the vectors $v_1, \ldots , v_n$ lie in $\bz \va+ \bz^2$, then $S$ is a BRS by Corollary \ref{cor:zonotope}. So suppose this is not the case, and without loss of generality assume that $v_n \notin \bz \va + \bz^2$. Condition \ref{item:facets2} then implies that the midpoints of the two edges parallel to $v_n$ differ by a vector in $\bz \va + \bz^2$. This means that the vector $\gamma_1$, shown in Figure \ref{fig:decagon}, is in $\bz \va + \bz^2$. Again by \ref{item:facets1} and \ref{item:facets2}, one can find a point on the edge parallel to $v_1$ which differs from the origin by a vector $\gamma_2 \in \bz \va + \bz^2$ (again see Figure \ref{fig:decagon}). 
\begin{figure}[htb]
\centering
\begin{tikzpicture}[scale=0.7, p1/.style={black!60!white}, p2/.style={black}, p3/.style={black!50!white}, semithick]
\fill (0,0) circle (0.07);
\draw (0.1,0.1) node[below left]{\footnotesize 0};
\draw[p3] (0, 0) -- (3, 0);
\draw (1.5, 0) node[below]{$v_1$};
\draw[p3] (3,0) -- (4,1);
\draw (3.5, 0.5) node[below, right]{$v_2$};
\draw[p3] (4,1) -- (4.5,2);
\draw (4.25, 1.5) node[below, right]{$v_3$};
\draw[p3] (4.5,2) -- (3.8, 3);
\draw (4.15, 2.5) node[right]{$\cdots$};
\draw[p3] (3.8,3) -- (2, 4);
\draw (2.9, 3.5) node[above]{$v_n$};
\draw[p3] (2, 4) -- (-1,4);
\draw[p3] (-1,4) -- (-2, 3);
\draw[p3] (-2,3) -- (-2.5, 2);
\draw[p3] (-2.5, 2) -- (-1.8, 1);
\draw[p3] (-1.8,1) -- (0,0);

\draw[-latex, p2] (0,0) -- (3.8,3);
\draw (1.92, 1.48) node[p2, below, right]{$\gamma_1$};
\draw[-latex, p2] (0,0) -- (0.5,4); 
\draw (0.25, 2) node[p2, right]{$\gamma_2$};
\end{tikzpicture}
\caption{The convex polygon $S$ and the vectors $\gamma_1, \gamma_2 \in \bz \va + \bz^2$. \label{fig:decagon}}
\end{figure}

We now partition the polygon $S$ into five subsets $S_1, \ldots , S_5$, as illustrated in Figure \ref{fig:partition}, where the vector $w$ in the figure is defined by $w = v_n - \gamma_2$.
Consider the union $A=S_2 \cup S_3 \cup S_4$. It is equidecomposable (using translations by vectors in $\bz \va + \bz^2$) to the (disjoint) union $P = S_3 \cup (S_2 + \gamma_1) \cup (S_4 + \gamma_2)$. Notice that $P$ is a parallelogram spanned by the vectors $\gamma_1$ and $\gamma_2$, and hence it is a BRS (by Theorem \ref{thm:main1}). By Proposition \ref{prop:shiftsubset} it follows that $A$ is a BRS, which implies by Theorem \ref{thm:hadwigerzero} that $H_{\Phi}(A) = 0$ for any $0$- or $1$-flag $\Phi$. From the additivity of $H_{\Phi}$ it follows that also $H_{\Phi}(B)=0$, where $B =S_1 \cup S_5$. Notice that $B$ is equidecomposable to the disjoint union $S' = (S_1-\gamma_2) \cup (S_5 - \gamma_1)$, a convex polygon which is the Minkowski sum of $n-1$ line segments. We have $H_{\Phi}(S') = H_{\Phi}(B) = 0$, so by the induction hypothesis $S'$ is a BRS. Again by Proposition \ref{prop:shiftsubset} it follows that $B$ is a BRS. The set $S$ is thus a union of two disjoint bounded remainder sets $A$ and $B$. Hence, $S$ is a BRS. 
\begin{figure}[htb]
\centering
\begin{tikzpicture}[scale=1, p2/.style={black!50!white}, p1/.style={black}, p3/.style={red}, semithick]
\draw[p2] (0, 0) -- (3, 0);
\draw[p2] (3,0) -- (4,1);
\draw[p2] (4,1) -- (4.5,2);
\draw[p2] (4.5,2) -- (3.8, 3);
\draw[p2] (3.8,3) -- (2, 4);
\draw[p2] (2, 4) -- (-1,4);
\draw[p2] (-1,4) -- (-2, 3);
\draw[p2] (-2,3) -- (-2.5, 2);
\draw[p2] (-2.5, 2) -- (-1.8, 1);
\draw[p2] (-1.8,1) -- (0,0);

\draw[-latex, p1] (0,0) -- (3.8,3);
\draw (2, 1.4) node[p1, above left ]{$\gamma_1$};
\draw[-latex, p1] (0,0) -- (0.5,4);
 \draw (0.25, 2) node[p1, right]{$\gamma_2$};
\draw[-latex, p1] (3.8,3) -- (1.5, 0);
\draw (2.75, 1.5) node[p1, right, below]{$w$};
\draw[-latex, p1] (0.5, 4) -- (-1.8,1);
\draw (-0.55, 2.5) node[p1, right, below]{$w$};

\draw[p2] (-1.4, 2.6) node{$S_1$};
\draw[p2] (-0.5, 1.4) node{$S_2$};
\draw[p2] (1.4, 2.6) node{$S_3$};
\draw[p2] (1.4, 0.5) node{$S_4$};
\draw[p2] (3.5, 1.4) node{$S_5$};
\end{tikzpicture}
\caption{The partition of the convex polygon $S$.\label{fig:partition}}
\end{figure}
\end{proof}

\subsection{Dimension three and higher}
We have seen that the vanishing of Hadwiger invariants is a necessary and sufficient condition for $S$ to be a BRS in the case when $S$ is a finite union of intervals in dimension one, and when $S$ is a convex polygon in two dimensions. 
In higher dimensions, this condition is still necessary (Theorem \ref{thm:hadwigerzero}), but we do not know if it is sufficient even when the polytope $S$ is convex. 

The condition that $S$ is centrally symmetric, obtained for a convex polygon in $\br^2$, remains a necessary condition for a convex polytope in $\br^d$ to be a BRS, in any dimension $d$. Moreover, also the $(d-1)$-dimensional faces of $S$ must be centrally symmetric. This is the assertion of Theorem \ref{thm:cvxptpsym}, which we shall now prove:

\begin{proof}[Proof of Theorem \ref{thm:cvxptpsym}]
This follows from a result of M\"{u}rner \cite[Section 3.3]{murner}, who showed that for a convex polytope $S$, the vanishing of all the (classical) Hadwiger invariants $H_{\Phi}(S, \, \br^d)$ is equivalent to $S$ being centrally symmetric and having centrally symmetric $(d-1)$-dimensional faces. Observe that the condition $H_{\Phi}(S, \, \Gamma) = 0$ for all $k$-flags $\Phi$, where $\Gamma$ is a proper subgroup of $\br^d$, clearly implies that also $H_{\Phi}(S, \, \br^d) = 0$. If $S$ is a BRS, the former condition holds with $\Gamma = \bz \va + \bz^d$ by Theorem \ref{thm:hadwigerzero}, and so the result follows.  
\end{proof} 

Of course, the condition that $H_{\Phi}(S, \, \bz \va + \bz^d) = 0$ for all $k$-flags $\Phi$, gives more information. For example, if $H$ is the hyperplane containing a $(d-1)$-dimensional face of $S$, then the parallel face must be contained in $H+ \gamma$ for some $\gamma \in \bz \va + \bz^d$.

Recall that we also gave a sufficient condition for a convex polytope to be a BRS,
namely that it is a zonotope with vertices belonging to $\bz \va + \bz^d$ (\corref{cor:zonotope}).
In dimension $d \geq 4$, the class of convex, centrally symmetric polytopes with centrally symmetric $(d-1)$-dimensional faces,
is strictly larger than the class of all zonotopes. However, if $d=3$ then these two classes coincide. We thus obtain
that a convex polyhedron in $\br^3$ with vertices belonging to $\bz \va + \bz^3$
is a BRS if and only if it is a zonohedron, namely it is centrally symmetric and has centrally symmetric faces (\corref{cor:zonohedron}).

We mention another necessary condition which follows from Theorem \ref{thm:hadwigerzero}, and which will be useful later on (here, the polytope $S$ need not be convex). 

\begin{theorem}\label{thm:vertices}
Let $S$ be a polytope in $\br^d$. If $S$ is a bounded remainder set, then any vertex of $S$ differs by a vector in $\bz \va + \bz^d$ from at least one other vertex. 
\end{theorem}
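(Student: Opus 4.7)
The plan is to argue by contradiction, combining Theorem \ref{thm:hadwigerzero} with a judicious choice of a rank-$0$ Hadwiger invariant that isolates a single vertex of $S$. Suppose $v$ is a vertex of $S$ for which no other vertex of $S$ lies in the orbit $v + (\bz\va + \bz^d)$. I aim to construct a $0$-flag $\Phi$ with $V_0 = \{v\}$ such that $H_\Phi(S) \neq 0$, contradicting Theorem \ref{thm:hadwigerzero}.

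First, I would select a maximal chain of faces $\{v\} = F_0 \subsetneq F_1 \subsetneq \cdots \subsetneq F_{d-1} \subsetneq F_d = S$ of $S$ with $\dim F_j = j$. Such a chain exists because $v$ is a vertex: build it from the top down, choosing at each step a face of the previously chosen one that still contains $v$. Setting $V_j := \operatorname{aff}(F_j)$ and designating as positive the half-space of $V_j$ in $V_{j+1}$ that contains $F_{j+1}$ yields a $0$-flag $\Phi$, for which the chain $(F_0, \ldots, F_d)$ contributes $+1$ to the weight $\omega_\Phi(S)$.

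Next, I would examine $H_\Phi(S) = \sum_\Psi \omega_\Psi(S)$, where $\Psi$ runs over the translates $\Phi + \gamma$ with $\gamma \in \bz\va + \bz^d$. Any face-chain of $S$ contributing to $\omega_{\Phi+\gamma}(S)$ begins at the vertex $v + \gamma$, which must therefore be a vertex of $S$. Under the contradiction hypothesis this forces $\gamma = 0$, and hence $H_\Phi(S) = \omega_\Phi(S)$, a sum involving only chains of faces of $S$ that begin at $v$ and whose affine hulls coincide with the $V_j$ at every level.

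The main obstacle is to guarantee $\omega_\Phi(S) \neq 0$: other chains $(v, F_1', \ldots, F_{d-1}', S)$ at $v$ whose affine hulls match the $V_j$ could a priori contribute and cancel our $+1$. To handle this I would pick the chain inductively in a sufficiently generic manner, demanding at each level $j$ that $V_j = \operatorname{aff}(F_j)$ not be the affine hull of any other $j$-face of $S$ containing $F_{j-1}$; when this can be arranged, the chain is the unique contributor to $\omega_\Phi(S)$, so $\omega_\Phi(S) = \pm 1$. In degenerate situations, one can instead pass to a simplicial refinement of $S$ near $v$, or average over a suitable family of flags, to secure the nonvanishing. Once $\omega_\Phi(S) \neq 0$ is in hand, we conclude $H_\Phi(S) \neq 0$, the desired contradiction with Theorem \ref{thm:hadwigerzero}, whence some other vertex of $S$ must differ from $v$ by a vector in $\bz\va + \bz^d$.
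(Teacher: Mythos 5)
Your proposal is essentially the paper's own argument, arranged contrapositively: the paper likewise takes a maximal chain of faces $p=F_0\subset\cdots\subset F_d=S$ at the given vertex, forms the $0$-flag $\Phi$ from the affine hulls of the $F_j$, and invokes Theorem \ref{thm:hadwigerzero}; but it runs the argument directly, observing that $\omega_{\Phi}(S)=\pm 1$ while $H_{\Phi}(S)=0$, so some translated flag $\Phi+\gamma$ with $0\neq\gamma\in\bz\va+\bz^d$ must carry a compatible chain, whose initial face $\{p+\gamma\}$ is the desired second vertex --- no hypothesis on the orbit and no contradiction are needed. The step you single out as the ``main obstacle'' is closed in the paper by a one-line assertion which, for a convex polytope, needs no genericity at all: a $j$-dimensional face $F$ satisfies $F=S\cap\operatorname{aff}(F)$, so the affine subspace $V_j$ contains no other $j$-face and the compatible chain is unique, giving $\omega_{\Phi}(S)=\pm 1$ outright. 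Your two fallback devices would not help if cancellation did occur: $\omega_{\Phi}$ and $H_{\Phi}$ depend only on the set $S$ (by additivity), so a simplicial refinement near $v$ changes nothing, and ``averaging over a family of flags'' has no defined meaning in this framework; the genericity requirement you state is the right idea for non-convex polytopes (where a careless choice of chain can indeed produce cancelling contributions), but your conditional phrasing leaves that case unfinished, at essentially the same point the paper leaves implicit. Apart from this, your reduction --- that under the contradiction hypothesis only $\gamma=0$ contributes, so $H_{\Phi}(S)=\omega_{\Phi}(S)$ --- is correct and matches the paper's mechanism.
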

\begin{proof}
Let $p$ be a vertex of $S$. Choose a sequence $p=F_0 \subset \cdots \subset F_d=S$ of faces of $S$, and let $\Phi$ be a $0$-flag defined by the sequence of affine subspaces $V_0 \subset \cdots \subset V_d$ satisfying $F_j \subset V_j$ ($0 \leq j \leq d$). We then have $\omega_{\Phi}(S) = \pm 1$. Now consider the Hadwiger invariant $H_{\Phi}(S)$. Since $H_{\Phi}(S) = 0$ by Theorem \ref{thm:hadwigerzero}, there is at least one $0$-flag $\Psi = \Phi + \gamma$, where $0 \neq \gamma \in \bz \va + \bz^d$, such that $\omega_{\Psi}(S) = -\omega_{\Phi}(S)$. This means that there exists a sequence of faces $p'=F_0' \subset \cdots \subset F_d'=S$ such that $F_j' \subset V_j + \gamma$ for each $j=0, \ldots , d$. In particular, the two vertices $p$ and $p'$ satisfy $p' = p + \gamma$, and this confirms the claim.
\end{proof}

\subsection{Rectangles with sides parallel to the coordinate axes.}
We conclude this section by demonstrating how another result, due to Liardet \cite[Theorem 3]{liardet}, may also be deduced from Theorem \ref{thm:hadwigerzero}. The result characterizes the bounded remainder
multi-dimensional rectangles with sides parallel to the coordinate axes:

\begin{theorem}[Liardet]
\label{thm:liardetblocks}
If $S \subset \br^d$ is the product of $d$ intervals $I_1 \times \cdots \times I_d$ then $S$ is a bounded remainder set if and only if the length of one of the intervals $I_j$ belongs to $\bz \va_j + \bz$, while the lengths of all the other intervals belong to $\bz$.
\end{theorem}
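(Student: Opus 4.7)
The argument splits naturally into sufficiency and necessity.

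For the sufficiency, suppose one length, say $\ell_1$, belongs to $\bz\alpha_1+\bz$, while $\ell_j\in\bz$ for every $j\geq 2$. After translation we may take $I_j=[0,\ell_j)$. The multiplicity function on $\bt^d$ factors as $\chi_S(x)=\prod_j\chi_{I_j}(x_j)$, and for each $j\geq 2$ the positive integer $\ell_j$ forces $\chi_{I_j}\equiv\ell_j$ a.e.\ on $\bt$. Consequently $\chi_S(x)=C\,\chi_{I_1}(x_1)$ a.e., with $C=\prod_{j\geq 2}\ell_j$, so the $d$-dimensional discrepancy $D_n(S,x)$ equals $C$ times the one-dimensional discrepancy of $I_1$ under rotation by $\alpha_1$. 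The latter is bounded by the Hecke--Ostrowski theorem (\thmref{thm:hecke}), so $S$ is a BRS.

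For the necessity, assume $S$ is a BRS, and let $g$ be a bounded transfer function satisfying the cohomological equation. The key idea in both steps is to integrate out coordinates to pass to lower-dimensional cohomological equations.

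\emph{Step 1: Each $\ell_j$ lies in $\bz\alpha_j+\bz$.} Fix $j$ and integrate the cohomological equation over all $x_k$ with $k\neq j$. Using the factorization of $\chi_S$, the fact that $\int_{\bt}\chi_{I_k}=\ell_k$, and translation-invariance of Lebesgue measure on $\bt^{d-1}$, the identity reduces---after dividing by the positive constant $\prod_{k\neq j}\ell_k$---to a one-dimensional cohomological equation for $I_j$ under rotation by $\alpha_j$, with bounded transfer function $h_j(x_j)=(\prod_{k\neq j}\ell_k)^{-1}\int g\,dx_{\neq j}$. Thus $I_j$ is a one-dimensional BRS with respect to $\alpha_j$, and \propref{prop:mesgiven} (Kesten's theorem in dimension one) gives $\ell_j\in\bz\alpha_j+\bz$.

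\emph{Step 2: At most one $\ell_j$ is non-integer.} Suppose for contradiction that $\ell_i,\ell_j\notin\bz$ for some $i\neq j$. Integrating the cohomological equation over the remaining $d-2$ coordinates in the same manner shows that the rectangle $R=I_i\times I_j$ is a BRS in $\br^2$ with respect to the irrational vector $(\alpha_i,\alpha_j)$. Apply \thmref{thm:cvxpg} to the convex centrally symmetric polygon $R$ and consider the pair of edges perpendicular to the $x_j$-axis; their midpoints differ by $(0,\ell_j)$. If $(0,\ell_j)=q(\alpha_i,\alpha_j)+p$ with $q\in\bz,\,p\in\bz^2$, then $q\alpha_i\in\bz$ forces $q=0$ by irrationality of $\alpha_i$, and hence $\ell_j\in\bz$, contradicting our assumption. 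So the midpoints do not differ by an element of $\bz(\alpha_i,\alpha_j)+\bz^2$, and condition~(ii) of \thmref{thm:cvxpg} requires the edge vector $(\ell_i,0)$ itself to lie in $\bz(\alpha_i,\alpha_j)+\bz^2$; the same irrationality argument applied to the second coordinate then forces $\ell_i\in\bz$, a contradiction. The main obstacle is this Step~2: the elegant move is to reduce to the two-dimensional case by integration and invoke the characterization of bounded remainder convex polygons already established in \thmref{thm:cvxpg}, rather than attempting to rule out two independent $\alpha$-contributions by direct Fourier-analytic bookkeeping.
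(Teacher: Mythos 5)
Your proof is correct, but the necessity argument follows a genuinely different route from the paper's. The paper works directly with the $d$-dimensional box: it applies Theorem \ref{thm:hadwigerzero} to $(d-2)$-flags built from coordinate subspaces to show that at most one side length is non-integral, and then to a $(d-1)$-flag to place that remaining length in $\bz \va_j + \bz$. You instead average the cohomological equation over blocks of coordinates: Fubini and translation invariance show that $\int g\, dx_{\neq j}$ (suitably normalized) is a bounded transfer function for $I_j$ under rotation by $\alpha_j$, so the one-dimensional Kesten statement (Proposition \ref{prop:mesgiven}) gives $\ell_j \in \bz\alpha_j + \bz$ for every $j$; and integrating out all but two coordinates shows $I_i \times I_j$ is a BRS for the irrational vector $(\alpha_i,\alpha_j)$, whereupon the necessity half of \thmref{thm:cvxpg} (condition \ref{item:facets2} applied to the midpoints of the parallel edges, together with the irrationality of $\alpha_i$ and $\alpha_j$) rules out two non-integral lengths. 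This dimension reduction is clean and avoids any flag computation in dimension $d$, at the price of invoking the two-dimensional polygon characterization, which the paper itself proves with the same Hadwiger machinery it uses in its direct proof; so the paper's argument is more self-contained within the toolkit of \sect\ref{sec:hadwiger}, while yours is shorter once \thmref{thm:cvxpg} is granted, and there is no circularity since \thmref{thm:cvxpg} does not depend on the rectangle theorem. Your sufficiency argument (integer-length factors have constant multiplicity function, reducing to Hecke--Ostrowski in one coordinate) is essentially the paper's. Two points you use implicitly and should state: all $\ell_k > 0$ (the paper assumes this too), so the constants you divide by are positive, and each $\alpha_j$, as well as each pair $(\alpha_i,\alpha_j)$, is irrational because $1,\alpha_1,\dots,\alpha_d$ are rationally independent.
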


\begin{proof} 
We assume $d \geq 2$.
The sufficiency of the condition follows easily from the Hecke-Ostrowski Theorem \ref{thm:hecke},
so we move on to prove the necessity. Suppose that $S$ is a BRS. 
By translating $S$ we may assume that $I_j = [0, l_j)$, where $l_j > 0$ for each $j=1, \ldots, d$.

We first show that at least $(d-1)$ of the intervals $I_j$ must be of integer length. This is equivalent to saying that among any two values $l_i$ and $l_j$ $(i \neq j)$, at least one is an integer. As the argument is the same for any pair, we show this for $l_1$ and $l_2$. Let $\Phi$ be a $(d-2)$-flag defined by the subspaces $V_{d-2} = \spann \{ e_3, \ldots, e_d \}$ and $V_{d-1} = \spann \{ e_2, \ldots , e_d \}$, where $ e_1, \ldots , e_d$ denote the standard basis vectors in $\br^d$. Since the two faces
\begin{equation*}
F_{d-2} = \{ 0 \} \times \{ 0\} \times I_3 \times \cdots \times I_d, \qquad 
F_{d-1} = \{ 0 \} \times I_2 \times I_3 \times \cdots \times I_d,
\end{equation*}
are the only pair contributing to the sum \eqref{eq:weight}, then by an appropriate choice of positive-negative half-spaces of the flag $\Phi$ we have $\omega_{\Phi}(S) = \vol_{d-2} (F_{d-2})$. 

Now consider the Hadwiger invariant $H_{\Phi}$, which by Theorem \ref{thm:hadwigerzero} vanishes on $S$. This implies that there is a $(d-2)$-flag $\Psi = \Phi + \gamma$ for some nonzero $\gamma \in \bz \va + \bz^d$, such that $\omega_{\Psi}(S)$ is negative. Hence, there is a pair of faces $F_{d-2}' \subset F_{d-1}'$ of $S$ such that $F_{d-2}' \subset V_{d-2} + \gamma$ and $F_{d-1}' \subset V_{d-1} + \gamma$, and this pair must be one among the following two possible pairs:

(i) $F_{d-2}' = F_{d-2} + l_2e_2$ and $F_{d-1}' = F_{d-1}$. In this case, $F_{d-2} + l_2e_2 \subset V_{d-2} + \gamma$ implies that the vector $\gamma - l_2e_2$ has vanishing first and second entries. The first entry is simply $\gamma_1 \in \bz \va_1 + \bz$, and since $\va_1$ is irrational this implies that $\gamma \in \bz^d$. The second entry is $\gamma_2 - l_2$, and since $\gamma \in \bz^d$ it follows that $l_2 = \gamma_2 \in \bz$.

(ii) $F_{d-2}' = F_{d-2}+l_1e_1$ and $F_{d-1}' = F_{d-1} + l_1 e_1$. In this case, it follows by a similar argument that $l_1 \in \bz$.

Thus we have shown that there is at most one $j$ for which $l_j$ is not an integer.

We now show that if $l_j \notin \bz$ for some $j$, then $l_j$ must belong to $\bz \va_j + \bz$. Again, the argument is the same for any $j$, so consider the case $j=1$. Let $\Phi$ be a $(d-1)$-flag defined by the subspace $V_{d-1} = \spann \{ e_2, \ldots e_d \}$. The face
$F_{d-1} = \{ 0 \} \times I_2 \times I_3 \times \cdots \times I_d$ is the unique one contained in $V_{d-1}$, and we may suppose that $\omega_{\Phi}(S) = \vol_{d-1} (F_{d-1})$. Since $H_{\Phi}(S) = 0$ (Theorem \ref{thm:hadwigerzero}), it follows that the parallel face $F_{d-1}' = F_{d-1} + l_1 e_1$ must satisfy $F_{d-1}' \subset V_{d-1} + \gamma$ for some $\gamma \in \bz \va + \bz^d$. This implies that the vector $\gamma - l_1 e_1$ has vanishing first entry, and therefore $l_1 = \gamma_1 \in \bz \va_1 + \bz$. This completes the proof.
\end{proof}


\section{Linear maps and bounded remainder sets}
\label{sec:linear}

In this section we study relations between bounded remainder sets which 
correspond to two different irrational vectors $\va$ and $\vb$ in $\br^d$.
Specifically, we consider the situation of an invertible linear map $T$ 
on $\br^d$, which maps bounded remainder 
sets with respect to $\va$ to bounded remainder sets with respect to $\vb$.

First we prove \thmref{thm:main3}, which provides a description of
 the linear maps $T$ which map every Riemann measurable BRS with respect to $\va$
to a BRS with respect to $\vb$. We show that this is the case if and only if
$T(\bz \va + \bz^d) \subset \bz \vb + \bz^d.$

In particular, this allows us to characterize the situation when 
every BRS with respect to $\va$ is also a BRS with respect to $\vb$,
by the condition $\va \in \bz \vb + \bz^d$
(\corref{cor:brsab}).

Then we explain how to construct all the irrational vectors $\vb$ and the
 invertible linear maps $T$ satisfying the condition
$T(\bz \va + \bz^d) \subset \bz \vb + \bz^d$ for a given irrational vector $\va$,
and show that
they can be parametrized by the $(d+1)\times (d+1)$ integer matrices with non-zero determinant
(\thmref{thm7.1}).

Finally, we prove by a different argument that $T$ maps every BRS with respect to $\va$
to a BRS with respect to $\vb$ if the stronger condition 
$T(\bz \va + \bz^d) = \bz \vb + \bz^d$ is satisfied (\thmref{thm7.2}).
The advantage of this proof is that it does not rely on the characterization 
of the Riemann measurable bounded remainder sets (i.e.\ \corref{cor:equidecomp}), and hence it is also valid for
bounded remainder sets which are not Riemann measurable.

\subsection{Proof of Theorem \ref{thm:main3}}

 Assume first that $T(\mathbb Z\alpha+\mathbb Z^d)\subset \mathbb Z\beta+\mathbb Z^d.$ We show that if $S$ is a Riemann measurable BRS with respect to $\alpha,$ then $T(S)$ is a BRS with respect to $\beta.$ Indeed, by Corollary \ref{cor:equidecomp}, $S$ is equidecomposable to a parallelepiped $P,$ spanned by vectors in $\mathbb Z\alpha+\mathbb Z^d,$ using translations by vectors in $\mathbb Z\alpha+\mathbb Z^d.$ Thus, $S$ may be partitioned into a finite number of Riemann measurable subsets $S_j$ such that if each $S_j$ is translated by an appropriate vector $\gamma_j\in\mathbb Z\alpha+\mathbb Z^d,$ then the translated sets $S_j+\gamma_j$ form, up to measure zero, a partition of $P.$

 It follows that the set $T(S)$ admits a partition into sets $T(S_j),$ such that the translated sets $T(S_j)+T(\gamma_j)$ form, up to measure zero, a partition of $T(P).$ The set $T(P)$ is a parallelepiped spanned by vectors in $\mathbb Z\beta+\mathbb Z^d,$ and the vectors $T(\gamma_j)$ lie in $\mathbb Z\beta +\mathbb Z^d.$ Hence, again by Corollary \ref{cor:equidecomp}, $T(S)$ is a BRS with respect to $\beta.$
This proves one part of the theorem.

Now we turn to prove the converse part. 
Assume that $T$ maps any Riemann measurable BRS with respect to $\alpha,$ to a BRS with respect to $\beta.$ We show that $T(\mathbb Z\alpha+\mathbb Z^d)\subset \mathbb Z\beta+\mathbb Z^d.$
In the one-dimensional case this follows easily from the Hecke-Ostrowski-Kesten characterization of
the bounded remainder intervals, so we consider the case $d \geq 2$.

 Suppose to the contrary that there is a vector $v\in\mathbb Z\alpha+\mathbb Z^d$ such that $Tv\notin \mathbb Z \beta+\mathbb Z^d.$ 
Choose any system of $d$ linearly independent vectors $v_1,\dots,v_d \in\mathbb Z\alpha+\mathbb Z^d$ with $v_1=v.$
 For each $t\in\mathbb R$, consider a parallelepiped $P_t$ spanned by the vectors $w_1(t),\dots,w_d(t)$ defined by
 $$w_1(t):=v_1,\qquad w_k(t):=v_k+t v_1\qquad (2\le k\le d).$$
 By Theorem \ref{thm:genpiped}, the (non-degenerate) parallelepiped $P_t$ is a BRS with respect to $\alpha.$ Hence its image $T(P_t)$ is a BRS with respect to $\beta.$

 The set $T(P_t)$ is also a parallelepiped, spanned by the vectors $Tw_1(t),\dots, Tw_d(t).$ Hence the vertices of $T(P_t)$ are the vectors of the form
 $$\sum_{j\in J} Tw_j(t),$$
 where $J$ goes through all subsets of $\{1,2,\dots,d\}.$ The vertex which corresponds to the empty subset lies at the origin,
and by Theorem \ref{thm:vertices} it must differ from at least one other vertex by a vector in $\mathbb Z \beta+\mathbb Z^d.$ We conclude that there exists a non-empty subset $J_t$ of $\{1,2,\dots,d\}$ such that 
 $$\sum_{j\in J_t} Tw_j(t) \in \mathbb Z\beta+\mathbb Z^d.$$
Since $T w_1(t) = Tv \notin \mathbb Z\beta+\mathbb Z^d$, we have $J_t \neq \{1\}.$

 Since there is only a finite number of possibilities for the set $J_t,$ there must exist a fixed set $J$ such that $J_t=J$ for all $t$ in some uncountable set $E\subset\mathbb R.$ For such $t$ we have
 $$\sum_{j\in J_t} Tw_j(t)=\sum_{j\in J} Tw_j(t)=a+th,$$
 where $$a=\sum_{j\in J}Tv_j$$
 and $h$ is the non-zero vector given by
$h=|J\setminus\{1\}|\cdot Tv$.
 It follows that $\mathbb Z\beta+\mathbb Z^d$ contains all the vectors of the form
$a+th$, $t\in E$,
 which is an uncountable set of vectors. But $\mathbb Z\beta+\mathbb Z^d$ is a countable set, so this yields the desired contradiction.
This completes the proof of Theorem \ref{thm:main3}.
 \qed

\begin{proof}[Proof of Corollary \ref{cor:brsab}]
By Proposition \ref{prop:bversusa}, the condition $\va \in \bz \vb + \bz^d$ implies that
any BRS with respect to $\va$ is also a BRS with respect to $\vb$. Conversely, using
\thmref{thm:main3} for the identity map shows that the condition $\va \in \bz \vb + \bz^d$
is also necessary.
\end{proof}

\subsection{Parametrization of the pairs $(\beta,T)$}
We now explain how to construct all the irrational vectors $\vb$ and the
 invertible linear maps $T$ on $\mathbb R^d$ satisfying the condition
$T(\bz \va + \bz^d) \subset \bz \vb + \bz^d$ for a given irrational vector $\va$.
We will show that
they can be parametrized by the $(d+1)\times (d+1)$ integer matrices with non-zero determinant.

 Observe that any such $\beta$ and $T$ induce a mapping $U$ on $\mathbb Z^d\times \mathbb Z$ defined by $U(m,n)=(m',n'),$ where $(m',n')$ is the unique element in $\mathbb Z^d\times\mathbb Z$ such that
 \begin{equation}\label{5.2}
 T(n\alpha+m)=n'\beta+m'.
 \end{equation}
 Since the mapping $U$ is additive, there is a $d\times d$ integer matrix $A,$ two vectors $p,q\in\mathbb Z^d$ and a scalar $r\in\mathbb Z,$ such that
  \begin{equation}\label{5.3}
m'=Am+pn,
 \end{equation}
   \begin{equation}\label{5.4}
n'=\langle q,m\rangle+rn.
 \end{equation}
 Thus $U$ may be identified with a $(d+1) \times (d+1)$ matrix with integer entries.

 We claim that $\det U \neq 0$.
Indeed, if not, there would necessarily exist a non-zero vector $(m,n)\in\mathbb Z^d\times\mathbb Z$ which is mapped by $U$ to $(0,0).$ 
By \eqref{5.2} this would imply that $T(n\alpha+m)=0.$
 But since $T$ is invertible and $\alpha$ is irrational, this is possible only if $(m,n)=(0,0),$ a contradiction.

 Next we claim that the map $T$ is given by
   \begin{equation}\label{5.5}
Tx=Ax+\langle q,x\rangle\beta
 \end{equation}
 for every $x\in\mathbb R^d.$ Indeed, it follows from \eqref{5.2}, \eqref{5.3} and \eqref{5.4} that
equality \eqref{5.5} is true for all integer vectors $x$.
 But since two linear maps which agree on integer vectors must agree everywhere, this implies that \eqref{5.5} indeed holds for every $x\in\mathbb R^d.$

It also follows from \eqref{5.2}, \eqref{5.3} and \eqref{5.4} that
$T\alpha=r\beta+p.$
Combining this with \eqref{5.5}, we arrive at the equality
   \begin{equation}\label{5.7}
\beta=\frac{A\alpha-p}{r- \langle q,\alpha\rangle}
 \end{equation}
(observe that the denominator is non-zero, as $\va$ is irrational and $\det U \neq 0$).
To conclude, we have shown that any irrational vector $\beta$ and invertible linear map $T$ satisfying 
$ T(\mathbb Z\alpha+\mathbb Z^d)\subset\mathbb Z\beta+\mathbb Z^d$
 are of the form \eqref{5.7} and \eqref{5.5}.

Now we shall verify that the converse statement is also true. Namely, let $A$ be a $d\times d$ integer matrix, $p,q$ be two vectors in $\mathbb Z^d,$ and $r$ be a scalar in $\mathbb Z$, such that the mapping $U:(m,n)\mapsto(m',n')$ given by \eqref{5.3}, \eqref{5.4} has $\det U \neq 0$. We will show 
that \eqref{5.7} and \eqref{5.5} indeed define an irrational vector $\beta$ 
and an invertible linear map $T$ such that
$T(\mathbb Z\alpha+\mathbb Z^d)\subset\mathbb Z\beta+\mathbb Z^d$.

First we show that the vector $\beta$ defined by \eqref{5.7} is irrational.
If not, there would exist a non-zero $(k,j)\in\mathbb Z^d\times\mathbb Z$ such that
$\langle\beta,k\rangle=j$.
 Using \eqref{5.7} this is equivalent to
 $$\langle\alpha,A^\top k+qj\rangle=\langle p,k\rangle+rj.$$
 Since $\alpha$ is irrational, this implies
 $$A^\top k+qj=0,$$
 $$\langle p,k\rangle+rj=0,$$
 that is, $U^\top (k,j)=0.$ But since we have $\det U \neq 0$, this is not possible unless $(k,j)=(0,0)$.
Hence $\beta$ is irrational.

 Now we let $T$ be the linear map defined by \eqref{5.5}. First observe that $T$ satisfies
\eqref{5.2}, and therefore we have
$T(\mathbb Z\alpha+\mathbb Z^d)\subset\mathbb Z\beta+\mathbb Z^d$.
Let us show that $T$ is invertible. Indeed, since the image of $\mathbb Z^{d+1}$ 
under $U$ contains $(\det U)\cdot\mathbb Z^{d+1},$
it follows from \eqref{5.2} that the image of $T$ contains
 $(\det U)\cdot(\mathbb Z\beta+\mathbb Z^d),$ a dense subset of $\mathbb R^d.$ Hence $T$ must be invertible.

To summarize the above, we have proved the following

 \begin{theorem}\label{thm7.1}
 Let $\alpha$ be an irrational vector in $\mathbb R^d$.
Let $A$ be a $d\times d$ integer matrix, $p,q$ be two vectors in $\mathbb Z^d,$ and $r$ be a scalar in $\mathbb Z$, such that the mapping $U:(m,n)\mapsto(m',n')$ given by \eqref{5.3}, \eqref{5.4} has non-zero determinant. Then \eqref{5.7} and \eqref{5.5} define an irrational vector $\beta$ 
and an invertible linear map $T$ such that
\begin{equation*}
T(\mathbb Z\alpha+\mathbb Z^d)\subset\mathbb Z\beta+\mathbb Z^d.
\end{equation*}
Conversely, any such $\beta$ and $T$ can be obtained in this way.
 \end{theorem}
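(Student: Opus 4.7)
My plan is to establish the bijection between pairs $(\beta,T)$ satisfying the inclusion and quadruples $(A,p,q,r)$ whose associated $(d+1)\times(d+1)$ integer matrix $U$ has non-zero determinant, by producing each side from the other and checking that the formulas are well-posed.

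For the forward direction, starting from a pair $(\beta,T)$ with $T(\mathbb{Z}\alpha+\mathbb{Z}^d)\subset\mathbb{Z}\beta+\mathbb{Z}^d$, I would define $U$ on $\mathbb{Z}^d\times\mathbb{Z}$ by sending $(m,n)$ to the unique $(m',n')$ with $T(n\alpha+m)=n'\beta+m'$; uniqueness is guaranteed by the irrationality of $\beta$. Since $T$ is additive, $U$ is a group homomorphism, hence given by a $(d+1)\times(d+1)$ integer matrix, with block structure \eqref{5.3}, \eqref{5.4} defining $A,p,q,r$. The formula $Tx=Ax+\langle q,x\rangle\beta$ then holds for all $x\in\mathbb{R}^d$ because both sides are $\mathbb{R}$-linear and agree on $\mathbb{Z}^d$ by \eqref{5.3}; substituting $x=\alpha$ and comparing with $T\alpha=r\beta+p$ yields the formula \eqref{5.7} for $\beta$. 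Finally, a non-zero kernel vector $(m,n)$ of $U$ would satisfy $T(n\alpha+m)=0$, which is impossible since $T$ is invertible and $n\alpha+m\neq 0$ whenever $(m,n)\neq 0$ by irrationality of $\alpha$; hence $\det U\neq 0$.

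For the converse direction, I would start from $(A,p,q,r)$ with $\det U\neq 0$ and verify in turn that: (i) the denominator $r-\langle q,\alpha\rangle$ in \eqref{5.7} is non-zero, because otherwise irrationality of $\alpha$ would force $r=0$ and $q=0$, making the last row of $U$ vanish; (ii) the vector $\beta$ defined by \eqref{5.7} is irrational, because a non-trivial integer relation $\langle k,\beta\rangle=j$ would translate, after clearing denominators and applying irrationality of $\alpha$, into the two conditions $A^\top k+qj=0$ and $\langle p,k\rangle+rj=0$, i.e.\ $U^\top(k,j)=0$, contradicting $\det U\neq 0$; (iii) the linear map $T$ defined by \eqref{5.5} is invertible, because the image of $U$ on $\mathbb{Z}^{d+1}$ contains $(\det U)\mathbb{Z}^{d+1}$, so the image of $T$ contains $(\det U)(\mathbb{Z}\beta+\mathbb{Z}^d)$, a dense subset of $\mathbb{R}^d$, forcing the image (a linear subspace) to equal $\mathbb{R}^d$; and (iv) the inclusion $T(\mathbb{Z}\alpha+\mathbb{Z}^d)\subset\mathbb{Z}\beta+\mathbb{Z}^d$ is a direct calculation using \eqref{5.5} together with the identity $T\alpha=r\beta+p$ implicit in \eqref{5.7}.

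The main obstacle I anticipate is the triple role played by the hypothesis $\det U\neq 0$ in the converse direction: it underlies three formally different claims (well-posedness of the denominator, irrationality of $\beta$, and invertibility of $T$), each expressing injectivity of either $U$ or $U^\top$ on a suitable piece of $\mathbb{Z}^{d+1}$. The conceptual unity is that irrationality of $\beta$ is the transposed version of the injectivity used for invertibility of $T$, so a clean presentation requires careful bookkeeping of which integer linear functional is being ruled out in each step.
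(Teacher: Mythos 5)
Your proposal is correct and follows essentially the same route as the paper: the same induced integer map $U$ with block structure \eqref{5.3}--\eqref{5.4}, the same kernel argument for $\det U\neq 0$, the transposed relation $U^\top(k,j)=0$ for irrationality of $\beta$, and the adjugate/density argument giving surjectivity of $T$, with the inclusion verified by the same direct computation. The only cosmetic difference is that you make explicit (via the vanishing last row of $U$) the non-vanishing of the denominator $r-\langle q,\alpha\rangle$, which the paper dispatches in a parenthetical remark.
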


\begin{remark}
In the next section we will need expressions for the determinant of the
maps $U$ and $T$ in terms of the parameters $A,p,q$ and $r$, so we
mention them now.
By the formula for the determinant of a block matrix, we have
\begin{equation}\label{eq:detu}
\det U = r\det A -\langle q,\adj(A)p\rangle
\end{equation}
where $\adj(A)$ is the adjugate matrix of $A$. We also have 
\begin{equation}\label{eq:dett}
\det T=\det A+\langle q,\adj(A)\beta\rangle=\frac{\det U}{r-\langle q,\alpha\rangle}.
\end{equation}
The first equality follows from \eqref{5.5}, for instance by Sylvester's determinant identity.
The second equality is obtained by using the expression \eqref{5.7} for $\beta$, together
with the fact that $\adj(A)\cdot A=(\det A)\cdot I$ (where $I$ is the identity matrix) and
equality \eqref{eq:detu}.
\end{remark}

\subsection{Linear maps and non-Riemann measurable sets}
The proof of \thmref{thm:main3} given above depends on the characterization of the 
Riemann measurable bounded remainder sets (\corref{cor:equidecomp}).
We do not know whether the result can be extended to all
bounded remainder sets, namely also those which are not Riemann measurable.
Nevertheless, we shall now present a different argument which shows that this is true
under the stronger assumption that
$T(\bz \va + \bz^d) = \bz \vb + \bz^d$.

 \begin{theorem}\label{thm7.2}
 Let $\alpha$ and $\beta$ be two irrational vectors in $\mathbb R^d,$ and let $T$ be an invertible linear map on $\mathbb R^d.$
Then the condition
\begin{equation}\label{eq:tabeqcond}
T(\mathbb Z\alpha+\mathbb Z^d)=\mathbb Z\beta+\mathbb Z^d
\end{equation}
is equivalent to the following one: for every bounded, measurable set $S$ in $\mathbb R^d$ (not necessarily Riemann measurable),
$S$ is a bounded remainder set with respect to $\alpha$ if and only if its image $T(S)$ is a bounded remainder set with respect to $\beta$.
 \end{theorem}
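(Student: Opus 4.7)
The plan is to prove the two directions separately.

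The direction that the BRS equivalence implies \eqref{eq:tabeqcond} is immediate from \thmref{thm:main3}: restricting the assumed equivalence to Riemann measurable $S$ yields $T(\bz\va + \bz^d) \subset \bz\vb + \bz^d$, and the same reasoning applied to $T^{-1}$ (which by the equivalence also sends Riemann measurable BRS with respect to $\vb$ to BRS with respect to $\va$) gives the reverse inclusion, hence equality.

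For the opposite direction, assume \eqref{eq:tabeqcond}. Since $T^{-1}(\bz\vb + \bz^d) = \bz\va + \bz^d$ as well, it suffices by symmetry to show $S \in \mathrm{BRS}(\va) \Rightarrow T(S) \in \mathrm{BRS}(\vb)$. Given a bounded transfer function $g$ for $S$ with respect to $\va$, I plan to construct one for $T(S)$ with respect to $\vb$. Using the parametrization of \thmref{thm7.1}, write $T^{-1}\vb = r'\va + p'$ and $T^{-1}k = \ip{q'}{k}\va + A'k$ for $k \in \bz^d$, where $V = U^{-1} \in \gl_{d+1}(\bz)$ (so $|\det V| = 1$). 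Set $h_0(y) := g(T^{-1}y)$, with $g$ extended $\bz^d$-periodically to $\br^d$. The $\bz^d$-periodicity of $g$ and iteration of the cohomological equation give
\[
h_0(y) - h_0(y - \vb) = \sum_{k=0}^{r'-1} \bigl[\chi_S(T^{-1}y - k\va) - \mes S\bigr]
\]
(for $r' > 0$; analogously for $r' < 0$, and $r' \neq 0$ because $V$ is invertible and $\vb$ is irrational). Although bounded, $h_0$ is typically not $\bz^d$-periodic; its integer-translate defect $h_0(y + m) - h_0(y)$ is a Birkhoff sum for $\chi_S - \mes S$ of length $\ip{q'}{m}$, uniformly bounded by $2\norm{g}_\infty$ thanks to the BRS hypothesis on $S$.

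The transfer function will be $h := h_0 + F$ for a bounded measurable correction $F$ satisfying two compatibility conditions: a $\bz^d$-cocycle relation cancelling the translate defect (so that $h$ is $\bz^d$-periodic) and a shift-by-$\vb$ relation that promotes the above formula for $h_0(y) - h_0(y - \vb)$ into the cohomological equation for $T(S)$ with respect to $\vb$. A direct telescoping calculation, using only $\bz^d$-periodicity of $\chi_S$ and the identity $T^{-1}\vb = r'\va + p'$, verifies that these two conditions fit together as a consistent cocycle for the $\bz\vb + \bz^d$-action on $\br^d$. The main obstacle will be producing such a bounded measurable $F$: the plan is first to impose the $\bz^d$-cocycle relation with $F \equiv 0$ on the fundamental domain $[0,1)^d$, which yields a bounded $F$ (with bound at most $2\norm{g}_\infty$) by the BRS estimate above; the residual discrepancy in the shift-by-$\vb$ relation is then, by consistency, a $\bz^d$-periodic function $\rho$ on $\bt^d$, which must be shown to be a $\vb$-coboundary in $L^\infty(\bt^d)$. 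I plan to identify $\rho$ explicitly as a $\chi_S$-Birkhoff sum along the orbit of $T^{-1}\vb = r'\va + p' \in \bz\va + \bz^d$, whence Proposition~\ref{prop:bversusa} reduces the remaining $\vb$-coboundary problem to the bounded transfer function $g$ already at hand.
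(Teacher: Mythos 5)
Your overall skeleton matches the paper's: the implication from the BRS equivalence to \eqref{eq:tabeqcond} via \thmref{thm:main3} applied to $T$ and $T^{-1}$ is fine, and the converse correctly reduces by symmetry to showing that \eqref{eq:tabeqcond} together with $S$ being a BRS for $\va$ implies $T(S)$ is a BRS for $\vb$. The gap is in this main implication. Defining $h$ as the $\bz^d$-periodization of $g\circ T^{-1}$ over $[0,1)^d$ (your $h_0+F$) is legitimate and bounded, but the whole content of the theorem is then concentrated in showing that the residual $\rho(y)=\chi_{T(S)}(y)-\mes T(S)-\bigl[h(y)-h(y-\vb)\bigr]$, a bounded $\bz^d$-periodic function, is a coboundary for the rotation by $\vb$ with bounded transfer; your proposal asserts this rather than proves it, and the two specific claims made about it fail. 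First, the identification of $\rho$ as a $\chi_S$-Birkhoff sum along the orbit of $T^{-1}\vb$ is false in general: take $d=1$, $\va=(\sqrt5-1)/2$ (so $\va^2=1-\va$), $T=$ multiplication by $\va$, $\vb=\va$; then $T(\bz\va+\bz)=\bz\va+\bz$ and $T^{-1}\vb=1$, so $r'=0$ (which also refutes the parenthetical claim that $r'\neq0$), and for $S=[0,\va)$ with $g(x)=-\{x\}$ a direct computation gives $\rho=\1_{[\va^3,\va^2)}-\va^2\,\1_{[\va,1)}$ on $[0,1)$, which is not a Birkhoff sum of $\chi_S$ along the (trivial) orbit of $T^{-1}\vb=1$. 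Second, and more seriously, even if $\rho$ were exhibited as a bounded $\va$-coboundary or as a combination of indicators of bounded remainder sets for $\va$, this would not close the argument: you need a bounded transfer for $\rho$ with respect to the rotation by $\vb$, and Proposition \ref{prop:bversusa} only passes between $\va$ and vectors of $\bz\va+\bz^d$. In the setting of \thmref{thm7.2} one typically has $\vb\notin\bz\va+\bz^d$ (e.g.\ $d=1$, $A=0$, $p=-1$, $q=1$, $r=0$ in \thmref{thm7.1} gives $\det U=1$, $\vb=-1/\va$, $Tx=-x/\va$), and a function can be a bounded coboundary for one irrational rotation but not for another, so the invoked reduction does not apply.

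What is missing is precisely the bookkeeping that compares the lattice count $\chi_{T(S)}(y)=\#\{k\in\bz^d:\,T^{-1}y+T^{-1}k\in S\}$, i.e.\ a count over the subgroup $T^{-1}(\bz^d)$ of $\bz\va+\bz^d$, with $\va$-Birkhoff sums of $\chi_S$. The paper avoids transfer functions for this theorem altogether and does this comparison directly at the level of discrepancy sums: it writes $\nu_M(\vb,T(S),y)$ as the number of $(m',n')$ with $0\le n'\le M-1$ and $y+n'\vb+m'\in T(S)$, transports this through the bijection $U$ to the constraints $0\le rn+\ip{q}{m}\le M-1$, $x+n\va+m\in S$ (with $Tx=y$), and then replaces the slab constraint by $0\le n/|\det T|\le M-1$ at the cost of a bounded error, since $\ip{q}{n\va+m}$ is bounded on the set of solutions; this yields $|\nu_M(\vb,T(S),y)-\nu_N(\va,S,x)|\le C$ with $N\approx|\det T|\,M$, and the BRS property of $S$ finishes the proof. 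Your coboundary-level scheme would need an equivalent of this comparison, and as written it does not contain one.
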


\begin{remark}
Condition \eqref{eq:tabeqcond} means that the mapping $U$ of \thmref{thm7.1}
is a \emph{bijection} over $\mathbb Z^d\times \mathbb Z$, which is the case if and only if
$\det U=\pm1$. Hence the irrational vectors $\beta$ and the invertible linear maps $T$ 
satisfying condition \eqref{eq:tabeqcond} for a given irrational vector $\va$
are parametrized by the $(d+1)\times (d+1)$ integer matrices with determinant $\pm 1$.
\end{remark}

\begin{remark}
Notice that also in \corref{cor:brsab} it was not required that the sets are Riemann measurable,
and that this case is not covered by \thmref{thm7.2}.
\end{remark}

\begin{proof}[Proof of \thmref{thm7.2}]
It will be enough if we prove the following claim: if condition \eqref{eq:tabeqcond} is satisfied,
and if $S$ is a BRS with respect to $\alpha$, then $T(S)$ is a BRS with respect to $\beta.$
Indeed, combining this with \thmref{thm:main3}, and applying the same considerations also
for the inverse map $T^{-1}$ in place of $T$, yields the full assertion of \thmref{thm7.2}.

We thus suppose that \eqref{eq:tabeqcond} holds, and let $S$ be a BRS with respect to $\alpha$. Denote
$$\nu_M(\beta,T(S),y)=\sum_{n'=0}^{M-1}\chi_{T(S)}(y+n'\beta)$$
where $y \in \br^d$, and observe that $\nu_M(\beta,T(S),y)$ is equal to the number of vectors $(m',n')\in\mathbb Z^d\times\mathbb Z$ satisfying
the two constraints
\begin{equation}\label{9.1}
0\le n'\le M-1,
\end{equation}
\begin{equation}\label{9.2}
y+n'\beta+m'\in T(S).
\end{equation}
By condition \eqref{eq:tabeqcond}, the mapping $U:(m,n)\mapsto(m',n')$ 
 is a bijection over $\mathbb Z^d\times \mathbb Z$. 
By \eqref{5.2} and \eqref{5.4}, the two constraints \eqref{9.1} and \eqref{9.2} may thus
be reformulated as
\begin{equation}\label{6.1}
0\le rn+\langle q,m\rangle\le M-1,
\end{equation}
\begin{equation}\label{6.2}
x+n\alpha+m\in S,
\end{equation}
where $x\in\mathbb R^d$ is the point such that $Tx=y.$

According to \eqref{eq:dett} we have
$(\det T)^{-1} (\det U) = r-\langle q,\alpha\rangle,$
and $\det U=\pm1$ since $U$ is a bijection over $\mathbb Z^d\times\mathbb Z$.
Hence another reformulation of the constraint \eqref{6.1} is
\begin{equation*}
0\le \pm\frac{n}{|\det T|}+\langle q,n\alpha+m\rangle\le M-1
\end{equation*}
where the $\pm$ is the sign of $(\det U)\cdot(\det T).$ In what follows, we shall
consider the case when this sign is positive (the other case can be treated similarly).

Assume that $y$ belongs to the unit cube $Q=[0,1)^d.$ Hence $x$ belongs to the bounded set $T^{-1}(Q).$ Since also $S$ is bounded, it follows from \eqref{6.2} that 
$$|\langle q,n\alpha+m\rangle|\le C_1,$$
where $C_1$ is a constant not depending on $M$ or $y$.
Thus, if we consider the following new set of constraints on the vector $(m,n)\in\mathbb Z^d\times \mathbb Z$, namely
\begin{equation}\label{6.4}
0\le\frac{n}{|\det T|}\le M-1,
\end{equation}
\begin{equation}\label{6.4a}
x+n\alpha+m\in S,
\end{equation}
then the number of solutions differs from $\nu_M(\beta,T(S),y)$ by at most some
constant $C_2$. But observe that the number of solutions to \eqref{6.4} and \eqref{6.4a} is just
$$\nu_N(\alpha,S,x)=\sum_{n=0}^{N-1}\chi_S(x+n\alpha),$$
where
\begin{equation}\label{6.5}
N= \big\lfloor|\det T|\cdot(M-1) \big\rfloor+1.
\end{equation}
To summarize, we have proved that
\begin{equation}\label{6.6}
|\nu_M(\beta,T(S),y)-\nu_N(\alpha,S,x)|\le C_2,
\end{equation}
where $N$ is related to $M$ by \eqref{6.5} and where $x$ is related to $y$ by $Tx=y.$

Now we can show that $T(S)$ is a BRS with respect to $\beta.$ Indeed, we have
\begin{align*}
|\nu_M(\beta,T(S),y)-M \mes T(S)| &  \le |\nu_M(\beta,T(S),y)-\nu_N(\alpha,S,x)| \\
  &+|\nu_N(\alpha,S,x)-N \mes S|\\
&+|N  \mes S-M  \mes T(S)|.
\end{align*}
The first summand on the right hand side is bounded by \eqref{6.6}.
The second summand is bounded by some constant $C_3$ for a.e.\ $x,$ since $S$ is a BRS with respect to $\alpha.$
The last summand is equal to 
$$(\mes S)\cdot\Big|N-|\det T|\cdot M\Big|\le C_4,$$
due to \eqref{6.5}. We conclude that
$$|\nu_M(\beta,T(S),y)-M \mes T(S)|\le C$$
for every $M$ and a.e.\ $y\in Q=[0,1)^d$, where the constant $C$ depends neither on $M$ nor on $y$.
 This shows that $T(S)$ is a BRS with respect to $\beta$, and concludes the proof.
\end{proof}


\section{Remarks}
\label{sec:remarks}

\subsection{}
An interesting question which is left open concerns the completeness of Hadwiger invariants with respect to the group of translations by vectors belonging to a general subgroup $\Gamma$ of $\mathbb R^d.$ The condition that $H_\Phi(S,\Gamma)=H_\Phi(S',\Gamma)$ for all $k$-flags $\Phi$ $(0\le k\le d-1)$ is necessary for two polytopes $S$ and $S'$ of the same volume to be equidecomposable using translations by vectors in $\Gamma.$ Is this condition also sufficient?

For our purpose, the case $\Gamma=\mathbb Z\alpha+\mathbb Z^d$ is important. An affirmative answer in this case would imply that the bounded remainder polytopes can be characterized by the condition that $H_\Phi(S, \, \mathbb Z\alpha+\mathbb Z^d)=0$ for any $k$-flag $\Phi$ $(0\le k\le d-1)$.

\subsection{}
Another problem concerns the characterization of bounded remainder sets which are not necessarily Riemann measurable. 
Recall that in Proposition \ref{prop:shiftsubset}, the sets $S$ and $S'$ were not required to be Riemann measurable.
Can one extend Theorem \ref{thm:main2} and prove that any two bounded remainder sets of the same measure
(not necessarily Riemann measurable) are equidecomposable
using translations by vectors in $\bz \va + \bz^d$ only?

An affirmative answer would allow, in particular, to extend Theorem \ref{thm:main3} to all
bounded remainder sets, including those which are not Riemann measurable. We proved this
in the special case when $T(\mathbb Z\alpha+\mathbb Z^d)=\mathbb Z\beta+\mathbb Z^d$
(\thmref{thm7.2}). It is also true if $T$ is the identity map (Corollary \ref{cor:brsab}).

\subsection{}
Let $A$ and $B$ be two bounded, measurable sets in $\mathbb R^d$ of the same measure.
One may ask when the difference in visiting times remains bounded as $n\to\infty$, that is
\begin{equation}\label{8.1}
\Big|
\sum_{k=0}^{n-1}\chi_A(x+k\alpha)-\sum_{k=0}^{n-1}\chi_B(x+k\alpha)
\Big| \leq C
 \quad (n=1,2,3,\dots) 
 \quad \text{a.e.\ $x \in \bt^d$.} 
\end{equation}
In particular, this condition is satisfied if $A$ and $B$ are two bounded remainder sets of the same measure.

The case when $A,B$ are two intervals on $\mathbb R$ was considered
by Furstenberg, Keynes and Shapiro \cite{furstenberg} who
characterized the pairs of intervals with this property:

\emph{Two intervals $A=[a,a+h)$ and $B=[b,b+h)$ satisfy condition \eqref{8.1} if and only if 
$h\in\mathbb Z\alpha+\mathbb Z$ or $b-a \in \mathbb Z\alpha+\mathbb Z.$}

One can see that in our proof of Theorem \ref{thm:main2}, the assumption that $A,B$ are bounded remainder sets of the same measure was merely used as a sufficient condition for \eqref{8.1}. Hence Theorem \ref{thm:main2} remains true under this weaker assumption. Moreover, the converse statement is also true in this case, and can be proved in a similar way to Proposition \ref{prop:shiftsubset}. In other words, we have the following more general version of Theorem \ref{thm:main2}.

\begin{theorem}\label{thm8.1}
Let $A$ and $B$ be two Riemann measurable sets in $\mathbb R^d$.
Then condition \eqref{8.1} is satisfied if and only if $A,B$ are equidecomposable (by Riemann measurable pieces) using translations by vectors in $\mathbb Z\alpha+\mathbb Z^d$.
\end{theorem}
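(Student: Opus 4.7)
My plan is to prove both directions by recycling arguments already in the paper, with the signed function $f=\chi_A-\chi_B$ playing the role of $\chi_S-\mes S$.

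For the direction ``equidecomposability $\Rightarrow$ \eqref{8.1}'', I will follow Proposition \ref{prop:shiftsubset} essentially word for word. Given a partition $A=\bigcup_j S_j$ into Riemann measurable pieces and corresponding $B=\bigcup_j(S_j+\gamma_j)$ (up to measure zero) where $\gamma_j=n_j\va+m_j\in\bz\va+\bz^d$, the bounded function
\[
g_j(x):=\sum_{k=0}^{n_j-1}\chi_{S_j}(x-k\va)
\]
(with the appropriate sign flip when $n_j<0$, and $g_j\equiv0$ when $n_j=0$) satisfies $\chi_{S_j}(x)-\chi_{S_j+\gamma_j}(x)=g_j(x)-g_j(x-\va)$. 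Summing over $j$ produces a bounded transfer function $g=\sum_j g_j$ for $f$, and telescoping the cohomological equation immediately yields \eqref{8.1}.

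For the converse, assume \eqref{8.1} holds. First I observe that $\mes A=\mes B$: dividing \eqref{8.1} by $n$ and invoking the equidistribution \eqref{equid_def}, which holds pointwise for Riemann measurable sets, gives the equality. Next I copy the proof of Proposition \ref{prop:trnsequiv}(i)$\Rightarrow$(ii) verbatim, with $f$ in place of $\chi_S-\mes S$: the function $h(x):=\liminf_n\sum_{k=0}^{n-1}f(x+k\va)$ lies in $L^\infty(\bt^d)$, and yields a bounded measurable $g$ with $\chi_A(x)-\chi_B(x)=g(x)-g(x-\va)$ a.e. After adding a constant I may assume $g\ge 0$.

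With such a $g$ in hand, I re-run the proof of Theorem \ref{thm:main2} without any modification. That proof used the BRS hypothesis on $A$ and $B$ only through the existence of a bounded non-negative $g$ satisfying the cohomological equation for $\chi_A-\chi_B$, together with the Riemann measurability of $A$ and $B$ (to apply Lemmas \ref{lemma:subset} and \ref{lemma:g1} and to keep the pieces Riemann measurable). Both inputs are available here, so the iterative construction produces Riemann measurable pieces $A^n,B^n$ and must terminate after finitely many nontrivial steps (this termination argument rested only on the boundedness of $g$), delivering the desired equidecomposition with translations in $\bz^+\va+\bz^d\subset\bz\va+\bz^d$. I do not foresee a substantive obstacle: the main conceptual point is simply to recognize that the individual BRS hypotheses on $A$ and $B$ were never actually used in Theorem \ref{thm:main2}, only the existence of a bounded transfer function for the difference, which \eqref{8.1} supplies directly.
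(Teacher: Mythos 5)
Your proposal is correct and takes essentially the same route as the paper: the paper's own justification of Theorem \ref{thm8.1} is precisely that the proof of Theorem \ref{thm:main2} used the BRS hypotheses only through condition \eqref{8.1} (i.e.\ through a bounded transfer function for $\chi_A-\chi_B$, obtained by the liminf argument of Proposition \ref{prop:trnsequiv}), and that the converse follows by the argument of Proposition \ref{prop:shiftsubset}. You have merely made explicit the steps the paper leaves implicit (equality of measures via equidistribution, and the construction of $g$), which is fine.
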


If $A,B$ are two polytopes in $\mathbb R^d$ satisfying \eqref{8.1}, then
they are equidecomposable by pieces which are also polytopes. 
It follows that:

\begin{theorem}\label{thm8.2}
For two polytopes $A,B$ in $\mathbb R^d$ to satisfy \eqref{8.1} it is necessary that
$$H_\Phi(A, \, \mathbb Z\alpha+\mathbb Z^d)=H_\Phi(B, \, \mathbb Z\alpha+\mathbb Z^d)$$
for any $k$-flag $\Phi$ $(0\le k\le d-1).$
\end{theorem}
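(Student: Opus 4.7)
The plan is to deduce Theorem~\ref{thm8.2} as an immediate consequence of the polytopal version of Theorem~\ref{thm8.1}, exactly as Theorem~\ref{thm:hadwigerzero} was deduced from Corollary~\ref{cor:equidecomp}. First I would invoke Theorem~\ref{thm8.1} together with the remark just preceding the statement: if two polytopes $A$ and $B$ satisfy \eqref{8.1}, then they are equidecomposable using translations by vectors in $\mathbb{Z}\alpha+\mathbb{Z}^d$ and, moreover, the pieces of this equidecomposition can be chosen to be polytopes. This reduces the theorem to a purely combinatorial statement about additive invariants evaluated on equidecomposable polytopes.

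Next I would simply apply the defining properties of the Hadwiger-type invariants $H_\Phi(\,\cdot\,,\mathbb{Z}\alpha+\mathbb{Z}^d)$. Writing $A=\bigsqcup_{i=1}^N A_i$ and $B=\bigsqcup_{i=1}^N (A_i+\gamma_i)$ (up to measure zero) with $\gamma_i\in\mathbb{Z}\alpha+\mathbb{Z}^d$ and each $A_i$ a polytope, additivity gives
\[
H_\Phi(A,\,\mathbb{Z}\alpha+\mathbb{Z}^d)=\sum_{i=1}^N H_\Phi(A_i,\,\mathbb{Z}\alpha+\mathbb{Z}^d),\qquad H_\Phi(B,\,\mathbb{Z}\alpha+\mathbb{Z}^d)=\sum_{i=1}^N H_\Phi(A_i+\gamma_i,\,\mathbb{Z}\alpha+\mathbb{Z}^d),
\]
for every $k$-flag $\Phi$. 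Since $H_\Phi(\,\cdot\,,\mathbb{Z}\alpha+\mathbb{Z}^d)$ is by construction invariant under translations by vectors in $\mathbb{Z}\alpha+\mathbb{Z}^d$, each corresponding summand agrees, and the two sums are equal. This yields the required identity $H_\Phi(A,\,\mathbb{Z}\alpha+\mathbb{Z}^d)=H_\Phi(B,\,\mathbb{Z}\alpha+\mathbb{Z}^d)$.

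There is no real obstacle here: the entire content of the proof has already been set up earlier in the paper. The only ``step'' one might need to pause over is to verify carefully that the translation-invariance of $H_\Phi$ under the subgroup $\mathbb{Z}\alpha+\mathbb{Z}^d$ is immediate from the definition \eqref{eq:hadwiger}, since translating a polytope by a vector $\gamma\in\mathbb{Z}\alpha+\mathbb{Z}^d$ permutes the flags $\Psi=\Phi+\gamma'$ $(\gamma'\in\mathbb{Z}\alpha+\mathbb{Z}^d)$ that appear in the defining sum, leaving the total unchanged. In short, once Theorem~\ref{thm8.1} and its polytopal refinement are in hand, Theorem~\ref{thm8.2} reduces to invoking the additivity and $(\mathbb{Z}\alpha+\mathbb{Z}^d)$-invariance of the Hadwiger invariants, and no further argument is required.
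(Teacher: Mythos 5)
Your proposal is correct and follows the paper's own route exactly: the paper derives Theorem~\ref{thm8.2} by noting that polytopes satisfying \eqref{8.1} are, by Theorem~\ref{thm8.1} and the polytopal refinement of the equidecomposition, equidecomposable by polytope pieces under translations in $\bz\va+\bz^d$, and then invoking that the $H_\Phi$ are additive, $(\bz\va+\bz^d)$-invariant functionals, just as in the proof of Theorem~\ref{thm:hadwigerzero}. Nothing essential is missing from your argument.
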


In the case when $A,B$ are two intervals on $\mathbb R$ this yields the necessity part in the 
characterization obtained in \cite{furstenberg} (the sufficiency part is easy to prove).

\subsection{}
A bounded, measurable function $f$ on $\mathbb T^d$ is called a \emph{bounded remainder function} if there is a constant 
$C=C(f,\alpha)$ such that 
$$\Big|\sum_{k=0}^{n-1}f(x+k\alpha)-n\int f\Big|\le C 
 \quad (n=1,2,3,\dots) 
 \quad \text{a.e.\ $x \in \bt^d$.} 
$$
Thus a set $S$ is a BRS if and only if $f=\chi_S$ is a bounded remainder function.

Bounded remainder functions have been studied by various authors. For example,
Oren \cite[Theorem A]{oren} characterized the piecewise constant functions of bounded remainder in dimension one. 
Other results may be found in \cite{schoissen} and the references therein.

The case when $f$ is a $\mathbb Z$-valued function
is basically covered by the theory of bounded remainder sets. Indeed, for such $f$ there exists a bounded remainder set $S$ such that
$$f(x)-\int f=\chi_S(x)-\mes S,$$
and if $f$ is Riemann integrable then $S$ may be chosen to be a Riemann measurable set.

On the other hand, some of the basic results on bounded remainder sets extend,
with essentially the same proofs, to general functions of bounded remainder.
In particular, this is so for the equivalence of the bounded remainder property and
the existence of a bounded transfer function. Namely, $f$ is a bounded remainder function
if and only if there exists a bounded, measurable function $g$ on $\bt^d$ satisfying the cohomological equation
\begin{equation*}
f(x) - \int f = g(x) - g(x-\va) \quad \text{a.e.}
\end{equation*}

By a classical theorem of Gottschalk and Hedlund \cite[Theorem 14.11]{gottschalk}
if a bounded remainder function $f$ is continuous, then it admits a continuous transfer function $g$.
What about the analog of Theorem \ref{thm:Rtransfer}\,?
That is, if $f$ is a Riemann integrable function of bounded remainder, does it have a Riemann integrable transfer function?
See \cite[Theorem 1]{schoissen} where such a result is proved for the class of piecewise continuous functions   in dimension one.

\end{document}